\newtheorem{theorem}{Theorem}[section]
\newtheorem{proposition}[theorem]{Proposition}
\newtheorem{lemma}[theorem]{Lemma}
\theoremstyle{remark}
\newtheorem{remark}[theorem]{Remark}
\newtheorem{definition}[theorem]{Definition}
\newtheorem{remarks}[theorem]{Remarks}
\newtheorem{example}[theorem]{Example}
\newcommand\da{\dasharrow}
\newcommand\A{\mathcal{A}}
\newcommand\B{\mathbb{B}}
\newcommand\be{\begin{equation}\label}
\newcommand\ee{\end{equation}}
\newcommand\M{\mathcal{M}}
\renewcommand\L{\mathcal{L}}
\newcommand{\K}{\mathbb{K}}
\renewcommand{\O}{\on{O}}
\newcommand{\Co}{\mathcal{C}}
\newcommand{\U}{\on{U}}
\newcommand{\F}{\mathcal{F}}
\newcommand{\E}{\mathcal{E}}
\newcommand{\R}{\mathbb{R}}
\newcommand{\C}{\mathbb{C}}
\newcommand{\Z}{\mathbb{Z}}
\newcommand{\pr}{\on{pr}}
\newcommand{\Cl}{{\C \on{l}}}
\newcommand\lie[1]{\mathfrak{#1}}
\newcommand{\g}{\lie{g}}
\renewcommand{\t}{\lie{t}}
\newcommand{\Alc}{\lie{A}}
\newcommand{\on}{\operatorname}
\newcommand{\Ad}{ \on{Ad} }
\newcommand{\End}{ \on{End} } 
\newcommand{\id}{\on{id}}
\renewcommand{\ker}{ \on{ker}} 
 \newcommand{\Spin}{ \on{Spin}}
\newcommand{\SU}{ \on{SU}} 
\newcommand{\SO}{ \on{SO}}
\newcommand{\Mult}{ \on{Mult}}
\newcommand{\cone}{ \on{cone} }
\newcommand{\D}{ \mathcal{D} }
\newcommand\dirac{/\kern-1.2ex\partial} % Dirac operator
\newcommand\qu{/\kern-.7ex/} % Categorical quotients
\newcommand{\fus}{\circledast}  % Level
\newcommand{\Waff}{W^{\on{aff}}} % affine Weylgroup 
\newcommand{\lra}{\longrightarrow}
\newcommand{\hra}{\hookrightarrow}
\newcommand{\xra}{\xrightarrow}
\renewcommand{\d}{{\mbox{d}}}
\newcommand{\ol}{\overline}
\newcommand\Phinv{\Phi^{-1}}
\newcommand\Sig{\Sigma}
\newcommand\sig{\sigma}
\newcommand\Om{\Omega}
\newcommand\om{\omega}
\newcommand{\f}{\frac}
\renewcommand{\H}{\ca{H}}
\renewcommand{\l}{\langle}
\renewcommand{\r}{\rangle}
\newcommand{\hh}{{\textstyle \f{1}{2}}}
\newcommand\pt{\on{pt}}
\newcommand\vol{\on{vol}}
\newcommand\Ch{\on{Ch}}
\newcommand\beqn{\begin{equation}}      
\newcommand\eeqn{\end{equation}}      
\newcommand{\ca}{\mathcal}
\newcommand{\wh}{\widehat}
\newcommand{\wt}{\widetilde}
\newcommand{\mf}{\mathfrak}
\newcommand{\beq}{\begin{eqnarray*}}
\newcommand{\eeq}{\end{eqnarray*}}
\newcommand{\cox}{\mathsf{h}^\vee}
\newcommand{\red}{{\on{red}}}
\newcommand{\reg}{{\on{reg}}}
\newcommand{\op}{{\on{op}}}
\begin{document}

\title[]{Twisted $K$-homology and group-valued moment maps}

\vskip.2in
\author{E. Meinrenken}
%\address{University of Toronto, Department of Mathematics,
%40 St George Street, Toronto, Ontario M4S2E4, Canada }
%\email{mein@math.toronto.edu}
%
\date{\today}
\begin{abstract}
Let $G$ be a compact, simply connected Lie group. We develop a `quantization functor' from 
pre-quantized quasi-Hamiltonian $G$-spaces $(M,\omega,\Phi)$ at level $k$ to the fusion ring (Verlinde algebra)
$R_k(G)$. The quantization $\ca{Q}(M)\in R_k(G)$ is defined as a push-forward in twisted equivariant 
$K$-homology. It may be computed by a fixed point formula, similar to the equivariant index theorem for 
$\Spin_c$-Dirac operators. Using the formula, we calculate $\ca{Q}(M)$ in several examples. 
\end{abstract}
\maketitle 
\setcounter{tocdepth}{2}
%%%%%%%%%%%%%%%%%%%%%%%%%%%%%%%%%%%%%%%%%%%%%%%%%%%%%%%%%%%%%%%%%%%%%%%%%%%%%%%%%
%%%%%%%%%%%%%%%%%%%%%%%%%%%%%%%%%%%%%%%%%%%%%%%%%%%%%%%%%%%%%%%%%%%%%%%%%%%%%%%%%

\section{Introduction}
Let $G$ be a compact Lie group, and $M$ a compact
symplectic $G$-manifold. The symplectic form $\om_0$ determines 
an equivariant $\Spin_c$-structure, with corresponding Dirac operator $\dirac$. If the action is
Hamiltonian, with moment map $\Phi_0$, and $L\to M$ is an equivariant
pre-quantum line bundle (where the lift of the $G$-action is
determined by $\Phi_0$), one can consider the Dirac operator $\dirac_L$ with coefficients in $L$. 
The equivariant index 
\[ \ca{Q}(M):=\on{index}_G(\dirac_L)\in R(G),\]
will be called the \emph{quantization} of the Hamiltonian $G$-space $(M,\om_0,\Phi_0)$, following \cite{gu:re}. The quantization has a
number of nice properties. 
\begin{enumerate}
\item[(i)] For products, $\ca{Q}(M_1\times
M_2)=\ca{Q}(M_1)\ca{Q}(M_2)$.
\item[(ii)] Let $M^*$ denote $M$ with
the opposite symplectic structure $\om_0^*=-\om_0$ and moment map
$\Phi_0^*=-\Phi_0$. Then $\ca{Q}(M^*)=\ca{Q}(M)^*$. \item[(iii)] If $\mathcal
O\subset \g^*$ is an integral coadjoint orbit, then $\ca{Q}(\mathcal
O)$ is the character of the irreducible representation labeled by
$\mathcal O$. \item[(iv)] \emph{Quantization commutes with
reduction}: The multiplicity of the trivial representation in
$\ca{Q}(M)$ equals $\ca{Q}(M\qu G)$, the quantization of the
symplectic quotient.  (See \cite{me:si} for the precise statement for singular quotients.)
\end{enumerate}
 For any $g\in G$, the Atiyah-Segal-Singer theorem gives a
formula for $\ca{Q}(M)(g)$ as a sum of integrals over the fixed point
manifolds of $g$.

In this paper we will develop an analogous quantization for
\emph{quasi-Hamiltonian $G$-spaces} $(M,\om,\Phi)$. 
The moment map
\[\Phi\colon M\to G\] 
of a q-Hamiltonian $G$-space takes values in the group itself rather
than in the dual of the Lie algebra. The moment map condition for an 
ordinary Hamiltonian $G$-space, together with the condition that $\om$ be closed, are 
replaced by the property (using the Cartan model of equivariant de Rham theory, cf.~ Appendix \ref{app:eq}),  
\begin{equation}\label{eq:condi}
 \d_G\om=-\Phi^*\eta_G.
 \end{equation}
Here $\eta_G\in\Om^3_G(G)$ is the equivariant Cartan 3-form determined by an invariant inner product on the Lie algebra $\g$. The
condition that $\om$ be non-degenerate is replaced by the weaker
condition that $\ker(\om)\cap \ker(\d\Phi)=0$ everywhere. 
Products, conjugates, and reductions are defined similar to the
Hamiltonian case. Basic examples of a q-Hamiltonian space are 
conjugacy classes $\mathcal C\subset G$, with moment map the inclusion.  Another 
important example is $M=G^{2h}$ with moment map
$\Phi(a_1,b_1,\ldots,a_h,b_h)=\prod_{i=1}^h a_i b_i a_i^{-1} b_i^{-1}$
(and a certain complicated formula for $\om$): the symplectic quotient
$G^{2h}\qu G$ is the moduli space of flat connections on a surface of
genus $h$.

Many of the results of this paper are developed under the assumption that $G$ is 
compact, connected, and semi-simple. For the purposes of this introduction let us also assume that 
$G$ is simple and simply connected. 
For any non-negative integer $k$, let $R_k(G)$ be the \emph{level $k$ fusion ring} (Verlinde algebra) of $G$. It is a quotient of the representation ring $R(G)$ by the level $k$ fusion ideal $I_k(G)$, defined as follows. 
Fix a maximal torus $T$ and a fundamental Weyl chamber for $G$. Use the basic inner product on $\g$ to identify the Lie algebra with its dual. Let $\Lambda^*_k$ be the set of \emph{level $k$ weights}, i.e.~ weights $\lambda\in \t^*=\t$ such that $\f{1}{k}\lambda$ is in the fundamental Weyl alcove. Let $\cox$ be the dual Coxeter number and $\rho$ the half-sum of positive roots. Then $I_k(G)$ is the ideal of characters vanishing at all points
\begin{equation}\label{eq:tlambda1}
 t_\lambda=\exp({\textstyle \f{\lambda+\rho}{k+\cox}}),\ \ \lambda\in\Lambda^*_k.\end{equation}
(For details, see Appendix \ref{app:fus}.)

%Then the integral cohomology of $G$ vanishes in degree
%$1,2$, and is equal to $\Z$ in degree $3$. 
%
The q-Hamiltonian $G$-space $(M,\om,\Phi)$ is \emph{pre-quantizable}
if the class in relative cohomology $H^3_G(\Phi)$ defined by the pair $(\om,\eta_G)$ is integral.  
In particular, $\eta_G$ defines a class of $H^3_G(G,\Z)=\Z$; the integer $k$ defined by this 
isomorphism is the \emph{level}. We will work with the geometric realization of integral degree $3$ 
cohomology classes in terms of \emph{Dixmier-Douady bundles}.  (See Section \ref{sec:dd} for background on Dixmier-Douady theory and twisted $K$-homology.) Let $\A\to G$ be a $G$-equivariant Dixmier-Douady bundle whose Dixmier-Douady class is 
a generator of $H^3_G(G,\Z)=\Z$, and let $\A^k$ be its $k$-th power. Then the equation 
\eqref{eq:condi} is pre-quantized to a $G$-equivariant Morita trivialization 
\[ (\Phi,\E)\colon (M,\C)\da (G,\A^k);\]
this morphism is the counterpart to a pre-quantum line bundle. In \cite{al:ddd}, we obtained a 
distinguished $G$-equivariant Morita morphism 
\[ (\Phi,{\ca{S}})\colon (M,\Cl(TM))\da (G,\A^{\cox})\]
from the Clifford bundle of $M$. This morphism is the counterpart of the $\Spin_c$-structure of a symplectic manifold; 
indeed the spinor module for the $\Spin_c$-structure can be regarded as a Morita trivialization $(M,\Cl(TM))\da 
(\pt,\C)$. Tensoring,  one obtains a Morita morphism 
$(\Phi,{\ca{S}}\otimes\E)\colon (M,\Cl(TM))\da (G,\A^{k+\cox})$, and a resulting 
push-forward in twisted equivariant $K$-homology
\[ \Phi_*\colon K_0^G(M,\Cl(TM))\to K_0^G(G,\A^{k+\cox}).\]
A theorem of Freed-Hopkins-Teleman \cite{fr:lo1} identifies the $K$-homology group on the right hand side 
with $R_k(G)$. The $K$-homology group on the left hand side contains a distinguished element $[M]$, the 
\emph{fundamental class} of $M$. We define the quantization to be its push-forward,
\[ \ca{Q}(M):=\Phi_*[M]\in R_k(G).\] 
We will find that the properties of this quantization procedure are similar to those 
for ordinary Hamiltonian spaces: 
\begin{enumerate}
\item[(i)] For fusion products, $\ca{Q}(M_1\times
M_2)=\ca{Q}(M_1)\ca{Q}(M_2)$. 
\item[(ii)] Let $M^*$ denote $M$ with
the opposite 2-form $\om^*=-\om$ and moment map
$\Phi^*=\Phi^{-1}$. Then $\ca{Q}(M^*)=\ca{Q}(M)^*$. 
\item[(iii)] If $\Co\subset G$ is a conjugacy class with a level $k$ pre-quantization, 
then $\ca{Q}(\Co)$ is the character of the basis element of $R_k(G)$ 
labeled by $\Co$. \item[(iv)] \emph{Quantization commutes with
reduction}: The multiplicity of the trivial representation in
$\ca{Q}(M)$ equals $\ca{Q}(M\qu G)$, the quantization of the
symplectic quotient.
\end{enumerate}
$\ca{Q}(M)$ may be computed by localization: Let $t=t_\lambda$ be one of the elements
\eqref{eq:tlambda1}. The map $R(G)\to \C$ given by the evaluation at $t$ descends to $R_k(G)$, and
hence the number $\ca{Q}(M)(t)\in\C$ is defined. By equivariance of $\Phi$, and since $t$ is regular, the fixed
point set $M^t$ maps to $G^t=T$. It turns out that the restriction 
of $\A^{k+\cox}$ to $T$ is $\langle t\rangle$-equivariantly Morita trivial, where $\langle t\rangle$ is the 
finite subgroup generated by $t$. Once we fix such a Morita trivialization, 
one obtains a $\langle t\rangle$-equivariant Morita trivialization of 
$\Cl(TM)$ along $M^t$. Equivalently, the restriction of $TM$ to the fixed point set acquires
a $\langle t\rangle$-equivariant $\Spin_c$-structure. We will find that $\ca{Q}(M)(t)$ is given by
the usual Atiyah-Segal-Singer fixed point formula, as a sum over the components of the fixed point set,
\[ \ca{Q}(M)(t)=\sum_{F\subset M^t}\int_F \f{\wh{A}(F)\on{Ch}(\ca{L}_F,t)^{1/2}}{D_\R(\nu_F,t)}\]
using the $\Spin_c$-structure along the fixed point sets $F\subset M^t$. Thus, even though our quantization procedure does not 
involve a globally defined Dirac operator, the localization formula has the appearance of an equivariant index 
formula. 

In an earlier paper \cite{al:fi}, the fixed point formula was implicitly used as a `\emph{definition}' of 
the quantization of a q-Hamiltonian space, motivated by formal computations for a Hamiltonian loop group space 
associated to $(M,\om,\Phi)$. See also Carey-Wang \cite{car:fu} for a similar approach. 
The presentation of $\ca{Q}(M)$ as a $K$-homology push-forward is more satisfying conceptually: For instance, it is not obvious that the right hand side of the fixed point formula defines an element of $R_k(G)$ (rather than just 
$R_k(G)\otimes\C$). 
 
An application of our theory to a product $M=D(G)^{h}\times\Co_1\cdots\cdots \times\Co_r$ of $h$ copies of the `double'
$D(G)=G\times G$ (with moment map the group commutator) with $r$ pre-quantized conjugacy classes gives the 
Verlinde formulas for the moduli space of flat connections on a surface of genus $h$ with $r$ boundary 
components. In a forthcoming paper, we will use the formula to obtain Verlinde-type formulas for 
non-simply connected groups. 
% As a preview towards these general results, we will work out the quantization $\ca{Q}(D(\on{PU}(n)))$ of 
% the double of   $\on{PU}(n)=\SU(n)/\Z_n$, viewed as a q-Hamiltonian $\SU(n)$-space, 
% for the case that $n$ is a prime number.  

Let us finally remark that q-Hamiltonian $G$-spaces are closely related to the concept of \emph{D-branes on group manifolds} from  string theory (see e.g. Carey-Wang \cite{car:fu}). The implications of this relationship are deserving of further study. 
\vskip.2in

\noindent{\bf Acknowledgments.} This paper was written over a rather long period, and owes to discussions 
with many people. Some of the basic ideas go back to joint work with Anton Alekseev. The idea of quantizing q-Hamiltonian spaces as $K$-homology pushforwards, rather than trying to construct an operator, originated from discussions with Greg Landweber in October 2005. It is a pleasure to thank Nigel Higson, John Roe and Jonathan Rosenberg for discussions and for their generous help with aspects of equivariant $K$-homology.

\tableofcontents

\section{Twisted equivariant $K$-homology}\label{sec:dd}
In this Section, we give a quick review of twisted equivariant
$K$-homology. We begin by discussing the geometric realization of
$H^3_G(X,\Z)$ in terms of equivariant Dixmier-Douady bundles, similar to the geometric realization 
of $H^2_G(X,\Z)$ in terms of equivariant line bundles. 
\subsection{Dixmier-Douady theory}
A well-known result of Serre (see Donovan-Karoubi \cite{don:gr}) asserts that bundles of matrix algebras 
(Azumaya bundles) over a space $X$ are classified, up to stable (or Morita) isomorphism, by the torsion subgroup 
of $H^3(X,\Z)$. To incorporate non-torsion classes, it is necessary to consider matrix algebras 
`of infinite rank'. 
\subsubsection{Dixmier-Douady bundles}
In this paper, all Hilbert spaces are taken to be complex, separable Hilbert spaces.  
For any Hilbert space $\H$, we denote by $\K(\H)$ the $C^*$-algebra 
of compact operators, i.e.~ the norm closure of the finite rank 
operators inside the algebra $\B(\H)$ of bounded linear operators. 
A \emph{Dixmier-Douady bundle} $\A\to X$ (in short, \emph{DD bundle}) is a bundle of 
$C^*$-algebras, with typical fiber $\K(\H)$ and 
structure group $\on{Aut}(\K(\H))=\on{PU}(\H)$ for some Hilbert space $\H$.
(Here $\on{PU}(\H)=\on{U}(\H)/\U(1)$ carries the strong topology.) 
Isomorphisms, pull-backs, and tensor products of such bundles are
defined in the obvious way. For any $\A$, the opposite algebra bundle
$\A^\op$ (with the same vector bundle structure, but opposite 
multiplication) is a DD bundle modeled on the compact operators on the 
conjugate Hilbert space $\ol{\H}$. A \emph{Morita trivialization} of $\A\to X$ is a bundle $\E\to X$ of 
Hilbert spaces together with an isomorphism
\[ \A\to \K(\E).\] 
Equivalently, letting $\ca{P}\to X$ be the principal
$\on{PU}(\H)$-bundle associated to $\A$, a Morita trivialization
amounts to a lift of the structure group to $\on{U}(\H)$. The obstruction to the existence
of a Morita trivialization is the \emph{Dixmier-Douady class}
$\on{DD}(\A)\in H^3(X,\Z)$. 

\subsubsection{$\Z_2$-graded DD bundles}
More generally, we will need to consider $\Z_2$-graded DD bundles $\A\to X$, modeled on $\K(\H)$ for $\Z_2$-graded 
Hilbert spaces $\H$. 
The obstruction to the existence of a $\Z_2$-graded Morita trivialization of $\A$ (given by a $\Z_2$-graded Hilbert space bundle 
$\E$ and an isomorphism $\A\to \K(\E)$ preserving gradings) is a class
\[ \on{DD}(\A)\in H^3(X,\Z)\times H^1(X,\Z_2).\]
The first component is the DD class of $\A$ after forgetting the $\Z_2$-grading. 
If it vanishes, so that $\A$ is realized as compact operators on a Hilbert space bundle $\E$,  
then the second component is the obstruction to introducing a 
compatible $\Z_2$-grading on $\E$. From now, all our DD bundles will be $\Z_2$-graded, but 
often with the trivial $\Z_2$-grading.  

\subsubsection{Example: Clifford bundles}\label{it:exclif}
%\begin{example}
Recall that the Clifford algebra of $\R^{2n}$ with its standard Euclidean metric is a $\Z_2$-graded matrix algebra
$\Cl(\R^{2n})=\End(\wedge \C^n)$. Hence, if $V\to X$ is a Euclidean vector bundle of even rank, 
the bundle $\Cl(V)$ of complex Clifford algebras, with its standard $\Z_2$-grading, is a DD bundle. 
The $H^3(X,\Z)$-component of  $\on{DD}(\Cl(V))$ is the third integral Stiefel-Whitney class of $V$, while the
$H^1(X,\Z_2)$-component measures the orientability of $V$. Given a $\Spin_c$-structure on $V$, the associated 
($\Z_2$-graded) bundle $\ca{S}\to X$ of spinors defines a Morita trivialization of $\Cl(V)$. In fact, as observed by 
Connes \cite{con:non} and Plymen \cite{ply:st} a $\Spin_c$-structure on $V$ may be \emph{defined} to be
a Morita trivialization of $\Cl(V)$, and we will take this viewpoint for the rest of this paper. 
%\end{example}

\subsubsection{Morita morphisms}\label{subsubsec:mormor}
Generalizing Morita trivializations, one has the notion of \emph{Morita morphisms}.
Suppose $\A_i\to X_i,\ i=1,2$ are two DD bundles modeled on $\K(\H_i)$.  
A Morita morphism 
\begin{equation}\label{eq:morita}
 (\Phi,\E)\colon (X_1,\A_1)\da (X_2,\A_2)
\end{equation}
is a proper map $\Phi\colon X_1\to X_2$ together with a Banach bundle $\E\to X_1$ of 
bi-modules 
\[ \Phi^*\A_2\circlearrowright
\E \circlearrowleft \A_1,\] 
locally modeled on
$\K(\H_2)\circlearrowright \K(\H_1,\H_2)\circlearrowleft \K(\H_1)$. 
The existence of such Morita morphism is equivalent to $\on{DD}(\A_1)=\Phi^*\on{DD}(\A_2)$. 
%
%In terms of the associated principal bundles, a Morita morphism amounts to 
%a lift of the structure group of the bundle $\Phi^*\A_2\otimes\A_1^{\on{op}}$
%from $\on{PU}(\H_2)\times
%\on{PU}(\H_1^{\on{op}})$  to
%$\on{PU}(\H_2\otimes\H_1^{\on{op}})$. 
The composition of Morita morphisms 
$(\Phi,\E)\colon (X_1,\A_1)\da (X_2,\A_2)$ and 
$(\Phi',\E')\colon (X_2,\A_2)\da (X_3,\A_3)$ 
is given by 
\[ (\Phi',\E')\circ (\Phi,\E)=(\Phi'\circ \Phi,\ \Phi^*\E'\otimes_{\Phi^*\A_2}\E)\colon (X_1,\A_1)\da (X_3,\A_3)\]
using the fiberwise completion of the algebraic tensor product over $\Phi^*\A_2$. 
The `identity' morphism for $\A$ is given by $(\on{id},\A)\colon (X,\A)\da (X,\A)$.  

For any Morita morphism \eqref{eq:morita} there is also a \emph{conjugate} Morita morphism 
$(\Phi,\ol{\E})\colon (X_1,\A_1^{\on{op}})\da  (X_2,\A_2^{\on{op}})$. 
Here $\ol{\E}$ is equal to $\E$ as a real vector 
bundle, but with the conjugate scalar multiplication. Denoting by $v^*\in \ol{\E}$ the element 
corresponding to $v\in \E$, the bimodule structure on $\ol{\E}$ is given by 
\[ a_1.v^*.a_2=(a_2^*. v. a_1^*)^*,\ \ \ \]
for sections $v,a_1,a_2$ of $\E,\ \A_1^{\on{op}},\ \Phi^*\A_2^{\on{op}}$.  
If $\Phi$ is invertible, we may view $(\Phinv)^*\ol{\E}$ as a $(\Phinv)^*\A_1-\A_2$ bimodule. 
The morphism $(\Phinv,(\Phinv)^*\ol{\E} )$ is then inverse to $(\Phi,\E)$.
% in the sense that 
%\[ (\Phi,\E)\circ (\Phinv,(\Phinv)^*\ol{\E} )=(\on{id},\A_2),\ \ 
%(\Phinv,(\Phinv)^*\ol{\E} )\circ (\Phi,\E)=(\on{id},\A_1).\]

\subsubsection{Twistings by line bundles}
Given DD bundles $\A_i\to X_i,\ i=1,2$ and a map $\Phi\colon X_1\to X_2$, the set of Morita morphisms
\eqref{eq:morita} is either empty, or is a principal homogeneous space under the group of 
$\Z_2$-graded line bundles $L\to X$. That is, any two Morita $\Phi^*\A_2-\A_1$ bimodules $\E,\E'$ are related 
by 
\[ \E'=\E\otimes L;\ \ \  L=\on{Hom}_{\Phi^*\A_2-\A_1}(\E,\E')\]
with $L$ the line bundle given by the bimodule homomorphisms.  A \emph{2-isomorphism} 
\[ (\Phi,\E)\simeq (\Phi,\E')\] 
between two Morita morphisms is a trivialization of this line bundle. For example, any two $\Spin_c$-structures on a Euclidean vector bundle 
$V$ (cf.~ Example \ref{it:exclif})
are related by a $\Z_2$-graded
line bundle; two $\Spin_c$-structures on $V$  are isomorphic if this line bundle is trivializable.

\subsubsection{Relative DD bundles}\label{subsubsec:rel}
Let $\Psi\colon X\to Y$ be a continuous map. By a \emph{relative DD bundle} for the given map $\Psi$, we mean a 
DD bundle $\A\to Y$ over the target together with a Morita trivialization $(\Psi,\E)\colon (X,\C)\da (Y,\A)$. 
Suppose $(\E_i,\A_i)$ are relative DD bundles for given map $\Psi_i\colon X_i\to Y_i$. 
A Morita morphism from $(\E_1,\A_1)$ to $(\E_2,\A_2)$ is given by a pair of maps 
$\tau_X\colon X_1\to  X_2$ and $\tau_Y\colon Y_1\to Y_2$ and a Morita morphism 
$(\tau_Y,\F)\colon (X_1,\A_1)\da (X_2,\A_2)$ such that 
\begin{equation}\label{eq:cop}
 (\tau_Y,\F)\circ (\Psi_1,\E_1)\simeq (\Psi_2,\E_2)\circ (\tau_X,\C).
 \end{equation}
Morita isomorphism classes of such pairs for a given map $\Psi\colon X\to Y$ are classified by 
the relative cohomology group $H^3(\Phi,\Z)$ in the ungraded case, and by $H^3(\Phi,\Z)\times H^1(\Phi,\Z_2)$ 
in the $\Z_2$-graded case. (See Appendix \ref{app:rel} for a brief discussion of relative cohomology 
groups.) We denote by $\on{DD}(\E,\A)\in H^3(\Phi,\Z)\times H^1(\Phi,\Z_2)$ the class of the relative bundle $(\E,\A)$. Given a morphism as in \eqref{eq:cop}, one has 
\[\on{DD}(\E_1,\A_1)=\tau^* \on{DD}(\E_2,\A_2),\] 
using the map in relative cohomology defined by $\tau=(\tau_X,\tau_Y)$. Furthermore, 
$\on{DD}(\ol{\E},\A^{\on{op}})=-\on{DD}(\E,\A)$. Note that if $L\to Y$ is a line bundle, then $\on{DD}(\E,\A)=\on{DD}(\E\otimes\Phi^*L,\A)$. 

\subsubsection{$G$-equivariant DD bundles}
Let $G$ be a compact Lie group. The theory outlines above generalizes to the $G$-equivariant 
case, with straightforward modifications. $G$-equivariant $\Z_2$-graded Dixmier-Douady bundles $\A\to X$ 
are classified by equivariant Dixmier-Douady classes 
$\on{DD}_G(\A)\in H^3_G(X,\Z)\times H^1_G(X,\Z_2)$. Note that if $G$ is connected, then 
$H^1_G(X,\Z_2)=H^1(X,\Z_2)$.

\subsection{Twisted $K$-homology}
In their classical paper, Donovan and Karoubi \cite{don:gr} defined twistings of the K-theory 
of a space $X$ by torsion classes in $H^3(X,\Z)$, represented by Azumaya bundles.  
J. Rosenberg \cite{ros:co} generalized to non-torsion classes, by defining the 
twisted K-theory of a space $X$ with DD bundle $\A\to X$ to be the K-theory of the $C^*$-algebra of sections of $\A$. A similar definition gives the twisted K-homology groups. 
We will follow Kasparov's approach \cite{ka:op,ka:eq} towards K-homology of $C^*$-algebras, as described in Higson-Roe's 
book \cite{hig:ana}. We will describe the main features of twisted equivariant K-homology needed for our quantization procedure; some details are deferred to Appendix \ref{app:khom}. 

Let $\A\to X$ be a $\Z_2$-graded $G$-DD bundle. Let $\Gamma_0(X,\A)$ denote the $\Z_2$-graded $G$-$C^*$-algebra of 
continuous sections of $\A$ vanishing at infinity (i.e.~ the closure of the sections of compact 
support). We define the $\A$-twisted equivariant $K$-homology of $X$ to be the $K$-homology of this 
$G$-$C^*$-algebra:
\[ K_\bullet^G(X,\A)=K^\bullet_G(\Gamma_0(X,\A)).\]
$K$-homology is a covariant functor relative to ($\Z_2$-graded, $G$-equivariant) Morita morphisms for which the underlying map $\Phi$ is proper. That is, any such Morita morphism  
$(\Phi,\E)\colon (X_1,\A_1)\da (X_2,\A_2)$ induces a group homomorphism (cf.~ Appendix \ref{app:khom})
\[ K_\bullet^G(\Phi,\E)\colon K_\bullet^G(X_1,\A_1)\to K_\bullet^G(X_2,\A_2).\]
%
%Compositions of Morita morphisms go to compositions of group homomorphisms, and the identity Morita morphism 
%$(\on{id},\A)$ of $(X,\A)$ induces the identity on $K_\bullet^G(X,\A)$. 
The map $K_\bullet^G(\Phi,\E)$ depends only on the 2-isomorphism class of $(\Phi,\E)$. 

\begin{remark}
In the definition of the groups $K_\bullet^G(X,\A)$, 
it is usually necessary to keep track of the $G$-DD bundle $\A$ as a `base point', rather than just 
record its DD class. Indeed, Morita automorphisms of $\A$ may act non-trivially on the 
$K$-groups. The group of $G$-equivariant Morita automorphisms of $\A$ is identified with the group of  
$G$-equivariant $\Z_2$-graded line bundles, by associating to any such line bundle $L\to X$
the Morita automorphism $(\id,\A\otimes L)$. The resulting 
automorphism of $K_\bullet^G(X,\A)$ depends only on the isomorphism class of 
$L$. That is, we obtain an action of $H^2_G(X,\Z)\times H^0_G(X,\Z_2)$ on 
$K_\bullet^G(X,\A)$. 

If $H^2_G(X,\Z)=0$, then all $G$-equivariant line bundles over $X$ are equivariantly isomorphic to the trivial 
line bundle. If furthermore $H^1(X,\Z_2)=0$, then the twisted $K$-groups for 
trivially graded $G$-DD bundles with prescribed class $\on{DD}_G(\A)$ are 
canonically isomorphic. (The situation is similar to the dependence of homotopy groups on a 
chosen base point.) 
\end{remark}
 
An important example of a twisted $K$-homology class is the following.  
\begin{example}
Recall that any compact manifold $M$ (possibly non-oriented)
has a fundamental class in $H_{\dim M}(M,\on{o}_{TM})$, the top degree homology group 
with coefficients in the orientation bundle. An orientation amounts to a trivialization of $\on{o}_{TM}$, 
and identifies this twisted homology group with an 
ordinary homology group. (If $M$ is non-compact, one has a similar fundamental class in Borel-Moore 
homology.)  In $K$-theory, the role of an orientation bundle is played by the Clifford bundle. 
Suppose $M$ is a Riemannian $G$-manifold of even dimension. There is a distinguished 
\emph{$K$-homology fundamental class} 
\[ [M]\in K_0^G(M,\Cl(TM)).\]
In Appendix \ref{app:khom}, we will recall Kasparov's explicit construction of this class 
\cite[$\mathsection$ 4]{ka:eq}. The isomorphism 
$K_0^G(M,\Cl(TM))\to K_0^G(M)$ for a given $\Spin_c$-structure takes $[M]$ 
to the class of the $\Spin_c$-Dirac operator (see Proposition \ref{prop:roe}).  
\end{example}

For any $G$-DD bundle $\A\to X$, there is a group isomorphism
\begin{equation}\label{eq:duality} *\colon K_\bullet^G(X,\A)\to K_\bullet^G(X,\A^{\on{op}})\end{equation}
with the property $*^2=\on{id}$ (see Appendix \ref{app:khom}). For $X=\pt,\ \A=\C$ we have $\A^{\on{op}}=\A$, and this involution is just the standard involution on the 
representation ring $R(G)=K_0^G(\pt,\C)$, given by complex conjugation of characters.

\subsection{Localization}
\label{subsec:local}
The Atiyah-Segal localization theorem \cite{at:2} carries over to the twisted case. See Freed-Hopkins-Teleman 
\cite{fr:twi} for a discussion of localization in twisted equivariant $K$-theory; we will use instead a $K$-homological formulation.
To state the Theorem suppose that $t\in Z(G)$
lies in the center of $G$. For any $R(G)$-module $X$, let $X_{(t)}$ indicate the localized module 
relative to the ideal of characters vanishing at $t$. Elements of $X_{(t)}$ may be written as 
fractions, with enumerator in $X$ and denominator a character $\chi\in R(G)$ with $\chi(t)\not=0$. 
\begin{theorem}[Localization]\label{th:locali}
Let $M$ be a compact $G$-manifold with a $G$-DD bundle $\A\to M$.
For any $t\in Z(G)$, the map 
\[ \iota_*\colon K_\bullet^G(M^t,\A|_{M^t})\to K_\bullet^G(M,\A)\]
given by the inclusion of the fixed point set $M^t$ becomes an isomorphism after localization at $t$. 
% Similarly, (assuming for simplicity that $M^t$ has even codimension)
% the map 
% \[ \iota^!\colon K_i^G(M,\A)\to K_i^G(M^t,\A|_{M^t}\otimes \Cl(\nu)),\]
% given by restriction to a tubular neighborhood of $M^t$ followed by the Thom isomorphism,
% becomes an isomorphism after localization at $t$. (Here $\nu=TM|_{M^t}/TM^t$ is the normal bundle.)  
% The inverse map $(i_*)^{-1}$ factors through $i^!$. 
\end{theorem}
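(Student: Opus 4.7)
The plan is to mimic the classical Atiyah--Segal localization argument \cite{at:2}, adapted to twisted equivariant $K$-homology in the Kasparov framework reviewed in Section~\ref{sec:dd}. The goal is to reduce the theorem to the statement that the twisted $K$-homology of the complement $M\setminus M^t$ vanishes after localization at $t$.

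First I would choose a $G$-invariant open tubular neighborhood $U$ of $M^t$ and set $V=M\setminus M^t$, so that $\{U,V\}$ is an open $G$-invariant cover of $M$. Since $U$ deformation retracts $G$-equivariantly onto $M^t$, and twisted $K$-homology is homotopy invariant for $G$-homotopies compatible with the DD bundle, the inclusion $M^t\hookrightarrow U$ induces an isomorphism; so $\iota_*$ is identified with the map induced by $U\hookrightarrow M$. The Mayer--Vietoris sequence in twisted equivariant $K$-homology attached to $\{U,V\}$ is $R(G)$-linear, and since localization at the maximal ideal $I_t\subset R(G)$ is exact, the five lemma reduces the claim to showing
\[
K_\bullet^G(V,\A|_V)_{(t)}=0
\qquad\text{and}\qquad
K_\bullet^G(U\cap V,\A|_{U\cap V})_{(t)}=0.
\]
Both are instances of the same problem: vanishing of the localized twisted $K$-homology of an open $G$-invariant submanifold on which $t$ acts without fixed points.

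To treat such a submanifold I would invoke the slice theorem. At every point $x$ not fixed by $t$, a $G$-invariant tubular neighborhood has the form $G\times_H S$ with $H=G_x$ a closed subgroup and $S$ a slice; because $t$ is central and $x\notin M^t$, we have $t\notin H$. Covering by finitely many such neighborhoods and iterating Mayer--Vietoris reduces the problem to showing that $K_\bullet^G(G\times_H S,\A)_{(t)}=0$ whenever $t\notin H$. Here I would apply the induction isomorphism in twisted equivariant $K$-homology, which identifies this group with $K_\bullet^H(S,\A|_S)$ as an $R(G)$-module, the $R(G)$-action factoring through restriction $R(G)\to R(H)$.

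One is then left with a purely algebraic statement: for a closed subgroup $H\subset G$ not containing the central element $t$, every $R(H)$-module is zero after localizing along $R(G)\to R(H)$ at $I_t$. This is the classical algebraic input of Atiyah and Segal~\cite{at:2}: by Segal's description of $\on{Spec}(R(G))$, the support of the $R(G)$-module $R(H)$ consists of prime ideals coming from semisimple conjugacy classes meeting $H$, and since $t\in Z(G)\setminus H$ the ideal $I_t$ does not lie in this support. Concretely, one produces a character $\chi\in R(G)$ with $\chi(t)\neq 0$ whose restriction to $H$ is nilpotent, so that some power of $\chi$ annihilates any $R(H)$-module; inverting $\chi$ then kills the module. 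The main obstacle will be verifying that the Mayer--Vietoris sequence and the induction isomorphism are genuinely $R(G)$-linear and compatible with the DD-bundle twisting in Kasparov's framework; modulo these essentially formal checks, the remainder is the Atiyah--Segal argument applied verbatim to the section $C^*$-algebras $\Gamma_0(X,\A)$ used to define the twisted $K$-homology groups.
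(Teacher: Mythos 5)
Your proposal is correct and follows essentially the same route as the paper's own (very terse) argument: both reduce via Mayer--Vietoris and slice neighborhoods to the vanishing of the localized twisted $K$-homology over $G$-sets whose stabilizers $H$ do not contain $t$, and both rest on the Atiyah--Segal fact that $R(H)_{(t)}=0$. The only cosmetic difference is that you pass through the induction isomorphism to land in $R(H)$, whereas the paper observes directly that $K_\bullet^G(G/H,\A)$ carries a $K^0_G(G/H)\cong R(H)$-module structure through which the $R(G)$-action factors.
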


The proof is a straightforward modification of the argument in \cite{at:2}: One first argues that 
if $H$ is a closed subgroup of $G$ with $t\not\in H$, then $K_\bullet^G(G/H,\A)_{(t)}=0$ for 
any $G$-DD bundle $\A\to G/H$. (In this case, the $R(G)=K_0^G(\pt)$-module structure comes from an $R(H)=K^0_G(G/H)$-module 
structure. But $R(H)_{(t)}=0$.)  
The same is then true for closed tubular neighborhoods 
of orbits in $M-M^t$. The general result follows by induction, using a Mayer-Vietoris argument.

\begin{remark}
If $U$ is a $G$-invariant open subset of $M$ with $M^t\subset U$, then the localized restriction map 
$K_\bullet^G(M,\A)_{(t)}\to K_\bullet^G(U,\A|_U)_{(t)}$ is an isomorphism. This follows by excision, since $K_\bullet^G(M\backslash U,\A|_{M\backslash U})_{(t)}=0$. Hence, 
the inverse map $(i_*)^{-1}\colon K_\bullet^G(M,\A)_{(t)}\to K_\bullet^G(M^t,\A|_{M^t})_{(t)}$ factors through the restriction to any open neighborhood of $M^t$. 
\end{remark}

%The isomorphism \eqref{eq:KK} translates $\iota_*$ into the wrong-way functoriality map
% 
%\[ \iota_! \colon K^i_{G,cp}(TM^t,\pi_{M^t}^*\A^{\on{op}}|_{M^t})\to K^i_{G,cp}(TM,\pi_M^*\A^{\on{op}}).\]
%Its composition $\iota^*\circ \iota_!$ with the pull-back 
% 
%\[ \iota^*\colon K^i_{G,cp}(TM,\pi_M^*\A^{\on{op}})\to K^i_{G,cp}(TM^t,\pi_{M^t}^*\A^{\on{op}}|_{M^t})\]
% 
%is multiplication by the pull-back of $\lambda_{-1}(\nu\otimes \C)\in K^0_{G,cp}(M^t)$, where 
%$\nu=TM|_{M^t}/TM^t\to M^t$ is the normal bundle. Atiyah-Segal argue that $ \lambda_{-1}(\nu\otimes \C)$ is invertible %after localization at $t$, so that $\lambda_{-1}(\nu\otimes \C)^{-1}\circ\iota^*$ is the inverse map to 
%$\iota_!$. 

\subsection{$K$-theoretic interpretation of the fusion ring}\label{subsec:mult}
We begin by introducing the notion of a multiplicative DD bundle, similar to that of a multiplicative gerbe, 
as studied e.g. in Brylinski-McLaughlin \cite{br:ge1} and Waldorf \cite{wal:mul}. 
Let $G$ be a compact Lie group, viewed as a $G$-space by the conjugation action. We denote by 
\[ \on{Mult}\colon G\times G\to G,\ \ \on{Inv}\colon G\to G,\ \ i\colon \pt \to G\]
the group multiplication, the inversion and the inclusion of the group unit. 

\begin{definition}
A \emph{multiplicative DD bundle over $G$} is a $G$-DD bundle $\A\to G$ together with a 
a $G$-equivariant Morita morphism 
\begin{equation}\label{eq:multi}
 (\on{Mult},\F_{\on{Mult}})\colon (G\times G,\A\times\A)\da (G,\A),
 \end{equation}
such that the following associativity property holds:
\[ (\on{Mult},\F_{\on{Mult}})\circ (\on{Mult}\times\id ,\F_{\on{Mult}}\times \A)\simeq (\on{Mult},\F_{\on{Mult}})\circ (\id \times \on{Mult} ,\A\times \F_{\on{Mult}}).\]
\end{definition}
From the multiplication morphism $\F_{\on{Mult}}$, one obtains $G$-equivariant 
Morita morphisms 
\begin{equation}\label{eq:incinv}  \begin{split}(i,\F_i)&\colon (\pt,\C)\da (G,\A)
\\ (\on{Inv},\F_{\on{Inv}})&\colon (G,\A^{\on{op}})\da (G,\A).
\end{split}\end{equation}
acting as a unit and inversion, respectively. (For example, to construct $\F_i$, note that $\F_{\Mult}$ restricts to a Morita morphism 
$\A_e\otimes\A_e\da \A_e$. Multiply by $\A_e^{\on{op}}$, and use that $\A_e\otimes \A_e^{\on{op}}$ is canonically Morita trivial, to get a Morita isomorphism $\A_e\da \C$.) 

For a multiplicative $G$-DD bundle $\A\to G$, the twisted $K$-homology group 
\begin{equation}\label{eq:r} \mathsf{R}_\bullet=K_\bullet ^G(G,\A)\end{equation}
acquires a ring structure, with product $\mathsf{R}_\bullet\otimes\mathsf{R}_\bullet\to \mathsf{R}_\bullet$ 
given as push-forward $\on{Mult}_*=K_\bullet^G(\on{Mult},\F_{\on{Mult}})$. Furthermore, 
there is an $R(G)$-module homomorphism 
\[ R(G)\to \mathsf{R}_\bullet\]
given as push-forward $i_*=K_\bullet^G(i,\F_i)$, and an involution 
\[ *\colon \mathsf{R}_\bullet\to \mathsf{R}_\bullet\]
obtained as a composition of the isomorphism $K_\bullet^G(G,\A)\to K_\bullet^G(G,\A^{\on{op}})$ from \eqref{eq:duality}
with the push-forward $\on{Inv}_*=K_\bullet^G(\on{Inv},\F_{\on{Inv}})$. 
 
The ring $\mathsf{R}_\bullet$ was computed by Freed-Hopkins-Teleman \cite{fr:lo1} in great generality. 
We will only consider the case that the group $G$ is simply connected. Let $G=G_1\times\cdots\times\cdots G_N$ be the decomposition into simple factors. The 
cohomology of $G$ vanish in degrees 1 and 2 (both equivariantly and non-equivariantly), while 
$H^3_G(G,\Z)=H^3(G,\Z)=\Z^l$ has no torsion. The classes $x\in H^3_G(G,\Z)$ are primitive, that is 
$\on{Mult}^* x=\pr_1^* x+\pr_2^* x$. As a consequence, any $G$-DD bundle 
$\A\to G$ has a multiplicative structure, where $\F_{\on{Mult}}$ is unique up to 2-isomorphism. 
We say that $\A$ is \emph{at level $l$} if $\on{DD}_G(\A)=l$ under the isomorphism 
$H^3_G(G,\Z)=\Z^l$. 

For any $k\in \Z^N$, $k_i\ge 0$ we denote by $R_k(G)$ the \emph{level $k$ fusion ideal} (Verlinde algebra). It is a quotient of the representation ring by the \emph{level $k$ fusion ideal} $I_k(G)\subset R(G)$ (see Appendix \ref{app:fus}).  
\begin{theorem}[Freed-Hopkins-Teleman \cite{fr:lo1}] \label{th:fht}
  Let $k\in \Z^N,\ k_i\ge 0$ be a given level, and let $\A\to G$ be a $G$-DD bundle at the shifted level 
  $k+\cox$, where $\cox=(\cox_1,\ldots,\cox_N)$ are the dual Coxeter numbers. Then  
  \[ K_\bullet ^G(G,\A) \cong R_k(G)\]
as a $\Z_2$-graded ring with involution (in particular $K_1^G(G,\A)=0$). 
  More precisely, the homomorphism $R(G)\to K_\bullet ^G(G,\A)$ is onto, with kernel
  $I_k(G)$.  
\end{theorem}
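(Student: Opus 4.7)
My plan is to construct the homomorphism $R(G) \to K_\bullet^G(G,\A)$ as the pushforward $i_*$ along the identity inclusion, show by a localization argument that it annihilates the fusion ideal $I_k(G)$, and identify the quotient with $R_k(G)$ by reduction to a torus calculation. The pushforward
\[ i_* \colon R(G) = K_0^G(\pt, \C) \to K_0^G(G, \A) \]
is induced from the Morita morphism $(i, \F_i)\colon (\pt, \C) \da (G, \A)$ of \eqref{eq:incinv}; that $i_*$ is a ring map compatible with the involutions follows from the associativity axiom for $\F_{\on{Mult}}$ together with the standard unit and inverse relations built into a multiplicative DD bundle.

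Next I apply the localization theorem (Theorem~\ref{th:locali}). Although stated for central $t$, the same Mayer--Vietoris argument yields, for any $t \in T$ and any $G$-DD bundle over the conjugation $G$-space $G$, an isomorphism
\[ K_\bullet^{Z_G(t)}(G^t, \A|_{G^t})_{(t)} \cong K_\bullet^G(G,\A)_{(t)}, \]
with $G^t$ the centralizer of $t$. For a regular element $t=t_\lambda$ one has $G^{t_\lambda}=T$; for $t \in T$ not conjugate to any $t_\lambda$, I would show (via the torus computation below) that the localization vanishes. Since $K_\bullet^G(G,\A)$ is finitely generated over $R(G)$ (by a Mayer--Vietoris argument applied to a finite cover of $G$ by $G$-equivariant tubes), it follows that $K_\bullet^G(G,\A)$ is supported precisely on $\{t_\lambda\}_{\lambda \in \Lambda_k^*}$, so $I_k(G)$ acts by zero and we obtain a factorization $R_k(G) \to K_0^G(G,\A)$.

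The torus step is the heart of the proof. Pulling back $\A$ along $\exp\colon \t \to T$ produces a $T$-equivariant DD bundle on $\t$, which is Morita trivial by contractibility; the descent data to $T$ records a central extension of the integral lattice $\Lambda \subset \t$ at level $k+\cox$. Through this description, $T$-equivariant sections of $\A|_T$ are identified with sections of an associated twisted line bundle over $T$, and a direct computation (using $K_\bullet^T(T) \cong R(T) \otimes K_\bullet(T)$ together with the twist) gives $K_0^T(T, \A|_T) \cong R(T)$ with the $R(T)$-action shifted by $k+\cox$. Passing to $W$-invariants to recover the $G$-equivariant theory and invoking the Weyl character formula identifies the result with $R_k(G)$ exactly as in Appendix~\ref{app:fus}; the same computation forces $K_1^G(G,\A)=0$ since only even-degree classes survive after taking shifted invariants.

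The main obstacle is the precise identification in the torus step: matching the cocycle of $\A|_T$ with the level $(k+\cox)$ central extension of $\Lambda$, and matching the resulting shifted $W$-action on $R(T)$ with the affine Weyl action that defines $I_k(G)$ through the points $t_\lambda = \exp(\tfrac{\lambda+\rho}{k+\cox})$. The shift $k \rightsquigarrow k+\cox$ appears here as the relative twist between these two normalizations, and tracking it carefully through the chain of Morita morphisms and $\Z_2$-gradings is the substantive work that Freed--Hopkins--Teleman carried out.
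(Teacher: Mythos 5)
The paper does not prove Theorem~\ref{th:fht}: it is quoted from Freed--Hopkins--Teleman~\cite{fr:lo1} and used as a black box. So there is no internal argument to compare your sketch against; the useful thing I can do is assess the sketch on its merits, and there the outline is reasonable (build $i_*$ from the unit Morita morphism, localize, reduce to $T$) but the two load-bearing steps are gapped rather than merely compressed.

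First, the torus computation is not what you say it is. With a nondegenerate twist at level $l=k+\cox$, the twisted group $K_\bullet^T(T,\A|_T)$ is not `$R(T)$ with a shifted $R(T)$-action'; it is a \emph{finite} $\Z$-module, concentrated in a single degree, and as an $R(T)$-module it is supported on the finite subgroup $T_l=B_l^\sharp(\Lambda^*)/\Lambda$ (compare Lemma~\ref{lem:ma} and Proposition~\ref{prop:restrict}, which encode exactly the restriction of the twist that you would have to use). Second, the step `passing to $W$-invariants to recover the $G$-equivariant theory' is wrong as stated, and this is where the real content of the theorem lives. The passage from $K_\bullet^T(T,\A|_T)$ to $K_\bullet^G(G,\A)$ is a twisted Weyl induction/integration statement: one pushes forward along $G\times_T T\to G$, and the pushforward is weighted by the Clifford bundle of the isotropy representation $\g/\t$. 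It is precisely this Clifford twist that produces the $\rho$-shift and turns $W$-\emph{alternation} (not invariance) into the Weyl/Kac numerator; it also shifts the $\Z_2$-degree, which is what you would need to account for the assertion $K_1^G(G,\A)=0$. Declaring that `only even-degree classes survive after taking shifted invariants' is not an argument; making this reduction precise (including the degree bookkeeping) is essentially the theorem. Finally, the extension of Theorem~\ref{th:locali} beyond central $t$ is indeed possible (Segal localization works at any prime of $R(G)$, with the centralizer $Z_G(t)$ replacing $G$), and your finite-generation observation is the right idea for turning the support computation into the statement that $I_k(G)$ acts by zero, but you should spell out how the $G$-twist restricts to a $Z_G(t)$-twist on $G^t$ before invoking it. As it stands, the proposal records the correct overall shape of the argument while leaving the two genuinely hard steps --- the twisted torus computation and the induction from $T$ to $G$ --- unproved.
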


\section{Pre-quantization of group-valued moment maps}
Let $G$ be a Lie group, with Lie algebra $\g$, and suppose $B$ is a
$G$-invariant, symmetric, nondegenerate bilinear form on $\g$.
Equivalently, $G$ carries a bi-invariant Riemannian metric.
The left, right invariant Maurer-Cartan-forms on $G$ will be denoted 
$\theta^L,\theta^R\in\Om^1(G,\g)$. We denote by $\eta\in\Om^3(G)$ the Cartan 3-form, 
\[ \eta=\f{1}{12}B(\theta^L,[\theta^L,\theta^L]).\]
The form $\eta$ is bi-invariant, hence closed. 
It has a closed equivariant (relative to the
conjugation action of $G$) extension $\eta_G\in \Om_G^3(G)$ in the Cartan model of equivariant cohomology 
(cf.~ Appendix \ref{app:eq}),
\[ \eta_G(\xi)=\eta-\f{1}{2}B(\theta^L+\theta^R,\xi).\]
%

%%%%%%%%%%%%%%%%%%%%%%%%%%%%%%%%%%%%%%%%%%%%%%%%%%%%%%%%%%%%%%%%%%%%%%%%%%%%%%%%%

\subsection{Group-valued moment maps}\label{subsec:gro}
The following  concept of a $G$-valued moment map was introduced in \cite{al:mom}.
\subsubsection{Definition}
A \emph{q-Hamiltonian $G$-space} is a triple
$(M,\omega,\Phi)$ where $M$ is a $G$-manifold, $\omega\in \Om^2(M)$ is
an invariant 2-form, and $\Phi\in C^\infty(M,G)$ is a $G$-equivariant
map such that the following two axioms hold:
\begin{enumerate}
\item[(i)] $\d_G\omega=-\Phi^*\eta_G$, 
\item[(ii)] $\Phi$ is transversal to the kernel of $\omega$,  i.e.~  
\[\ker(\om_m)\cap \ker(\d_m\Phi)=\{0\},\ \ \ m\in M.\] 
\end{enumerate}
  Assuming (i), condition (ii) is equivalent to the 
  assertion that the kernel of $\omega_m$ equals the image of $\ker(\Ad_{\Phi(m)}+I)\subset\g$ 
  under the infinitesimal action $\g\to T_mM$. This equivalence was noted independently by
  Bursztyn-Crainic \cite{bur:di} and Xu \cite{xu:mom}.
\subsubsection{Examples}
We recall the basic examples and constructions with group-valued moment maps.
We will not be very specific about the
2-forms, but mainly describe the action and the moment map. See
\cite{al:mom,al:pur} or the indicated references for further details.
\begin{enumerate}
\item {\bf Conjugacy classes}.  Let $\Co$ be any conjugacy class, and
      denote by $\Phi\colon \Co\hra G$ the inclusion. $\Co$ carries a
      unique 2-form $\om$ for which $(\Co,\om,\Phi)$ is a
      q-Hamiltonian $G$-space. It is the q-Hamiltonian counterpart to coadjoint orbits $\O\subset\g^*$
      as Hamiltonian $G$-spaces.  
\item {\bf The double}. Let $\mathbf{D}(G)=G\times G$ with $G\times
      G$-action $(g_1,g_2).(a,b)=(g_1 a g_2^{-1},\ g_2 b g_1^{-1})$.
      Put $\Phi(a,b)=(ab,a^{-1}b^{-1})$. Then $\mathbf{D}(G)$ is a
      q-Hamiltonian $G\times G$-space. It is the q-Hamiltonian
      counterpart to $T^*G$ as a Hamiltonian $G\times G$-space.
      Let
      $D(G)$ be equal to $\mathbf{D}(G)$ as a manifold, but with the
      diagonal action $g.(a,b)=(gag^{-1},gbg^{-1})$. Then $D(G)$ is a
      q-Hamiltonian $G$-space with moment map
      $\Phi(a,b)=aba^{-1}b^{-1}$.
\item {\bf Spheres, quaternionic projective spaces}.  Suppose
      $G=\SU(n)$. The action of $G$ on $\C^n\oplus \R\cong \R^{2n+1}$
      restricts to an action on the unit sphere $M=S^{2n}$.  It turns
      out that there exists a 2-form and moment map for this action,
      making $(S^{2n},\om,\Phi)$ into a q-Hamiltonian $\SU(n)$-space.
      This example was found in \cite{al:du} for $n=2$, and \cite{hu:imp}
      for $n>2$. Recently, A. Eshmatov \cite{esh:ex} found that similarly, the quaternionic
      projective space $\mathbb{H} P(n)$ is a q-Hamiltonian
      $\on{Sp}(n)$-space. 
\item {\bf Moduli spaces of flat $G$-bundles.}
Let $\Sig_h^r$ be a compact connected oriented surface of genus $h$ with $r\ge 1$ boundary components. 
Fix a base point $x_i$ on each boundary component, and let $\M_G(\Sig_h^r;x_1,\ldots,x_r)$ be the moduli space of flat $G$-bundles on $\Sig_h^r$, up to gauge transformations that are trivial at the base points. The full gauge group 
action descends to an action of $G^r$ on the moduli space, and makes 
$\M_G(\Sig_h^r;x_1,\ldots,x_r)$ into a q-Hamiltonian $G^r$-space, with moment map
given by the holonomy around the boundary components. Special cases are $\M_G(\Sig_0^2;x_1,x_2)\cong \mathbf{D}(G)$ and $\M_G(\Sig_1^1,x)\cong D(G)$, where the isomorphism depends on a choice of generators for the 
fundamental groupoid of $\Sig_h^r$. 
% The group $\mathcal{G}(\Sig)$ acts on $\Om^1(\Sig,\g)$ by gauge transformations, 
% $g.A=\Ad_g(A)-g^*\theta^R$. Fix a base point $x_i$ on each boundary component, and let 
% $\mathcal{G}(\Sig;x_1,\ldots,x_r)\subset \mathcal{G}(\Sig)$ be the subgroup of 
% gauge transformations such that $g(x_i)=e$ for all $i=1,\ldots,r$. The space 
%
% \[ \M_G(\Sig;x_1,\ldots,x_r)=\{A\in \Om^1(\Sig,\g)|\ \d A+\hh[A,A]=0\}\Big/\mathcal{G}(\Sig;x_1,\ldots,x_r)\]
%
% of flat connections up to based gauge equivalence is a q-Hamiltonian 
% $G^r$-space. The action of $G^r$ is the action of the quotient group 
% $\mathcal{G}(\Sig)/\mathcal{G}(\Sig;x_1,\ldots,x_r)$, and the moment map is 
% defined by the holonomies around the boundary 
% circles. Letting $\Sig_h^r$ denote the connected oriented surface of genus $h$ with $r$ boundary components, 
% one finds that $\M_G(\Sig_0^2;x_1,x_2)\cong \mathbf{D}(G)$.
%
\end{enumerate}

\subsubsection{Conjugates} 
Let $(M,\om,\Phi)$ be a q-Hamiltonian $G$-space. Denote by $M^*$ the space $M$ with the same $G$-action, 
2-form $\om^*=-\om$, and moment map $\Phi^*=\on{Inv}\circ \Phi$ where $\on{Inv}\colon G\to G,\ g\mapsto g^{-1}$
is the inversion map.  Then $(M^*,\om^*,\Phi^*)$ is a q-Hamiltonian
    $G$-space called the \emph{conjugate} of $M$. Clearly $(M^*)^*=M$. 
\subsubsection {Fusion} Let $(M,\om,\Phi)$ be a q-Hamiltonian $G\times G$-space.
Denote by $M_{\on{fus}}$ the space $M$ with the diagonal $G$-action, let $\Phi_{\on{fus}}\colon M_{\on{fus}}\to G$ be the map obtained by composing $\Phi$ with group 
multiplication, and put $\om_{\on{fus}}=\om+\Phi^*\sig$ where
\[ \sig=\hh B(\pr_1^*\theta^{L},\pr_2^*\theta^{R})\in\Om^2(G\times G).\]
Then $(M_{\on{fus}},\om_{\on{fus}},\Phi_{\on{fus}})$ is a q-Hamiltonian
$G$-manifold, called the \emph{fusion} of $M$.  In case
$M=M_1\times M_2$ is the direct product of q-Hamiltonian $G$-manifolds, we denote $M_{\on{fus}}$ by $M_1\fus M_2$, or simply
$M_1\times M_2$ if there is no risk of confusion. As an example, $\mathbf{D}(G)_{\on{fus}}=D(G)$. 
% More
%generally, $M(\Sig_h^r)\cong G^{2(h+r-1)}$ is isomorphic to the space obtained by fusing $r-1$
% copies of $\mathbf{D}(G)$ with $h$ copies of $D(G)$.
%
\subsubsection{Reduction}
Suppose $(M,\om,\Phi)$ is a q-Hamiltonian $G$-space, and that 
$e$ is a regular value of $\Phi$. Then $G$ acts locally freely on $Z=\Phi^{-1}(e)$, and the 
\emph{symplectic
      quotient} $M_\red\equiv M\qu G=Z/G$ is a  
symplectic orbifold, with 2-form $\om_\red$ induced from $\om$. If the $G$-action on $\Phinv(e)$ 
is free, then $M\qu G$ is a smooth symplectic manifold.  On the other hand, if $e$ is a singular value then it is a
      stratified symplectic space in the sense of Sjamaar-Lerman \cite{sj:st}.

By a result of \cite{al:mom}, the moduli space of flat $G$-bundles over $\Sig_h^r$, 
with boundary holonomies in prescribed conjugacy classes $\Co_j\subset G$, is a symplectic quotient
\[ M(\Sig_h^r;\Co_1,\ldots,\Co_r)=(D(G)^h\fus \Co_1\cdots\fus \Co_r)\qu G.\]
Here the symplectic structure coincides with the standard one, constructed by Atiyah-Bott \cite{at:ge,at:mo} using gauge theory. The case of a surface without boundary ($r=0$) is
included as $M(\Sig_h^0)=M(\Sig_h^1)\qu G$.
\subsubsection{Exponentials}\label{subsubsec:exp} 
Let $\varpi\in\Om^2(\g)$ denote the
      primitive of $-\exp^*\eta\in\Om^3(\g)$ given by the de Rham
      homotopy operator.  Suppose $(M_0,\om_0,\Phi_0)$ is an ordinary
      Hamiltonian $G$-space, with moment map $\Phi_0\colon M_0\to
      \g^*\cong \g$. Let $M=M_0$ with the same $G$-action, moment map
      $\Phi=\exp(\Phi_0)$, and 2-form $\om=\om_0+\Phi_0^*\varpi$.
      Then $(M,\om,\Phi)$ satisfies condition (i) of a q-Hamiltonian
      $G$-manifold, while condition (ii) holds over the open subset of
      all $m\in M$ such that $\d\exp|_{\Phi_0(m)}$ has maximal rank.
\subsubsection{Morphisms}      
A \emph{morphism} $\varphi$ from a q-Hamiltonian $G$-space $(M,\om,\Phi)$ to a
q-Hamiltonian $G'$-space $(M',\om',\Phi')$ is given by a group
homomorphism $\varphi_G\colon G\to G'$ such that the map of Lie algebras 
preserves bilinear forms, together with
a smooth map $\varphi_M\colon M\to M'$ satisfying
\begin{equation}\label{eq:morphism}
 \varphi_M^*\om'=\om,\ \ \Phi'\circ \varphi_M=\varphi_G\circ \Phi,\ \ \varphi_M(g.x)=\varphi_G(g).\varphi_M(x).
 \end{equation}
An invertible morphism from $(M,\om,\Phi)$ to itself is called an automorphism. 
\begin{proposition}\label{prop:fix}
  Suppose a compact Lie group $K$ (possibly disconnected) acts by
  automorphisms of the q-Hamiltonian $G$-space $(M,\om,\Phi)$.  Then
  the fixed point set $M^K$ is a q-Hamiltonian $G^K$-space, in such a
  way that the inclusion $M^K\hra M$ is a
  q-Hamiltonian morphism.
\end{proposition}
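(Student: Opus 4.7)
The plan is to verify the two q-Hamiltonian axioms for $(M^K,\om|_{M^K},\Phi|_{M^K})$ with respect to the natural $G^K$-action. First I would handle the preliminaries: since $K$ is compact, $M^K$ is a closed submanifold of $M$; since $K$ acts on $G$ by automorphisms preserving $B$, the fixed subgroup $G^K\subset G$ is a closed Lie subgroup with Lie algebra $\g^K$, and the $K$-invariant $B$-orthogonal splitting $\g=\g^K\oplus\m$ shows that $B|_{\g^K}$ is non-degenerate. $K$-equivariance of the $G$-action and of $\Phi$ then forces $G^K$ to act on $M^K$ and $\Phi(M^K)\subset G^K$, so the restricted data $(M^K,\om|_{M^K},\Phi|_{M^K})$ is well-defined with $\om|_{M^K}$ automatically $G^K$-invariant.

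For axiom (i), I would show that the equivariant Cartan $3$-form $\eta_{G^K}$ (built from $B|_{\g^K}$) coincides with the restriction of $\eta_G$ along the embeddings $\g^K\hra\g$ and $G^K\hra G$. This is immediate from the explicit formula for $\eta_G$, because the Maurer--Cartan forms on $G^K$ pull back from those on $G$ and take values in $\g^K$, on which $B$ and $B|_{\g^K}$ agree. Restricting the identity $\d_G\om=-\Phi^*\eta_G$ to $\xi\in\g^K$ and pulling back to $M^K$ then yields $\d_{G^K}(\om|_{M^K})=-(\Phi|_{M^K})^*\eta_{G^K}$.

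The main obstacle is axiom (ii): one needs the weaker non-degeneracy condition on $M^K$ (taking kernels within $T_mM^K$) to follow from the condition on $M$. For this I would use averaging. Choose a $K$-invariant splitting $T_mM=T_mM^K\oplus W$, where $W$ has no nonzero $K$-fixed vectors (so the $K$-averaging operator $P\colon T_mM\to T_mM$ projects onto $T_mM^K$ with kernel $W$). Suppose $v\in T_mM^K$ satisfies $\om_m(v,v')=0$ for all $v'\in T_mM^K$ and $\d_m(\Phi|_{M^K})(v)=0$. For any $w\in W$, the $K$-invariance of $\om_m$ combined with $v\in T_mM^K$ gives
\[
\om_m(v,w)=\int_K \om_m(v,dk^{-1}\cdot w)\,dk=\om_m(v,P(w))=0.
\]
Hence $\om_m(v,\cdot)$ vanishes on all of $T_mM$, and since $\d_m\Phi(v)=\d_m(\Phi|_{M^K})(v)=0$, axiom (ii) for $M$ forces $v=0$.

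Finally, the inclusions $G^K\hra G$ and $M^K\hra M$ satisfy the morphism conditions \eqref{eq:morphism} by construction: $G^K\hra G$ preserves the bilinear form (it is just restriction of $B$), $\om|_{M^K}$ is the pullback of $\om$, $\Phi\circ\iota_{M^K}=\iota_{G^K}\circ\Phi|_{M^K}$, and equivariance is automatic. This completes the verification that $M^K\hra M$ is a q-Hamiltonian morphism.
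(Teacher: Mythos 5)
Your proof is correct and follows essentially the same strategy as the paper's: the crux in both is that the $K$-invariant complement $W$ to $T_mM^K$ is $\om_m$-orthogonal to $T_mM^K$, so that a vector in $T_mM^K$ annihilating $\om|_{M^K}$ and $\d\Phi|_{M^K}$ already lies in $\ker\om_m\cap\ker\d_m\Phi$. The only (cosmetic) difference is that you establish the orthogonality by averaging with the projection $P$, whereas the paper notes that $W$ is spanned by vectors $(\id-k_m)u$ and computes $\om_m(v,(\id-k_m)u)=0$ directly; these are two phrasings of the same observation. Your extra care with the preliminaries (non-degeneracy of $B|_{\g^K}$, $\Phi(M^K)\subset G^K$, and $\iota_{G^K}^*\eta_G=\eta_{G^K}$) is the content the paper dismisses as ``immediate,'' and it is all correct.
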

\begin{proof}
Property (i)
      is immediate. For (ii) suppose $m\in M^K$. For $k\in K$ let $k_m$ denote the linearized 
      action on $T_mM$. The invariant subspace $T_mM^K\subset T_mM$ has a unique
      $K$-invariant complement, spanned by the images of $(\on{id}-k_m)$ for $k\in K$.  
      Since $\om_m$ is $K$-invariant, the resulting splitting of $T_mM$ is
      $\om_m$-orthogonal. It follows that $\ker(\om^K)_m=\ker(\om_m)^K$. Hence 
\[ \ker(\om^K)_m\cap \ker(\d\Phi^K)_m=\ker(\om_m)^K\cap \ker(\d_m\Phi)^K=
(\ker(\om_m)\cap \ker(\d_m\Phi))^K=0.\qedhere\]
\end{proof}
As a special case, if $(M,\om,\Phi)$ is a q-Hamiltonian $G$-space, and $a\in G$, 
then the components of the fixed point set $M^a$ are q-Hamiltonian
$G^a$-spaces, in such a way that the inclusion map is a morphism.
Some other examples of morphisms:

\begin{enumerate}
\item (Finite quotients.)
 Suppose a finite group $\Gamma$ acts on $(M,\om,\Phi)$, preserving
  the 2-form and moment map. Then $M/\Gamma$ inherits the structure of
  a q-Hamiltonian $G$-manifold, in such a way that the quotient map is
  a morphism.  For instance, if $\Gamma$ is a subgroup of the center of 
  $G$, the quotient $D(G)/(\Gamma\times\Gamma)=D(G/\Gamma)$ is a q-Hamiltonian 
$G$-space.
\item (Coverings.)
    Suppose $(M,\om,\Phi)$ is a q-Hamiltonian $G$-space, and $\wt{G}$ a
  finite cover of $G$. Let $\wt{M}\subset M\times \wt{G}$ be
  the fiber product with respect to $\Phi\colon M\to G$ and the
  projection $\wt{G}\to G$. Then $\wt{M}$ is a Hamiltonian $\wt{G}$-space in such a way that 
  the quotient map is a morphism. 
\item (Cross-sections.)
  Suppose $H\subset G$ is a closed subgroup, and $V\subset G$ is an
  $H$-equivariant \emph{cross-section}, i.e.~ an $H$-invariant
  submanifold such that the map $G\times_H V\to G$ is a diffeomorphism
  onto an open subset of $G$.  Then the pre-image $Y=\Phinv(V)$
  carries the structure of a q-Hamiltonian $H$-space, in such a way that the 
  inclusion is a q-Hamiltonian morphism.
\end{enumerate}

%\footnote{The space $M(\Sig_0^2)$ carries an involution, $F(u,v)=
%  (v,v^{-1}).(u,v)=(\Ad_v(u),v^{-1})$. This involution is equivariant,
%  relative to the automorphism of $G\times G$ interchanging the two
%  factors, and $(F^*\Phi)(u,v)=(\Ad_v(u),u^{-1})$. The 2-form
%  transforms as $F^*\om=-\om$.  } The space $M(\Sig_1^1)\cong G^2$ is
% obtained from this fundamental example by fusion: The action is the
% diagonal action, and the new moment map is $(u,v)\mapsto
% uvu^{-1}v^{-1}$. (In fact, $M(\Sig_1^1)\cong D(G)$.)  More generally,
% any $M(\Sig_h^r)\cong G^{2h+2(r-1)}$ with $r\ge 1$ is obtained by
% `fusion' (see Section \ref{subsec:fusion} below) of $h$ copies of
% $M(\Sig_1^1)$ and $r-1$ copies of $M(\Sig_0^2)$; thus
% $M(\Sig_h^r)\cong (G^2)^{h+r-1}$.
%

%%%%%%%%%%%%%%%%%%%%%%%%%%%%%%%%%%%%%%%%%%%%%%%%%%%%%%%%%%%%%%%%%%%%%%%%%%%%%%%%%

\subsection{Prequantization}
The pre-quantization of a symplectic manifold $(M,\om)$ is traditionally defined to be a line bundle 
$L\to M$ whose first Chern class is an integral lift of the cohomology class $[\om]\in H^2(M,\R)$ 
of the symplectic form. We will give a similar definition for q-Hamiltonian $G$-spaces, using 
DD bundles.    
Other viewpoints can be found in the papers of Shahbazi \cite{sha:pre} (using gerbes) and 
Laurent-Gengoux-Xu \cite{lau:qu} (using quasi-presymplectic groupoids). 

The conditions $\d_G\om=-\Phi^*\eta_G$ and $\d_G\eta_G=0$ mean that the pair $(\om,\eta_G)$ 
defines a cocycle for the relative equivariant de Rham theory (cf.~ Appendix \ref{app:rel}). 
Let $[(\om,\eta_G)]\in H^3_G(\Phi,\R)$ 
denote its cohomology class. The relative cohomology group with integer coefficients $H^3_G(\Phi,\Z)$, on the other hand, 
is realized as equivalence classes of relative $G$-DD bundles $(\E,\A)$ (cf.~ Section \ref{subsubsec:rel}), i.e.~ 
as Morita morphisms
 \begin{equation}\label{eq:pre} (\Phi,\E)\colon (M,\C)\da (G,\A). \end{equation}
\begin{definition}\label{def:preq}
Suppose $G$ is connected and semi-simple. A \emph{pre-quantization} of a q-Hamiltonian $G$-space $(M,\om,\Phi)$ is a 
relative $G$-DD bundle $(\E,\A)$ such that 
$\on{DD}_G(\E,\A)$ is an integral lift of $[(\om,\eta_G)]\in H^3_G(\Phi,\R)$. Sometimes we refer to the 
Morita morphism \eqref{eq:pre} as the pre-quantization. 
\end{definition}
In this definition, the $H^1_G(X,\Z_2)$-component of $\on{DD}_G(\E,\A)$ does not enter the condition 
of `integral lift'. In our applications, the $H^1(X,\Z_2)$-component will in fact be zero. 
\begin{remarks}
\begin{enumerate}
\item In particular, $\on{DD}_G(\A)$ must be an integral lift of $[\eta_G]$. 
Since
$\eta_G$ is defined in terms of the bilinear form $B$ on $\g$, this introduces an integrality condition on $B$.
\item 
Suppose $G=\pt$, so that the q-Hamiltonian space is just a symplectic manifold, and take $\A=\C$. A pre-quantization 
$(\Phi,\E)\colon (M,\C)\da (\pt,\C)$ is given by a $G$-equivariant pre-quantum line bundle $\E=L$. Indeed, in this case 
$H^3(\Phi,\Z)\cong H^2(M,\Z)$, and $\on{DD}_G(\E,\C)$ corresponds to the Chern class of $L$ under this isomorphism.
\item \label{it:d}
If $(\E,\A)$ is a pre-quantization, and $L\to M$ is a $G$-equivariant line bundle, then $(\E\otimes L,\A)$ is a 
pre-quantization if and only if the class $\on{DD}_G(L,\C)\in H^3_G(\Phi,\Z)$ is torsion. This class is the image of 
the equivariant Chern class $c_{1,G}(L)\in H^2_G(M,\Z)$ under the connecting homomorphism in the long exact sequence, 
\[ \cdots\to H^2_G(G,\Z) \to H^2_G(M,\Z)\to H^3_G(\Phi,\Z) \to H^3_G(G,\Z)\to H^3_G(M,\Z)\to\cdots\]
For $G$ semi-simple, the group $H^2_G(G,\Z)$ is torsion, and hence $c_{1,G}(L)$ must be torsion. That is, 
pre-quantizations with given $\A$ are unique up to flat $G$-equivariant line bundles.  
\end{enumerate}
\end{remarks}

\begin{remark}
While Definition \ref{def:preq} makes sense for \emph{any} compact $G$, it is not quite 
satisfactory if $G$ is not semi-simple. The problem becomes apparent 
if one attempts to define the pre-quantization of ordinary Hamiltonian $G$-spaces $(M,\om_0,\Phi_0)$ along similar lines. Suppose for simplicity that $H^3_G(\pt,\Z)=0$, i.e. $G$ has no non-trivial central extensions by $\U(1)$. 
Then the long exact sequence in relative cohomology shows that $H^3_G(\Phi_0,\Z)\cong H^2_G(M,\Z)/H^2_G(\pt,\Z)$. 
Thus, a class in $H^3_G(\Phi_0,\Z)$ does not quite determine a $G$-equivariant line bundle unless $G$ is semi-simple, in which case $H^2_G(\pt,\Z)=\on{Hom}(G,\U(1))$ is zero. 
\end{remark}

\begin{proposition}[Functoriality of pre-quantization]
Let $\varphi=(\varphi_M,\varphi_G)$ be a morphism from a q-Hamiltonian $G$-space $(M,\om,\Phi)$ to a q-Hamiltonian 
$G'$-space $(M',\om',\Phi')$, where $G,G'$ is again semi-simple. 
If $(\E',\A')$ is a pre-quantization of $(M',\om',\Phi')$ then $(\varphi_M^*\E',\varphi_G^*\A')$ is a 
pre-quantization of $(M,\om,\Phi)$. 
\end{proposition}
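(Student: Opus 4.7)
The plan is to check that all the data defining a pre-quantization pulls back naturally under the morphism $\varphi=(\varphi_M,\varphi_G)$.

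First I would produce the relative DD bundle on the $M$ side. The pre-quantization $(\E',\A')$ of $M'$ is encoded in a Morita morphism $(\Phi',\E')\colon (M',\C)\da (G',\A')$. The identity $\Phi'\circ\varphi_M=\varphi_G\circ \Phi$ from the morphism axiom \eqref{eq:morphism} lets me pull this morphism back along $\varphi$: composing with $(\varphi_M,\C)$ and $(\varphi_G,\varphi_G^*\A')$ (or equivalently just pulling the bimodule $\E'$ back via $\varphi_M$) yields a Morita morphism
\[
(\Phi,\varphi_M^*\E')\colon (M,\C)\da (G,\varphi_G^*\A').
\]
This already exhibits $(\varphi_M^*\E',\varphi_G^*\A')$ as a relative $G$-DD bundle for $\Phi$.

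Next I would verify the class identity $\varphi^*[(\om',\eta_{G'})]=[(\om,\eta_G)]$ in $H^3_G(\Phi,\R)$. On the $\Om^2$ level one has $\varphi_M^*\om'=\om$ by hypothesis. On the $\Om^3$ level I need $\varphi_G^*\eta_{G'}=\eta_G$, which follows because a Lie group homomorphism intertwines Maurer--Cartan forms, $\varphi_G^*\theta^L_{G'}=\d\varphi_G\circ\theta^L_G$, and $\d\varphi_G$ preserves the bilinear forms by assumption; the explicit formulae
\[
\eta=\tfrac{1}{12}B(\theta^L,[\theta^L,\theta^L]),\qquad \eta_G(\xi)=\eta-\tfrac12 B(\theta^L+\theta^R,\xi)
\]
then give the identity equivariantly. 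Together with $\varphi_M^*\om'=\om$ this shows that the cocycle $(\om',\eta_{G'})$ pulls back to $(\om,\eta_G)$ in the relative Cartan complex for $\Phi$, and hence the same holds in cohomology.

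Finally I would invoke the naturality of the relative equivariant DD class stated in Section~\ref{subsubsec:rel}: under the map $\tau=(\varphi_M,\varphi_G)$ of pairs one has
\[
\on{DD}_G(\varphi_M^*\E',\varphi_G^*\A')=\varphi^*\on{DD}_{G'}(\E',\A').
\]
Since by hypothesis $\on{DD}_{G'}(\E',\A')\in H^3_{G'}(\Phi',\Z)$ is an integral lift of $[(\om',\eta_{G'})]\in H^3_{G'}(\Phi',\R)$, and since pull-back commutes with the coefficient homomorphism $\Z\to\R$, the class $\on{DD}_G(\varphi_M^*\E',\varphi_G^*\A')$ is an integral lift of $[(\om,\eta_G)]$, which is exactly the pre-quantization condition of Definition~\ref{def:preq}. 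The only place where genuine care is needed is the compatibility check in step two, as one must verify the equality at the level of relative cocycles rather than merely of absolute classes; this is the sole non-formal point, but it reduces to the two elementary naturality statements for $\om$ and $\eta_G$ just recorded.
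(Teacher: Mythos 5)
Your argument is correct and is, in essence, the same one the paper gives in its one-line proof. The paper's proof simply records that $\on{DD}_G(\varphi_M^*\E',\varphi_G^*\A')=\varphi^*\on{DD}_{G'}(\E',\A')$ is an integral lift of $\varphi^*[(\om',\eta_{G'})]=[(\varphi_M^*\om',\varphi_G^*\eta_{G'})]$, tacitly using $\varphi_M^*\om'=\om$ and $\varphi_G^*\eta_{G'}=\eta_G$ (the latter because a q-Hamiltonian morphism preserves the bilinear form); you spell out that last identity via the Maurer--Cartan intertwining, which is a harmless and correct elaboration rather than a different route.
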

\begin{proof}
$\on{DD}_G(\varphi_G^* \E',\varphi_M^*\A')=\varphi_G^*\on{DD}_{G'}(\E',\A')$ is an integral lift of
$[(\varphi_M^*\om',\varphi_G^*\eta_G')]=\varphi^*[(\om',\eta_G')]$. 
\end{proof}

For instance, if $(M,\om,\Phi)$ is a pre-quantized q-Hamiltonian $G$-space, and $a\in G$, then 
the fixed point set $M^a$ is a pre-quantized q-Hamiltonian $G^a_{ss}$-space, where $G^a_{ss}$ is the semi-simple part (commutator subgroup) of $G^a$.   

For the remainder of this section, we will assume that our compact group $G$ is connected and semi-simple. We 
also assume that we are given a fixed \emph{multiplicative} $G$-DD bundle $\A\to G$ such that $\on{DD}_G(\A)$ is an integral lift of the class $[\eta_G]$ defined by the given bilinear form $B$ on $\g$.

\begin{proposition}[Fusion]\label{prop:fusions}
If $(\E,\A\times \A)$ is a pre-quantization of a q-Hamiltonian 
$G\times G$-space $(M,\om,\Phi)$, then 
\begin{equation}\label{eq:fussion} (\Phi_{\on{fus}},\E_{\on{fus}})=(\on{Mult},\F_{\on{Mult}})\circ (\Phi,\E)
\end{equation}
defines a pre-quantization of the fusion, $(M_{\on{fus}},\om_{\on{fus}},\Phi_{\on{fus}})$. 
\end{proposition}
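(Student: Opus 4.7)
The plan is to verify both ingredients of Definition \ref{def:preq} for the composed data $(\Phi_{\on{fus}},\E_{\on{fus}})$. First, by the composition rule of Section \ref{subsubsec:mormor}, the composition $(\on{Mult},\F_{\on{Mult}})\circ(\Phi,\E)$ is automatically a well-defined Morita trivialization $(M,\C)\da(G,\A)$ with underlying map $\on{Mult}\circ\Phi=\Phi_{\on{fus}}$. So we have a legitimate relative $G$-DD bundle for $\Phi_{\on{fus}}$, and the only substantive issue is whether its relative DD class is an integral lift of $[(\om_{\on{fus}},\eta_G)]$.

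The key input is the classical de Rham identity
\[ d_G\sig \;=\; \on{Mult}^*\eta_G-\pr_1^*\eta_G-\pr_2^*\eta_G \]
on $G\times G$ (equivariant for the diagonal $G$-action), verified by a direct Cartan-calculus computation from the definitions of $\eta_G$ and $\sig$. It says that $(\sig, 0)$ is a relative equivariant cocycle for the map $\on{Mult}$, representing an equivariant class in $H^3_G(\on{Mult},\R)$ whose integral lift is precisely the relative class associated to the Morita morphism $\F_{\on{Mult}}$.

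I would then invoke the principle that composition of Morita morphisms corresponds to addition of relative DD classes, under the evident pullback/composition maps in relative cohomology. Applied to the chain $M\xra{\Phi}G\times G\xra{\on{Mult}}G$, this gives
\[ \on{DD}_G(\E_{\on{fus}},\A) \;=\; \on{DD}_G(\E,\A\times\A)+\Phi^*\on{DD}_G(\F_{\on{Mult}}) \;\in\; H^3_G(\Phi_{\on{fus}},\Z), \]
whose image in real cohomology is $[(\om,\pr_1^*\eta_G+\pr_2^*\eta_G)]+\Phi^*[(\sig,0)] = [(\om+\Phi^*\sig,\on{Mult}^*\eta_G)] = [(\om_{\on{fus}},\eta_G)]$, as required. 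Integrality is preserved by the sum.

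The main obstacle is formulating precisely the composition-to-addition rule for relative DD classes in the equivariant $\Z_2$-graded context. This is the technical bookkeeping that translates the geometric operation of tensoring Morita bimodules into the additive structure on the relative equivariant cohomology groups of Appendix \ref{app:rel}. Once this correspondence is available, the remaining verification reduces to the de Rham identity displayed above.
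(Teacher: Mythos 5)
The key geometric input you identify, the de Rham identity $\d_G\sig = \on{Mult}^*\eta_G - \pr_1^*\eta_G - \pr_2^*\eta_G$, is correct and is exactly what drives the paper's proof as well. But the step you frame as ``technical bookkeeping'' is a genuine gap, and the central formula you display does not type-check: $\on{DD}_G(\E,\A\times\A)$ lives in $H^3_G(\Phi,\Z)$, the relative group for $\Phi\colon M\to G\times G$, whereas $\on{DD}_G(\E_{\on{fus}},\A)$ lives in $H^3_G(\Phi_{\on{fus}},\Z)$, the relative group for $\Phi_{\on{fus}}\colon M\to G$, and there is no natural map carrying the first group into the second. In addition, $\F_{\on{Mult}}$ is a Morita morphism between two non-trivial DD bundles, so ``$\on{DD}_G(\F_{\on{Mult}})$'' is not defined by the framework of Section \ref{subsubsec:rel}, which assigns relative DD classes only to Morita trivializations (domain algebra $\C$). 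The same mismatch appears at the end of your chain of equalities, where $[(\om+\Phi^*\sig,\on{Mult}^*\eta_G)]$ involves a form on $G\times G$ and so belongs to $H^3_G(\Phi,\R)$, while $[(\om_{\on{fus}},\eta_G)]$ involves a form on $G$ and belongs to $H^3_G(\Phi_{\on{fus}},\R)$.

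The paper closes this gap by running the comparison in the opposite direction. It observes that both the integral class and the real class pull back compatibly under the natural map $\on{Mult}^*\colon H^3_G(\Phi_{\on{fus}})\to H^3_G(\Phi)$, namely $\on{DD}_G(\E,\A\times\A)=\on{Mult}^*\on{DD}_G(\E_{\on{fus}},\A)$ and $[(\om,\pr_1^*\eta_G+\pr_2^*\eta_G)]=\on{Mult}^*[(\om_{\on{fus}},\eta_G)]$ (this second identity is where your $\sig$-identity enters), and then proves that $\on{Mult}^*$ is injective with real coefficients by a diagram chase between the two long exact sequences, using that $\on{Mult}^*\colon H^3_G(G,\R)\to H^3_G(G\times G,\R)$ is split by pull-back along $g\mapsto (g,e)$. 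That injectivity lemma is the concrete ingredient your sketch is missing; once it is in hand the claim follows without ever needing a cocycle-level ``composition equals addition'' rule for Morita morphisms between non-trivial DD bundles.
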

\begin{proof}
Since 
\[ \on{DD}_G(\E,\A\times \A)=\on{Mult}^*\on{DD}_G(\E_{\on{fus}},\A),\ \ \ 
[(\om,\pr_1^*\eta_G+\pr_2^*\eta_G)]=\on{Mult}^*[(\om,\eta_G)]\]
 it suffices to show that the map 
\begin{equation}\label{eq:injective}
\Mult^*\colon H^3_G(\Phi_{\on{fus}},\R)\to H^3_G(\Phi,\R)\end{equation} 
is injective. 
 This map fits into a  commutative diagram, with exact rows,
\[\begin{CD}
  0 @>>> H^2_G(M,\R) @>>> H^3_G(\Phi_{\on{fus}},\R)@>>> H^3_G(G,\R) \\
  @VVV @VVV @VVV @VV{\on{Mult}^*}V \\
  0 @>>> H^2_{G}(M,\R) @>>> H^3_{G}(\Phi,\R)@>>> H^3_{G}(G\times G,\R) \\
\end{CD} 
 \]
 The right vertical map is injective;  a left-inverse is the pull-back under 
 $G\to G\times G,\ g\mapsto (g,e)$.  By a diagram chase it follows that 
\eqref{eq:injective}   is injective.
\end{proof}
\begin{proposition}[Conjugates] \label{prop:conjugates}
Suppose  $(\E,\A)$ is a pre-quantization of the q-Hamiltonian $G$-space
$(M,\om,\Phi)$. Then 
\[ (\Phi^*,\E^*):=(\on{Inv},\F_{\on{Inv}})\circ (\Phi,\ol{\E})
\colon (M,\C)\da (G,\A)
\]
defines a pre-quantization $(\E^*,\A)$ of its conjugate $(M^*,\om^*,\Phi^*)$.
\end{proposition}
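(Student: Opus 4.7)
My plan is to verify the two defining properties of a pre-quantization separately: (a) that $(\Phi^*,\E^*)$ is a well-defined $G$-equivariant Morita morphism $(M,\C)\da (G,\A)$ with underlying map $\Phi^*=\on{Inv}\circ\Phi$, and (b) that its DD class is an integral lift of $[(\om^*,\eta_G)]\in H^3_G(\Phi^*,\R)$.

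For (a), I would invoke the conjugation operation of Section~\ref{subsubsec:mormor}: the given pre-quantization yields a Morita morphism $(\Phi,\ol{\E})\colon (M,\C^{\on{op}})\da (G,\A^{\on{op}})$, and since $\C^{\on{op}}=\C$, composition with the Morita isomorphism $(\on{Inv},\F_{\on{Inv}})\colon (G,\A^{\on{op}})\da (G,\A)$ from \eqref{eq:incinv} produces a Morita morphism of the desired form, with $\E^*=\Phi^*\F_{\on{Inv}}\otimes_{\Phi^*\A^{\on{op}}}\ol{\E}$ and underlying map $\on{Inv}\circ\Phi=\Phi^*$.

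For (b), note that $(\on{id}_M,\on{Inv})$ is a morphism of pairs from $\Phi^*$ to $\Phi$ (since $\on{Inv}\circ\Phi^*=\Phi$), hence induces a pullback $\on{Inv}^*\colon H^3_G(\Phi,-)\to H^3_G(\Phi^*,-)$ in relative cohomology. Using the two functorial properties from Section~\ref{subsubsec:rel} -- namely $\on{DD}(\ol{\E},\A^{\on{op}})=-\on{DD}(\E,\A)$ and naturality of DD classes under morphisms of pairs -- I would obtain
\[ \on{DD}_G(\E^*,\A)=-\on{Inv}^*\on{DD}_G(\E,\A)\in H^3_G(\Phi^*,\Z).\]
On the de Rham side, bi-invariance of $\eta$ gives $\on{Inv}^*\eta_G=-\eta_G$, hence
\[ -\on{Inv}^*[(\om,\eta_G)]=-[(\om,-\eta_G)]=[(-\om,\eta_G)]=[(\om^*,\eta_G)].\]
Since $\on{DD}_G(\E,\A)$ is an integral lift of $[(\om,\eta_G)]$ by hypothesis, this shows that $\on{DD}_G(\E^*,\A)$ is an integral lift of $[(\om^*,\eta_G)]$, proving the proposition.

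The main technical obstacle is matching signs and pullback conventions coherently: the effect of the conjugation $\ol{\E}$ (which negates the DD class) must combine correctly with composition with $(\on{Inv},\F_{\on{Inv}})$ (which realizes the pullback by $\on{Inv}$), so that the combined effect reproduces $[(\om^*,\eta_G)]=[(-\om,\eta_G)]$ rather than some other sign combination. Apart from this bookkeeping, the argument is formal.
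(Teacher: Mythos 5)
Your proposal is correct and follows essentially the same route as the paper: you use $\on{DD}_G(\ol{\E},\A^{\op})=-\on{DD}_G(\E,\A)$, compose with $(\on{Inv},\F_{\on{Inv}})$ to realize the pullback $\on{Inv}^*$ on relative cohomology, and track the signs via $\on{Inv}^*\eta_G=-\eta_G$, arriving at $[(-\om,\eta_G)]=[(\om^*,\eta_G)]$ exactly as in the paper's one-line computation. Your explicit verification in part (a) that $(\Phi^*,\E^*)$ is a well-defined Morita morphism is a slight elaboration that the paper leaves implicit, but introduces nothing new.
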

\begin{proof}
The class $\on{DD}_G(\ol{\E},\A^{\on{op}})=-\on{DD}_G(\E,\A)$ is an integral lift of 
$-[(\om,\eta_G)]=[(-\om,\on{Inv}^*\eta_G)]$. Hence, by composing with 
$(\on{Inv},\F_{\on{Inv}})$ (cf.~ \eqref{eq:incinv}) we obtain an integral lift of 
$\on{Inv}^*[(-\om,\on{Inv}^*\eta_G)]=[(-\om,\eta_G)]\in H^3_G(\Phi^*,\R)$.   
\end{proof}
 
The Morita morphism $(i,\F_i)\colon (\pt,\C)\da (G,\A)$ 
extends to a $G$-equivariant Morita morphism  
\[ (\exp,\F_{\on{exp}})\colon (\g,\C)\da (G,\A)\]
lifting the exponential map. 

\begin{proposition}[Exponentials]\label{prop:exxponentials}
Suppose $(M,\om,\Phi)$ is the exponential of a Hamiltonian $G$-space $(M,\om_0,\Phi_0)$, as in Section 
\ref{subsubsec:exp}, and that $L\to M$ is a $G$-equivariant pre-quantum line bundle over $M$. Then the composition  
\begin{equation}\label{eq:exxp}
 (\Phi,\E)=(\exp,\F_{\exp})\circ (\Phi_0,L)\colon (M,\C)\da (G,\A)\end{equation}
 is a pre-quantization of $(M,\om,\Phi)$. Conversely, if $(\E,\A)$ is a pre-quantization 
 of $(M,\om,\Phi)$,  
 %so that $(\Phi_0,\E)\colon (M,\C)\da (\g,\exp^*\A)$, 
 then the line bundle $L\to M$ defined by 
\[(\Phi_0,L)=(\on{id}_\g,\ol{\F}_{\exp})\circ (\Phi_0,\E)\colon (M,\C)\da (\g,\C)\]
is a pre-quantum line bundle for $(M,\om_0,\Phi_0)$.
\end{proposition}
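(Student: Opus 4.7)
The plan is to verify the DD-class condition by tracking how relative DD classes transform under composition of Morita morphisms. For the forward direction I would interpret each piece as a relative DD bundle and identify its real cohomology class: the pre-quantum line bundle $L$ is the relative DD bundle $(L,\C)$ for $\Phi_0\colon M\to\g$, whose real relative class lifts $[\om_{0,G}]$ (the equivariant extension of $\om_0$), and the lifted exponential morphism $(\exp,\F_{\exp})\colon(\g,\C)\da(G,\A)$ is the relative DD bundle for $\exp\colon\g\to G$ whose real relative class is represented by $(\varpi_G,\eta_G)$, where $\varpi_G$ is an equivariant extension of $\varpi$ satisfying $\d_G\varpi_G=-\exp^*\eta_G$. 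By functoriality of the DD class under composition (as spelled out in Section \ref{subsubsec:rel}), the relative class of the composition $(\Phi,\E)=(\exp,\F_{\exp})\circ(\Phi_0,L)$ is then the sum $[(\om_{0,G}+\Phi_0^*\varpi_G,\eta_G)]=[(\om_G,\eta_G)]$, using the defining formula $\om=\om_0+\Phi_0^*\varpi$ from Section \ref{subsubsec:exp}. This class is integral by construction, so $(\Phi,\E)$ is a pre-quantization.

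The converse runs the same calculation in reverse. Using the conjugate $(\exp,\ol{\F}_{\exp})$, which carries the opposite relative DD class, one inverts the exponential Morita step and obtains from $(\Phi,\E)$ a Morita trivialization over $\g$, i.e.~a line bundle $L\to M$ with real relative class in $H^3_G(\Phi_0,\R)$ equal to $[(\om_G,\eta_G)]-[(\Phi_0^*\varpi_G,\eta_G)]=[(\om_{0,G},0)]$; this is precisely the pre-quantum condition for the Hamiltonian space $(M,\om_0,\Phi_0)$.

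The main technical point is identifying the real relative class of $(\exp,\F_{\exp})$ with $[(\varpi_G,\eta_G)]$. Since $G$ is semi-simple and connected, one has $H^2_G(\g,\R)=H^3_G(\g,\R)=0$ by the equivariant homotopy equivalence $\g\simeq\pt$ together with the vanishing of $H^{2,3}(BG,\R)$ for semi-simple $G$; the long exact sequence in relative equivariant cohomology then gives $H^3_G(\exp,\R)\cong H^3_G(G,\R)$, so the real class of $(\exp,\F_{\exp})$ is uniquely determined by its target component $[\eta_G]$, and $(\varpi_G,\eta_G)$ provides a concrete cocycle representative. With this identification in hand, the remainder of the argument reduces to the formal additivity of DD classes under composition of Morita morphisms, together with the explicit form of $\om$ from the exponential construction.
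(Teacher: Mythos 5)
Your plan is in the right spirit — track the real relative DD classes through the composition — but the crucial step is not justified. You assert that ``functoriality of the DD class under composition'' is ``spelled out in Section \ref{subsubsec:rel},'' but that section gives only the pullback formula $\on{DD}(\E_1,\A_1)=\tau^*\on{DD}(\E_2,\A_2)$ for a \emph{morphism} of relative DD bundles (a commutative square). It does not give an additivity formula for the DD class of a \emph{composition} $(\exp,\F_{\exp})\circ(\Phi_0,L)$ of Morita trivializations with different targets; indeed the three classes live in three distinct groups $H^3_G(\Phi_0)$, $H^3_G(\exp)$, $H^3_G(\Phi)$, and one cannot literally ``sum'' them without first setting up the right maps.

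What the paper actually does is the following, and your argument should do the same (or something equivalent). The identity \eqref{eq:exxp} exhibits a morphism of relative DD bundles from $(L,\C)$ (over $\Phi_0\colon M\to\g$) to $(\E,\A)$ (over $\Phi\colon M\to G$), covering the pair $\tau=(\id_M,\exp)$. The pullback formula then gives $\exp^*\on{DD}_G(\E,\A)=\on{DD}_G(L,\C)$ in $H^3_G(\Phi_0,\Z)$. This, together with the fact that the image of $\on{DD}_G(\E,\A)$ in $H^3_G(G,\Z)$ is $\on{DD}_G(\A)$, pins down $\on{DD}_G(\E,\A)$ once one knows that the map
$\wt{H}^3_G(\Phi,\Z)\hra H^2_G(M,\Z)\oplus H^3_G(G,\Z)$
(using $\exp^*$ followed by the isomorphism $\wt{H}^3_G(\Phi_0,\Z)\cong H^2_G(M,\Z)$) is injective — which is where the semi-simplicity hypothesis enters via $H^2_G(\pt,\Z)=0$. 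Your ``additivity'' identity $\on{DD}_G(\E,\A)_\R=[(\om_{0,G}+\Phi_0^*\varpi_G,\eta_G)]$ \emph{is} a correct consequence of this, precisely because the two components $\exp^*$ and restriction to $H^3_G(G)$ jointly detect the class — but you need to say this, not cite a nonexistent statement. Your identification of $[(\varpi_G,\eta_G)]$ as a cocycle representative for the class of $(\exp,\F_{\exp})$ via the isomorphism $H^3_G(\exp,\R)\cong H^3_G(G,\R)$ is fine, though it is not really the crux; the crux is the injectivity just described.
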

\begin{proof}
Since $G$ is semi-simple, we have $H^2_G(\pt,\Z)=0$, while $H^3_G(\pt,\Z)$ is the torsion group of central extensions
of $G$ by $\U(1)$. 
Let $\wt{H}^3_G(\Phi_0,\Z)$ be the kernel of the map $H^3_G(\Phi_0,\Z)\to H^3_G(\g,\Z)\cong H^3_G(\pt,\Z)$, and let
$\wt{H}^3_G(\Phi,\Z)$ denote the kernel of the map $H^3_G(\Phi,\Z)\to H^3_G(G,\Z)\to H^3_G(\pt,\Z)$. 
By the long exact sequence 
\[ \cdots\to H^2_G(\g,\Z)\to H^2_G(M,\Z)\to H^3_G(\Phi_0,\Z)\to H^3_G(\g,\Z)\to \cdots\]
and since $H^2_G(\g,\Z)=H^2_G(\pt,\Z)=0$, we have $\wt{H}^3_G(\Phi_0,\Z)\cong H^2_G(M,\Z)$.
The exponential map gives $\exp^*\colon \wt{H}^3_G(\Phi,\Z)\to \wt{H}^3_G(\Phi_0,\Z)\cong H^2_G(M,\Z)$. Together with the map to $H^3_G(G,\Z)$
this gives an inclusion 
\begin{equation}
\label{eq:phiso}
 \wt{H}^3_G(\Phi,\Z)\hra H^2_G(M,\Z)\oplus H^3_G(G,\Z).
\end{equation}
Since $\A$ is multiplicative, the $H^3$-component of its DD class $\on{DD}_G(\A)$ lies in the kernel of 
$H^3_G(G,\Z)\to H^3_G(\pt,\Z)$. Thus 
\[ \on{DD}_G(\E,\A)'\in \wt{H}^3_G(\Phi,\Z),\]
where the prime $'$ indicates the $H^3_G(\cdot,\Z)$-component 
(the $H^1_G(\cdot,\Z_2)$-component does not play a role here). 
Equation \eqref{eq:exxp} gives $\exp^*\on{DD}_G(\E,\A)'=\on{DD}_G(L,\C)'$ which is identified with the Chern class 
$c_{1_G}(L)$ under $\wt{H}^3_G(\Phi_0,\Z)\cong H^2_G(M,\Z)$. That is, 
$\on{DD}_G(\E,\A)'$ maps to $c_{1_G}(L)\oplus \on{DD}_G(\A)'$ under the inclusion
\eqref{eq:phiso}. On the other hand, the class 
$[(\om,\eta_G)]\in \wt{H}^3_G(\Phi,\R)=H^3_G(\Phi,\R)$ maps to $[\om_{0,G}]\oplus [\eta_G]$ 
under a similar isomorphism for real coefficients. It follows that $c_{1,G}(L)$ is an integral lift 
of $[\om_{0,G}]$ if and only if $\on{DD}_G(\E,\A)'$ is an integral lift of 
$[(\om,\eta_G)]$. 
\end{proof}

\begin{proposition}[Reduction]
Suppose $(\E,\A)$ is a pre-quantization of the q-Hamiltonian $G$-space $(M,\om,\Phi)$. 
Assume that $e$ is a regular value of $\Phi$ and that $G$ acts freely on $Z=\Phinv(e)$. 
Then the reduced space $(M_\red,\om_\red,\Phi_\red)$ inherits a pre-quantization. 
Indeed, defining the $G$-equivariant line bundle 
$L_Z\to Z$ by 
\[ (\Phi|_Z,L)=(\on{id}_{\pt}, \ol{\F}_i)\circ (\Phi|_Z,\E|_Z)\colon (Z,\C)\da (\pt,\C),\]
the quotient $L_\red=L_Z/G$ is a pre-quantum line bundle.  If the action of $G$ on $\Phi^{-1}(e)$ is only locally free,
one gets a similar statement with $L_{\on{red}}$ an orbifold line bundle. 
\end{proposition}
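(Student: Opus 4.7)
The plan is to construct $L_Z$ and its descent $L_\red$ explicitly, then verify via a diagram in relative equivariant cohomology that $c_1(L_\red)$ is an integral lift of $[\om_\red]$; by Remark~(ii) after Definition~\ref{def:preq} (applied with the group there equal to a point), this is exactly what it means for $L_\red$ to be a pre-quantum line bundle on the symplectic manifold $(M_\red,\om_\red)$.

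First, because $\Phi|_Z$ is the constant map at $e$, the restriction $(\Phi|_Z,\E|_Z)\colon (Z,\C)\da (G,\A)$ factors through $(\pt,\A_e)$, and composing with the Morita inverse $(\id_\pt,\ol{\F}_i)$ of $(i,\F_i)$ produces a Morita trivialization $(Z,\C)\da (\pt,\C)$, which is precisely the datum of a $G$-equivariant line bundle $L_Z\to Z$. Since $G$ acts freely on $Z$, its lift acts freely on $L_Z$, so $L_\red=L_Z/G$ is an ordinary line bundle on $M_\red=Z/G$; in the locally free case the same construction in the orbifold category produces an orbifold line bundle.

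Next I would identify $c_1(L_\red)$ with $[\om_\red]$ in real cohomology. The q-Hamiltonian moment identity $\iota_\xi\om=\hh\Phi^*B(\theta^L+\theta^R,\xi)$, pulled back to $Z$, vanishes because $\Phi|_Z$ is constant, so $\om|_Z$ is $G$-basic and descends to $\om_\red$. On the other hand, the class $\on{DD}_G(\E,\A)$ restricts to the class of $(\E|_Z,\A|_{\{e\}})$ in $H^3_G(\Phi|_Z,\Z)$, and composition with $(\id_\pt,\ol{\F}_i)$ identifies it with the relative DD class of $L_Z$ for the constant map $Z\to\pt$. The long exact sequence in relative cohomology
\[ \cdots\to H^2_G(\pt,\R)\to H^2_G(Z,\R)\to H^3_G(\Phi|_Z,\R)\to H^3_G(\pt,\R)\to\cdots \]
(and its integral analogue) now applies: for $G$ compact connected semi-simple, $H^i_G(\pt,\R)=0$ for $i=1,2,3$, so the middle arrow is an isomorphism over $\R$, and freeness of the $G$-action on $Z$ identifies $H^2_G(Z,\R)$ with $H^2(M_\red,\R)$. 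Under these identifications, $[(\om|_Z,\eta_G)]$ corresponds to $[\om_\red]$ while $\on{DD}_G(L_Z,\C)$ corresponds to $c_1(L_\red)$, so integrality of the pre-quantization class forces $c_1(L_\red)$ to be an integral lift of $[\om_\red]$.

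The main technical obstacle is the commutativity of the diagram relating pull-back along $Z\hra M$, Morita composition with $\ol{\F}_i$, and the connecting homomorphism in the long exact sequence above; concretely, one must verify (with correct signs) that composition of a relative DD bundle for $\Phi|_Z$ with the Morita trivialization $\F_i$ of $\A|_{\{e\}}$ genuinely realizes the connecting map $H^2_G(Z,\cdot)\to H^3_G(\Phi|_Z,\cdot)$, simultaneously for the integer DD class and for the de Rham representative $(\om|_Z,\eta_G)$.
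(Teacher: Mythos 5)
Your proof is correct and essentially follows the paper's own argument. The paper works with the relative group for $\Phi|_Z\colon Z\to G$ and the subgroup $\wt H^3_G(\Phi|_Z,\Z)\hookrightarrow H^2_G(Z,\Z)\oplus H^3_G(G,\Z)$ (mirroring the proof of the Exponentials proposition), while you collapse directly to the relative group for $p_Z\colon Z\to\pt$ and use $H^2_G(\pt,\R)=H^3_G(\pt,\R)=0$ to get the isomorphism $H^2_G(Z,\R)\cong H^3_G(p_Z,\R)$; these are the same computation in slightly different packaging, and you also carry the identification one step further through $H^2_G(Z,\cdot)\cong H^2(M_\red,\cdot)$, which the paper leaves implicit. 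Two small remarks: the moment map identity has a sign, $\iota_{\A(\xi)}\om=-\hh\Phi^*B(\theta^L+\theta^R,\xi)$, though this does not affect the conclusion $\om|_Z$ is basic; and the "technical obstacle" you flag (that Morita composition with $\ol\F_i$ realizes the connecting map and is compatible with the de Rham representatives) is indeed where the real content sits, but the paper treats it at the same level of detail as you do.
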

\begin{proof}
We use the notation from the proof of Proposition \ref{prop:exxponentials}. 
By a similar argument, the pull-back to $Z$ 
gives an inclusion $\wt{H}^3_G(\Phi|_Z,\Z)\subset H^2_G(Z,\Z)\oplus H^3_G(G,\Z)$, taking $\on{DD}_G(\E,\A)'$  to 
$c_{1,G}(L|_Z)\oplus \on{DD}_G(\A)'$. A similar map with real coefficients takes 
$[(\om,\eta_G)]$ to $[(\om|_Z)]\oplus [\eta_G]$. Hence $c_{1,G}(L|_Z)$ is an integral lift of 
$[\om|_Z]$. 
\end{proof}

\subsection{Simply connected case}\label{subsec:pi10}
If $G$ is not only semi-simple but also simply connected, the discussion of pre-quantization simplifies. One has $H^1(G,\Z)=H^1_G(G,\Z)=0,\ H^2(G,\Z)=H^2_G(G,\Z)=0$,
while 
\[H^3(G,\Z)=H^3_G(G,\Z)=\Z^N,\] 
where $N$ is the number of simple factors of $G$. As a consequence, the specific choice of the $G$-DD bundle 
$\A$ with $\on{DD}_G(\A)$ an integral lift of $[\eta_G]$ is unimportant: Any two choices are Morita isomorphic, where the 
Morita isomorphism is unique up to 2-isomorphism. Furthermore, it is automatic that $\A$ is multiplicative, 
with $\F_{\on{Mult}}$ unique up to 2-isomorphism. 

Suppose that $(M,\om,\Phi)$ is a q-Hamiltonian $G$-space. A pre-quantization amounts to an integral lift of the equivariant class $[(\om,\eta_G)]\in H^3_G(\Phi,\R)$. Since $H^2_G(G,\Z)=0$, the map $H^2_G(M,\Z)\to H^3_G(\Phi,\Z)$
is injective. Hence, if a pre-quantization exists, then it is unique up to a flat line bundle.  
The situation simplifies further, due to the following fact: 
\begin{lemma}\label{lem:dropg}
  Suppose $G$ is simply connected, and that $X$ is a $G$-space.
  Then the natural map $H^p_G(X)\to H^p(X)$ (arbitrary coefficients)
  is an isomorphism for $p\le 2$, and is injective for $p=3$.
  Moreover, for any equivariant map $\Phi\colon X\to G$ the map
  $H^3_G(\Phi)\to H^3(\Phi)$ is an isomorphism.
\end{lemma}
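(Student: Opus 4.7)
The plan is to derive the first assertion from the Serre spectral sequence of the Borel fibration $X\to X_G\to BG$, and then to get the relative statement by a five-lemma comparison of long exact sequences, with one extra input for the special case $X=G$ with conjugation.

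First, since $G$ is compact, connected and simply connected, one has $\pi_i(BG)=\pi_{i-1}(G)=0$ for $i\le 3$; in particular $BG$ is simply connected (no local coefficient system on the base), and by Hurewicz together with universal coefficients $H^p(BG;A)=0$ for $p=1,2,3$ and any abelian group $A$. Hence in the Leray--Serre spectral sequence $E_2^{p,q}=H^p(BG;H^q(X;A))\Rightarrow H^{p+q}_G(X;A)$ the three columns $p=1,2,3$ vanish identically. Consequently, for any $n\le 3$ the only $E_\infty^{p,q}$ with $p+q=n$ that can be nonzero is $E_\infty^{0,n}$, so the edge homomorphism factors as $H^n_G(X)\xrightarrow{\cong} E_\infty^{0,n}\hookrightarrow E_2^{0,n}=H^n(X)$. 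For $n\le 2$ every outgoing differential $d_r\colon E_r^{0,n}\to E_r^{r,n-r+1}$ vanishes (landing in a vanishing column when $r\le 3$ and in negative vertical degree when $r\ge 4$), so the edge map is an iso. For $n=3$ only the transgression $d_4\colon H^3(X;A)\to H^4(BG;A)$ survives, giving $E_\infty^{0,3}=\ker d_4\hookrightarrow H^3(X;A)$ and hence injectivity.

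Next I would sharpen this to an iso at $p=3$ for the special case $X=G$ with conjugation action. The key observation is that $e\in G$ is a $G$-fixed point, so the Borel fibration $G\to G_G\to BG$ admits a section $BG\to G_G$. This forces $p^*\colon H^4(BG;A)\to H^4(G_G;A)$ to be split injective. In the spectral sequence no earlier differential can alter $E_4^{4,0}=H^4(BG;A)$ (all candidate incoming terms live in the vanishing columns $p=1,2$), so split injectivity of $p^*$ in degree four forces the transgression $d_4\colon H^3(G;A)\to H^4(BG;A)$ to vanish. Combined with the first part, this yields $H^3_G(G;A)\xrightarrow{\cong} H^3(G;A)$.

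Finally, for $\Phi\colon X\to G$ equivariant I would compare the long exact sequences in (equivariant) relative cohomology:
\[
\begin{CD}
H^2_G(G) @>>> H^2_G(X) @>>> H^3_G(\Phi) @>>> H^3_G(G) @>>> H^3_G(X) \\
@VV{\cong}V @VV{\cong}V @VVV @VV{\cong}V @VV{\hookrightarrow}V \\
H^2(G) @>>> H^2(X) @>>> H^3(\Phi) @>>> H^3(G) @>>> H^3(X).
\end{CD}
\]
By the first part applied to both $X$ and $G$, together with the sharper $p=3$ statement for $X=G$, the first, second, and fourth vertical maps are isomorphisms and the fifth is injective; the five-lemma then gives that the middle map $H^3_G(\Phi)\to H^3(\Phi)$ is an iso. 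The most delicate step is the vanishing of $d_4$ for the conjugation fibration, which is precisely what forces us to use the extra structure afforded by the $G$-fixed point $e\in G$; everything else is formal bookkeeping in the spectral sequence and the five-lemma.
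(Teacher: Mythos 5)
Your proof is correct and takes the same route as the paper: the Serre spectral sequence for the Borel fibration gives the absolute comparison in degrees $\le 3$, and the five-lemma applied to the long exact sequences of the pair gives the relative statement. The one ingredient the paper leaves implicit (citing an external reference for the crucial isomorphism $H^3_G(G)\to H^3(G)$) you establish explicitly via the observation that the fixed point $e\in G$ provides a section of $G_G\to BG$, forcing the transgression $d_4\colon H^3(G)\to H^4(BG)$ to vanish; this is a correct and self-contained way to close that step.
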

\begin{proof} (See \cite{kre:pr}) The first claim follows from the Serre exact sequence for
  the fibration $X_G\to BG$. The second claim follows from the 5-Lemma
  applied to the maps between the long exact sequences for
  $H^3_G(\Phi)$ and $H^3(\Phi)$, using that the map $H^3_G(G)\to
  H^3(G)$ is an isomorphism.
\end{proof}
Hence, for $G$ simply connected we may define a pre-quantization to be an integral lift of the non-equivariant 
class $[(\om,\eta)]\in H^3(\Phi,\R)$ -- the extension to an equivariant class is automatic. 
The integrality of the relative class $[(\om,\eta)]$ can be expressed in terms of integration. 
(See Appendix \ref{app:rel}.)
Thus, a q-Hamiltonian $G$-space at integral level $k$ is pre-quantizable if and only if for every map $f\colon \Sigma\to M$ from a closed oriented surface $\Sigma$,          
          \[ \int_\Sigma f^*\omega-\int_{\Sigma\times[0,1]}h^*\eta\in\Z. \]
Here $h\colon \Sigma\times [0,1]\to G$ is any homotopy between $h_0=\Phi\circ f$ and $h_1=e$ (the constant map 
      to $e$). For instance, if $M$ is 2-connected, then $f$ itself is homotopic to a constant map, hence the condition is satisfied. 
      
Let $G=G_1\times\cdots \times G_N$ be the
decomposition into simple factors. For each $k=(k_1,\ldots,k_n)\in\R^n$, 
we denote by $B_k$ the unique invariant symmetric bilinear form on $\g$
whose restriction to $\g_i$ is the $k_i$-th multiple of the basic
inner product of $\g_i$. That is, if $T\subset G$ is a maximal torus,
and $\alpha^\vee\in\t\cap\g_i$ is a short co-root for $\g_i$, then
$B_k(\alpha^\vee,\alpha^\vee)=2 k_i$. Then the 3-form
$\eta\in\Om^3(G)$ defined using $B=B_k$ is integral if and only if all
$k_i$ are integers. If the inner product in our definition of
q-Hamiltonian $G$-space is $B=B_k$ (where $k_i>0$), we refer to $k$ as the
\emph{level} of our theory. Thus, a necessary condition for the
pre-quantizability of a q-Hamiltonian $G$-space at level $k$ is the
integrality of the level.

Let us examine the pre-quantizability of our main examples. 

\begin{proposition}
Suppose $G$ be a simply connected Lie group.
\begin{enumerate} 
\item (Conjugacy classes.) 
Let $T$ be a maximal torus in $G$, and let $\Lambda\subset\t$ be the integral lattice.  
The $G$-conjugacy class $\Co=G.\exp(\xi)$ for $\xi\in \t$ is
pre-quantized at level $k$ if and only if $B_k(\xi,\lambda)\in\Z$ for all $\lambda\in\Lambda$.  
\item (Moduli spaces.) The q-Hamiltonian $G^r$-space $M(\Sig_h^r)$ for $r\ge 1$ admits a pre-quantization
for any integer level $k$. 
\item (The sphere and $\mathbb{H} P(n)$.)
The q-Hamiltonian $\SU(n)$-space  for $n\ge 2$, and the q-Hamiltonian 
$\on{Sp}(n)$-space $\mathbb{H} P(n)$ for $n\ge 2$ admit pre-quantizations for any integer level $k$.
\end{enumerate} 
In all of these examples, the pre-quantization is unique up to 2-isomorphism.
\end{proposition}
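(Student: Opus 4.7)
The plan is to combine Lemma \ref{lem:dropg} with the integration criterion stated just after it: for simply connected $G$, a pre-quantization at level $k$ amounts to an integral lift of the non-equivariant class $[(\om,\eta)]\in H^3(\Phi,\R)$, equivalently,
\[
\int_\Sigma f^*\om - \int_{\Sigma\times[0,1]} h^*\eta \in \Z
\]
for every $f\colon\Sigma\to M$ from a closed oriented surface $\Sigma$ and every homotopy $h$ from $\Phi\circ f$ to the constant map $e$. In every case (a), (b), (c) the space $M$ is simply connected, so by the principle of Section \ref{subsec:pi10} uniqueness up to 2-isomorphism reduces to the non-existence of non-trivial flat $G$-equivariant line bundles, which in turn follows from $\pi_1(M)=0$ together with $G$ being semi-simple.

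For (c) and (b), the strategy is to show that $M$ is 2-connected, which makes the integration criterion automatic. Both $S^{2n}$ and $\mathbb{H}P(n)$ with $n\ge 2$ are visibly 2-connected. For $M(\Sig_h^r)$ with $r\ge 1$, I identify it with the representation variety $\Hom(\Pi_1(\Sig_h^r;x_1,\ldots,x_r),G)$ of the fundamental groupoid: since $\Sig_h^r$ has the homotopy type of a wedge of $2h+r-1$ circles and $r-1$ additional arcs are needed to connect the base points, this groupoid is \emph{free} on $2h+2r-2$ generators, yielding a diffeomorphism $M(\Sig_h^r)\cong G^{2h+2r-2}$; since $\pi_1(G)=\pi_2(G)=0$ for simply connected compact $G$, this space is 2-connected. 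For any 2-connected $M$, a given $f\colon\Sigma\to M$ null-homotopes via some $F\colon\Sigma\times[0,1]\to M$ with $F(\cdot,1)$ constant, and taking $h$ to be $\Phi\circ F$ concatenated with a path in $G$ (constant in $\Sigma$) from $\Phi\circ F(\cdot,1)$ to $e$, the second piece contributes $0$ (pullback of a 3-form through a 1-dimensional path), while Stokes' theorem combined with the q-Hamiltonian identity $\d\om+\Phi^*\eta=0$ forces the displayed quantity to vanish identically, a fortiori an integer.

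For (a), I treat first $\xi$ in the interior of the fundamental alcove, where $G_\xi=T=G_{\exp\xi}$ and $\d\exp$ is invertible along the coadjoint orbit $\Oo_\xi=G.\xi\subset\g\cong\g^*$ (using $B_k$ to identify $\g$ with $\g^*$). Proposition \ref{prop:exxponentials} identifies $(\Co,\om,\Phi)$ with the exponential of the Hamiltonian $G$-space $(\Oo_\xi,\om_{\Oo_\xi},\iota)$ and transfers pre-quantizations in both directions; Kostant's classical integrality criterion then says $\Oo_\xi$ is pre-quantizable at level $k$ iff $B_k(\xi,\cdot)\in\t^*$ is an integral weight of $T$, i.e.\ iff $B_k(\xi,\lambda)\in\Z$ for all $\lambda\in\Lambda$. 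For $\xi$ on the alcove boundary the exponential construction degenerates, but the integrality condition is unchanged; I would apply the integration criterion directly, using that generators of $\pi_2(\Co)\cong\pi_1(G_\xi)$ are represented by 2-spheres built from coroots $\alpha^\vee$ of $G$, and that the Alekseev-Malkin-Meinrenken formula for $\om|_\Co$ evaluates $\int_{S^2}f^*\om - \int_{S^2\times[0,1]} h^*\eta$ on these to the pairing $B_k(\xi,\alpha^\vee)$ (coroots already in $G_\xi$ pair trivially with $\xi$, so integrality on all coroots is equivalent to integrality on all of $\Lambda$).

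The main obstacle is case (a) on the alcove boundary: the exponential shortcut fails there, so one must identify the generators of $\pi_2(\Co)$ explicitly and verify that the AMM 2-form evaluates to the expected pairing on the corresponding 2-spheres. Everything else is essentially formal once the 2-connectedness in (b) and (c) is established, and the uniqueness clause reduces to the vanishing of $\pi_1(M)$ in each example.
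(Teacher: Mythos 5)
Your handling of (b), (c), and the uniqueness clause is correct. For (b)--(c) the paper itself proceeds slightly differently, pulling back the integral generator $y\in H^3(G,\Z)$ of $[\eta]$ via $\Phi$, observing $\Phi^*y$ is torsion and $H^3(M,\Z)$ is torsion-free (so $\Phi^*y=0$), and then using the long exact sequence with $H^2(M,\Z)=0$ to lift; you instead establish 2-connectedness and invoke the integration criterion stated just before the proposition. Both are sound, and your identification $M(\Sig_h^r)\cong G^{2h+2r-2}$ via the free fundamental groupoid on $2h+2r-2$ generators, together with $\pi_1(G)=\pi_2(G)=0$, correctly yields 2-connectedness. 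Your Stokes computation using $\Phi^*\eta=-\d\om$ to kill $\int_\Sigma f^*\om-\int_{\Sigma\times[0,1]}(\Phi\circ F)^*\eta$ for a null-homotopy $F$, plus the observation that the path-concatenation piece pulls $\eta$ back through a rank-one map and hence vanishes, is correct. Your uniqueness argument ($\pi_1(M)=0\Rightarrow H^2(M,\Z)$ torsion-free by universal coefficients, combined with $H^2_G(M,\Z)\cong H^2(M,\Z)$ from Lemma~\ref{lem:dropg}) is also correct and agrees in substance with the paper's one-line justification.

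For (a) your proposal has a genuine gap, which you yourself flag: the case $\xi$ on the boundary of the alcove. The reduction via Proposition~\ref{prop:exxponentials} and Kostant's criterion for $\Oo_{B_k^\flat(\xi)}\subset\g^*$ works only when $\d\exp$ has full rank along the coadjoint orbit, i.e., for $\xi$ in the interior of $\Alc$. On the boundary you are left with a plausibility sketch -- that generators of $\pi_2(\Co)\cong\pi_1(G_\xi)$ come from coroot 2-spheres, and that the relative cocycle $(\om,\eta)$ evaluates on them to $B_k(\xi,\alpha^\vee)$ -- but the actual evaluation of the AMM 2-form on these 2-cycles, which is the heart of the matter, is not carried out. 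The surrounding structural remarks are all correct (roots of $G_\xi$ pair to zero with $\xi$; for $G$ simply connected the coroot lattice equals $\Lambda$, so integrality on coroots suffices), but they don't substitute for the computation. Note that the paper itself does not prove (a) in-line either; it cites \cite{me:su}. To close the gap you would either carry out the explicit integral (reproducing that reference) or give a continuity/degeneration argument reducing boundary $\xi$ to nearby interior points, but as written the proposal leaves (a) unfinished.
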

\begin{proof}
Let $M$ be any of the q-Hamiltonian $G$-spaces in (b),(c).  Let $y\in H^3(G,\Z)$ 
be the unique integral lift of $[\eta]$. Since $\Phi^*[\eta]=0$, the class $\Phi^*y$ is torsion. 
But $H^3(M,\Z)$ is torsion-free for the spaces in (b), (c).  it follows that $\Phi^*y=0$, so that $y$ is the image 
of a class in $H^3(\Phi,\Z)$. Since $H^2(M,\Z)=0$ for the spaces in (b), (c), the long exact sequence in relative cohomology shows that the class $y$ is unique. It is automatic that its image in $H^3(\Phi,\R)$ is $[(\om,\eta)]$ (this being the unique pre-image of $[\eta]$).  For (a), see \cite{me:su}. The uniqueness part follows since in all of these examples, 
$H^2(M,\Z)$ has no torsion. 
\end{proof}

Part (a) shows that the pre-quantized conjugacy classes at level $k$ are indexed by the set $\Lambda^*_k$ 
of level $k$ weights (see Appendix \ref{app:fus}).  

\begin{example}
  Let $\Co_1,\ldots,\Co_r\subset G$ be pre-quantized conjugacy
  classes, and $h\ge 0$. Then the fusion product $D(G)^h\times
  \Co_1\times\cdots\times \Co_r$ is pre-quantized. One hence obtains a
  pre-quantization of its symplectic quotient, the moduli space of
  flat $G$-bundles $M(\Sig_h^r,\Co_1,\ldots,\Co_r)$. (In most cases,
  the symplectic quotient has orbifold singularities, or even worse
  singularities. Pre-quantization of singular symplectic quotients is
  discussed in \cite{me:si}.)
\end{example}

\section{The quantization map}
In this Section, we will define the quantization of a pre-quantized q-Hamiltonian $G$-space, as a push-forward 
in twisted equivariant $K$-homology. We then specialize to the case that $G$ is simply connected, 
identifying the target of the quantization map with the fusion ring (Verlinde algebra). We begin by reviewing 
the quantization of ordinary Hamiltonian $G$-spaces. 

\subsection{Quantization of Hamiltonian $G$-spaces}\label{subsec:hamquant}
The quantization of Hamiltonian $G$-spaces in symplectic geometry, using $\Spin_c$-Dirac operators, was 
introduced by Guillemin in \cite{gu:re}. Suppose $(M,\om)$ is a compact symplectic manifold, 
with a Hamiltonian action of a compact Lie group $G$, with moment map $\Phi\colon M\to \g^*$. Let $L\to M$ be a $G$-equivariant pre-quantum line bundle
(i.e.~ the equivariant Chern class of $L$ is an integral lift of the class of the equivariant symplectic form). 
The choice of a $G$-invariant compatible almost complex structure on $M$ determines a $\Spin_c$-structure, with spinor bundle $\ca{S}$. Let $\dirac_L$ be the $\Spin_c$-Dirac operator with coefficients in $L$, that is, the 
Dirac operator for the twisted spinor module $\ca{S}\otimes L$. We define 
the quantization of $M$ to be the $G$-equivariant index of this Dirac operator:
\[ \ca{Q}(M)=\on{index}_G(\dirac_L)\in R(G).\]
As mentioned in the introduction, this quantization procedure is well-behaved under products and conjugation, 
and it satisfies a `quantization commutes with reduction' principle. The element $\ca{Q}(M)$ 
may be computed using the equivariant index theorem of Atiyah-Segal-Singer. 

In order to generalize to q-Hamiltonian spaces, it is convenient to rephrase the quantization procedure 
in terms of K-theory. The spinor bundle defines a $G$-equivariant Morita trivialization 
\begin{equation}\label{eq:spinc}
 (p,\ca{S})\colon (M,\Cl(TM))\da (\pt,\C),\end{equation} 
where $p\colon M\to \pt$ is the map to a point, and where the canonical anti-automorphism of the Clifford algebra is used to turn $\ca{S}$ into a right module. The pre-quantum line bundle may be viewed as a Morita morphism between trivial 
DD bundles, 
\begin{equation}\label{eq:preqs}
(p,L)\colon (M,\C)\da (\pt,\C).\end{equation}
Its tensor product with \eqref{eq:spinc} is the $L$-twisted $\Spin_c$-structure, 
$(p,\ca{S}\otimes L)\colon (M,\Cl(TM))\da (\pt,\C)$, and defines
a push-forward map in twisted $K$-homology, 
\[ p_*=K_0^G(p,\ca{S}\otimes L)\colon  K_0^G(M,\Cl(TM))\to K_0^G(\pt)\cong R(G). \]
The quantization is the push-forward of the fundamental class under this map: 
\[\ca{Q}(M)=p_*([M]).\] 
Note that this reformulation of the quantization no longer mentions the 
$\Spin_c$-Dirac operator.

\subsection{The canonical twisted $\on{Spin}_c$-structure}
Let $G$ be a compact connected Lie group. q-Hamiltonian $G$-spaces need not admit $\Spin_c$-structures in general. However, as shown in 
\cite{al:ddd} they carry a canonical `twisted' $\Spin_c$-structure. The definition involves a 
distinguished $G$-DD bundle $\A^{\Spin}\to G$, originally defined in \cite{at:twi} in terms of a bundle of projective Hilbert spaces. 

It can be defined as the pull-back, under the adjoint representation $\on{Ad}\colon G\to \SO(n)$
(using an orthonormal basis to identify $\g\cong \R^n$) of an $\SO(n)$-equivariant DD bundle 
$\A_{\SO(n)}\to \SO(n)$. The main properties of $\A_{\SO(n)}$ are as follows: i) $\A_{\SO(n)}$
is the restriction of $\A_{\SO(n+1)}$ under the inclusion $\SO(n)\hra \SO(n+1)$, and 
ii) if $n\ge 3$, the $H^3(\cdot,\Z)$-component and the $H^1(\cdot,\Z_2)$-component of the 
DD class of $\A_{\SO(n)}$ restrict to generators of
$H^3(\SO(n),\Z)=\Z$ respectively $H^1(\SO(n),\Z_2)=\Z_2$. As shown in \cite{al:ddd} the bundle $\A_{\SO(n)}$ 
is multiplicative, with a canonical (up to 2-isomorphism) multiplication morphism. 
Hence $\A^{\on{Spin}}\to G$ is multiplicative as well: 
\[ (\on{Mult},\F_{\on{Mult}})\colon (G\times G,\A^{\on{Spin}}\times \A^{\on{Spin}})\da (G,\A^{\on{Spin}}).\]
Let $\F_i,\F_{\on{Inv}},\F_{\exp}$ be the morphisms obtained from $\F_{\on{Mult}}$, as in Section 
\ref{subsec:mult} above. 
\begin{remark}
If $G$ is simple and simply connected, the Dixmier-Douady class $\on{DD}_G(\A^{\on{Spin}})\in H^3_G(G,\Z)=\Z$ is $\cox$ times the generator, where $\cox$ is the dual Coxeter number of $G$. 
\end{remark} 
For the next result, we recall \cite{al:mom} that q-Hamiltonian $G$-spaces $M$ for
compact, connected groups $G$ are always even-dimensional. Hence $\Cl(TM)$ is a $G$-DD bundle.

\begin{theorem}\cite{al:ddd}\label{th:ddd}
Let $G$ be a compact, connected Lie group. 
For any q-Hamiltonian $G$-space $(M,\om,\Phi)$ there is a distinguished 2-isomorphism class of 
$G$-equivariant Morita morphisms
\[ (\Phi,{\ca{S}})\colon (M,\Cl(TM))\da (G,\A^{\on{Spin}}),\]
We will refer to such a Morita morphism as a \emph{standard twisted $\Spin_c$-structure}
of $(M,\om,\Phi)$. If $G=\{1\}$ (so hat $(M,\om)$ is symplectic and $\A^{\on{Spin}}=\C$),  
the standard twisted $\Spin_c$-structure is the standard $\Spin_c$-structure 
$(M,\Cl(TM))\da (\pt,\C)$ of a symplectic manifold.  The canonical twisted $\Spin_c$-structures have 
the following properties: 
\begin{enumerate}
\item {\bf Conjugates.} If $(\Phi,{\ca{S}})$ is the standard twisted $\Spin_c$-structure for the q-Hamiltonian $G$-space $(M,\om,\Phi)$, then the composition 
\[ (\on{Inv},\F_{\on{Inv}})\circ (\Phi,\ol{\ca{S}})\colon (M,\Cl(TM))=(M,\Cl(TM)^{\on{op}})\da (G,\A^{\on{Spin}})\]
defines a standard twisted $\Spin_c$-structure of its conjugate $(M^*,\om^*,\Phi^*)$. 
\item {\bf Fusion.} Suppose $(M,\om,\Phi)$ is a q-Hamiltonian $G\times G$-space, and 
$(\Phi,{\ca{S}})\colon (M,\Cl(TM))\da (G\times G,\A^{\on{Spin}}\times\A^{\on{Spin}})$ its standard twisted $\Spin_c$-structure. 
Then 
\[ (\Phi_{\on{fus}},{\ca{S}}_{\on{fus}}):=(\Mult,\F_{\Mult})\circ (\Phi,{\ca{S}})\colon (M,\Cl(TM))\da (G,\A^{\on{Spin}})\]
is a standard twisted $\Spin_c$-structure for the fusion $(M_{\on{fus}},\om_{\on{fus}},\Phi_{\on{fus}})$. 
\item{\bf Exponentials.} 
Suppose $(M,\om_0,\Phi_0)$ is a Hamiltonian $G$-space whose moment map image $\Phi_0(M)\subset \g^*\cong\g$ is contained in a connected open neighborhood of $0\in\g$ over which the exponential map is $1-1$. 
If $(\Phi_0,{\ca{S}}_0)$ is a standard $\Spin_c$-structure for 
$(M,\om_0,\Phi_0)$ then 
\[ (\Phi,{\ca{S}})=(\exp,\F_{\exp})\circ (\Phi_0,{\ca{S}}_0)\colon 
(M,\Cl(TM))\da (G,\A^{\on{Spin}})
\]
is a standard twisted $\Spin_c$-structure for its exponential $(M,\om,\Phi)$. 
\end{enumerate}
\end{theorem}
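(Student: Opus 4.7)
\medskip

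The plan is to follow the strategy developed in \cite{al:ddd}, which reduces the statement to a combination of (i) a local Clifford-algebraic model of $\A^{\Spin}$ and (ii) the infinitesimal structure of q-Hamiltonian spaces. First I would recall the key model: since $\A^{\Spin}=\Ad^*\A_{\SO(n)}$, its fiber at $g\in G$ can be described, up to stable isomorphism, as a Clifford module for $\g\oplus\g$ (with quadratic form $B\oplus -B$) supported on the graph of $\Ad_g$. Thus a Morita morphism $(\Phi,\ca{S})\colon(M,\Cl(TM))\da (G,\A^{\Spin})$ amounts to producing, naturally in $m\in M$, a Lagrangian identification of $T_mM$ (viewed with $\om_m$) with an appropriate subspace of $\g\oplus\g$ determined by $\Ad_{\Phi(m)}$.

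Next I would construct $\ca{S}$ pointwise using the defining q-Hamiltonian conditions. The infinitesimal action $\g\to T_mM$, combined with the moment map condition $\ker\om_m=\operatorname{image}(\Ad_{\Phi(m)}+I)$, yields a short exact sequence that identifies a minimally degenerate form on $T_mM\oplus\g$ with the pullback of the $\Ad$-twisted form on $\g\oplus\g$. This is essentially a Dirac-geometric incarnation of the q-Hamiltonian condition, and it produces a fiberwise Morita bimodule between $\Cl(T_mM)$ and $(\Phi^*\A^{\Spin})_m$. To promote this to a global Morita bimodule $\ca{S}\to M$, I would invoke the Dirac-structure machinery of \cite{al:ddd} to glue the local models; uniqueness up to 2-isomorphism follows because any two such bimodules differ by a $G$-equivariant $\Z_2$-graded line bundle, and the relevant obstruction class lives in a group that is killed by the compatibility with $\eta_G$.

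Having constructed $\ca{S}$, I would verify the three naturality properties separately, each by checking that the two sides of the claimed identity produce the same Lagrangian/Clifford data at each point. For \textbf{conjugates}, the 2-form $-\om$ induces the opposite Clifford algebra structure, while $\F_{\on{Inv}}$ intertwines $\A^{\Spin,\op}$ with $\on{Inv}^*\A^{\Spin}$; the two constructions therefore produce the same Morita bimodule. For \textbf{fusion}, the fusion 2-form $\om_{\on{fus}}=\om+\Phi^*\sig$ is precisely the correction needed so that the fiberwise Lagrangian data for $(M_{\on{fus}},\om_{\on{fus}})$ agrees with the image of the product Lagrangian under the multiplication Morita morphism $\F_{\Mult}$; this uses associativity of $\F_{\Mult}$ essentially. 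For \textbf{exponentials}, one notes that the Hamiltonian standard $\Spin_c$-structure $\ca{S}_0$ of $(M,\om_0)$ corresponds to the Lagrangian structure on $T_mM\oplus\g$ provided by the classical moment map; the 2-form correction $\Phi_0^*\varpi$ is precisely the primitive of $-\exp^*\eta$ arising in the composition with $\F_{\exp}$, so the two constructions match.

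The main obstacle, and the most technical step, is the global assembly of the pointwise model into a well-defined $G$-equivariant Morita bimodule, together with the verification that its 2-isomorphism class is canonical (independent of auxiliary choices such as orthonormal frames or local trivializations of $\A_{\SO(n)}$). This requires a careful tracking of the $H^1_G(M,\Z_2)\times H^2_G(M,\Z)$ freedom in choosing a Morita bimodule, and showing that the q-Hamiltonian axioms cut it down to a single class. Once this is accomplished, the three functoriality properties are essentially formal consequences of the uniqueness, combined with the matching of DD classes already verified in the pre-quantization discussion of Section~3.
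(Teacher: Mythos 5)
The paper does not prove Theorem~\ref{th:ddd}; it simply cites it from \cite{al:ddd}, where the construction of the canonical twisted $\Spin_c$-structure is carried out using the machinery of Dirac structures and pure spinors. So there is no proof in the present paper to compare against, and I can only assess your sketch against the content of the cited reference.

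Your outline is broadly consistent with the strategy in \cite{al:ddd}: one realizes $\A^{\Spin}$ (via $\Ad^*\A_{\SO(n)}$) in terms of Clifford/spinor data over $G$, interprets the q-Hamiltonian axioms Dirac-geometrically (the identification $\ker\om_m = \operatorname{image}(\Ad_{\Phi(m)}+I)$ is exactly the statement that $(\om,\Phi)$ is a morphism to the Cartan--Dirac structure on $G$), and uses the resulting Lagrangian data to build the Morita bimodule. The verification of conjugation, fusion, and exponential compatibilities via the multiplicative structure of $\A_{\SO(n)}$ is also the right mechanism.

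However, there is a conceptual slip in your treatment of uniqueness. You write that ``uniqueness up to 2-isomorphism follows because any two such bimodules differ by a $G$-equivariant $\Z_2$-graded line bundle, and the relevant obstruction class lives in a group that is killed by the compatibility with $\eta_G$.'' This is not what the theorem asserts and is not true in general: the set of $G$-equivariant Morita morphisms $(M,\Cl(TM))\da(G,\A^{\Spin})$, when nonempty, is a torsor over $H^2_G(M,\Z)\times H^1_G(M,\Z_2)$, which need not vanish for a q-Hamiltonian space. The theorem claims the existence of a \emph{distinguished} 2-isomorphism class, singled out by a canonical construction (and by its compatibilities), not that any two Morita morphisms with these source and target are automatically 2-isomorphic. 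The genuinely hard content of \cite{al:ddd} is showing that the Lagrangian/spinor data assembles to a globally well-defined bimodule whose 2-isomorphism class is independent of the auxiliary choices in the local construction; your sketch gestures at this but the argument you give for it would prove something false.

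A secondary point: for the fusion property you assert that $\om_{\on{fus}}=\om+\Phi^*\sig$ is ``precisely the correction needed'' to match $\F_{\Mult}$, but the matching must be carried out at the level of the spinor bimodules and requires a concrete computation of how $\sig$ deforms the Lagrangian splitting; as written this is an assertion rather than an argument. Similar remarks apply to the role of the primitive $\varpi$ in (c). None of this is wrong in spirit, but it is not yet a proof.
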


The twisted $\Spin_c$-structure is compatible with reduction, in the following sense. 
Suppose $(M,\om,\Phi)$ is a q-Hamiltonian $G$-space, with $e$ is a regular value of the moment map, and with 
$G$ acting freely on $Z=\Phi^{-1}(e)$. Let $M_\red=Z/G$ be the reduced space. Using the isomorphism $TM|_Z=TM_\red
\oplus (\g\oplus \g)$ one obtains a Morita morphism, $(Z,\Cl(TM)|_Z)\da 
(M_\red,\Cl(TM_\red))$ covering the identity. Then the following diagram of Morita morphisms commutes 
up to 2-isomorphism:
\[ \xymatrix{
 {(M,\Cl(TM))} \ar@{-->}[r] &{(G,\A^{\Spin})}\\ 
 {(Z,\Cl(TM)|_Z)}\ar@{-->}[r]\ar@{-->}[d] \ar@{-->}[u] & 
%\ar@[r]
{(\{e\},\A^{\on{Spin}}|_{\{e\}}})\ar@{-->}[d]\ar@{-->}[u]
\\
{(M_\red,\Cl(TM_\red))}\ar@{-->}[r] & {(\pt,\C)}
}\]

Here the horizontal maps are the canonical (twisted) $\Spin_c$-structures for  $M,M_\red$.

\subsection{Quantization}
Suppose $(M,\om,\Phi)$ is a compact pre-quantized q-Hamiltonian $G$-space. 
The canonical twisted $\Spin_c$-structure and the pre-quantization 
\[ \begin{split}(\Phi,{\ca{S}})&\colon (M,\Cl(TM))\da (G,\A^{\Spin})\\
(\Phi,\E)&\colon (M,\C)\da (G,\A^{\on{Preq}})
\end{split}\]
are the counterparts to \eqref{eq:spinc} and \eqref{eq:preqs}. Tensoring, we obtain a 
$G$-equivariant Morita morphism $(\Phi,{\ca{S}} \otimes \E )\colon (M,\Cl(TM))\da (G,\A^{\on{Preq}}\otimes \A^{\Spin})$, 
and a push-forward map 
\[ \Phi_*=K_0^G(\Phi,\ca{S}\otimes \E)
\colon K_0^G(M,\Cl(TM))\to K_0^G(G,\A^{\on{Preq}}\otimes \A^{\Spin}).\]
\begin{definition}
For a compact pre-quantized q-Hamiltonian $G$-space $(M,\om,\Phi)$ we define its quantization as the push-forward in twisted $K$-homology, 
\[ \ca{Q}(M):=\Phi_*([M])\in K_0^G(G,\A^{\on{Preq}}\otimes \A^{\Spin}).\] 
\end{definition}

\begin{remarks}
\begin{enumerate}
\item Given an equivariant vector bundle $E\to M$, one may
cap $[M]$ with the corresponding equivariant $K$-theory class,
 thus
defining invariants \[\ca{Q}(M,E):=\Phi_*([E]\cap [M])\in K_0^G(G,\A^{\on{Preq}}\otimes \A^{\Spin}).\]
(Equivalently, $[E]\cap [M]$ is the image of $[E]$ under the Poincar\'{e} duality isomorphism 
$K^0_G(M)\to K_0^G(M,\Cl(TM))$.)  

\item In the symplectic case (Section \ref{subsec:hamquant}), one obtains a sequence $\ca{Q}_r(M)
=\on{index}(\dirac_{L^r})\in R(G)$ by replacing 
$L$ with its $r$-th tensor power. For $r>0$ this amounts to replacing $\om$ with its 
$r$-th multiple. The limit $r\to \infty$ is referred to as the semi-classical limit, with 
$r^{-1}$ playing the role of Planck's constant $\hbar$. For pre-quantized q-Hamiltonian spaces, 
one similarly obtains a sequence of elements $\ca{Q}_r(M)\in R_{kr}(G)$, by raising 
$(\Phi,\E)\colon (M,\C)\da (G,\A^{\on{Preq}})$ to its $r$-th tensor power. 
\end{enumerate}
\end{remarks}

Suppose that $\A^{\on{Preq}}$ is multiplicative. Let $\F_{\on{Mult}}^{\on{Preq}},\ \F_{\on{Mult}}^{\Spin}$ be the Morita morphisms defining the multiplicative structures 
for $\A^{\on{Preq}},\ \A^{\Spin}$. Then $\F_{\on{Mult}}=\F_{\on{Mult}}^{\on{Preq}}\otimes \F_{\on{Mult}}^{\Spin}$
defines a multiplicative structure for $\A=\A^{\on{Preq}}\otimes \A^{\Spin}$, and 
as explained in Section \ref{subsec:mult} the group 
\begin{equation}\label{eq:R} \mathsf{R}_\bullet=K_\bullet^G(G,\A^{\on{Preq}}\otimes \A^{\Spin})\end{equation}
is a ring with involution. 
Similar to the quantization of Hamiltonian $G$-spaces, 
we have: 
\begin{theorem}[Properties of q-Hamiltonian quantization]\label{th:prop}
Let $\A^{\on{Preq}}\to G$ be a multiplicative $G$-DD bundle whose class defines an integral lift of 
$[\eta_G]$, and let $\mathsf{R}$ be the ring \eqref{eq:R}.
\begin{enumerate}
\item {\bf Conjugation.} Let $(M,\om,\Phi)$ be a q-Hamiltonian $G$-space,  
pre-quantized by $(\E,\A^{\on{Preq}})$, and 
give $M^*$ the conjugate pre-quantization $\E^*$ (cf.~ Proposition \ref{prop:conjugates}). 
Then  
\[\ca{Q}(M^*)=\ca{Q}(M)^*\]
using conjugation in the ring $\mathsf{R}$. 
\item {\bf Fusion.} Let $(M,\om,\Phi)$ be a q-Hamiltonian $G\times G$-space, pre-quantized
by $(\E,\A^{\on{Preq}}\times\A^{\on{Preq}})$. Let $M_{\on{fus}}$ be pre-quantized by 
$(\E_{\on{fus}},\A^{\on{Preq}})$ (cf.~ Proposition \ref{prop:fusions}). Then 
$\ca{Q}(M_{\on{fus}})$ is the image of $\ca{Q}(M)$ under the product map 
$\mathsf{R}\otimes \mathsf{R}\to \mathsf{R}$. 
  In particular, if $(M_i,\om_i,\Phi_i),\ i=1,2$ are pre-quantized by $(\E_i,\A^{\on{Preq}})$ then 
\[ \ca{Q}(M_1\fus M_2)=\ca{Q}(M_1)\ca{Q}(M_2).\]
\item{\bf Exponentials.} Let $(M,\om_0,\Phi_0)$ be a compact pre-quantized Hamiltonian $G$-space such that 
$\Phi_0(M_0)\subset\g^*\cong\g$
is contained in a contractible open neighborhood of $0$ where $\exp$ is 1-1.
Let $(M,\om,\Phi)$ be its exponential, with the resulting pre-quantization $(\E,\A^{\on{Preq}})$ (cf.~ Proposition
\ref{prop:exxponentials}). 
Then $\ca{Q}(M)$ is the image of $\ca{Q}(M_0)\in R(G)$ under the ring homomorphism 
$R(G)\to \mathsf{R}$. 
\end{enumerate}
\end{theorem}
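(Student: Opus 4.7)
The approach is that each of the three properties is an instance of functoriality of the push-forward in twisted equivariant $K$-homology, combined with the corresponding functorial statements already recorded for the standard twisted $\Spin_c$-structure (Theorem~\ref{th:ddd}) and for the pre-quantization (Propositions~\ref{prop:fusions}, \ref{prop:conjugates}, \ref{prop:exxponentials}). In each case I would first exhibit the quantization Morita morphism for the transformed space as a post-composition of the original one with a standard morphism built from $\F_{\on{Mult}}$, $\F_{\on{Inv}}$ or $\F_{\exp}$, and then invoke $K$-homology functoriality to convert the composition into the asserted operation on $\mathsf{R}$.

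For (b), tensoring the fusion formulas from Theorem~\ref{th:ddd}(b) and Proposition~\ref{prop:fusions} gives the identity $(\Phi_{\on{fus}},\ca{S}_{\on{fus}}\otimes\E_{\on{fus}})=(\on{Mult},\F_{\on{Mult}})\circ(\Phi,\ca{S}\otimes\E)$, and applying $K$-homology yields $\ca{Q}(M_{\on{fus}})=\on{Mult}_*\ca{Q}(M)$, which by the definition of the ring structure in Section~\ref{subsec:mult} is the image of $\ca{Q}(M)$ under the product map. For (c), Theorem~\ref{th:ddd}(c) and Proposition~\ref{prop:exxponentials} give the factorization $(\Phi,\ca{S}\otimes\E)=(\exp,\F_{\exp})\circ(\Phi_0,\ca{S}_0\otimes L)$, so $\Phi_*=\exp_*\circ(\Phi_0)_*$. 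The linear scaling $(t,\xi)\mapsto t\xi$ is a $G$-equivariant contraction of $\g$ to $0$, so $\Phi_0$ is $G$-homotopic to $i_\pt\circ p$, where $p\colon M\to\pt$ and $i_\pt\colon\pt\hra\g$ is the inclusion of $0$; homotopy invariance of the push-forward then gives $(\Phi_0)_*[M]=(i_\pt)_*\ca{Q}(M_0)$. Because $\F_{\exp}$ was constructed as an extension of $\F_i$, one has $(\exp,\F_{\exp})\circ(i_\pt,\C)\simeq(i,\F_i)$, and the two factorizations combine to $\ca{Q}(M)=i_*\ca{Q}(M_0)$, which is the image of $\ca{Q}(M_0)$ under the ring homomorphism $R(G)\to\mathsf{R}$.

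For (a), Theorem~\ref{th:ddd}(a), Proposition~\ref{prop:conjugates}, and the compatibility $\ol{\ca{S}\otimes\E}=\ol{\ca{S}}\otimes\ol{\E}$ present the quantization morphism for $M^*$ as $(\on{Inv},\F_{\on{Inv}})\circ(\Phi,\ol{\ca{S}\otimes\E})$. By definition, $\ca{Q}(M)^*=\on{Inv}_*\bigl(*(\Phi_*[M])\bigr)$, and naturality of the duality $*\colon K_0^G(\cdot,\A)\to K_0^G(\cdot,\A^{\op})$ with respect to push-forward along the conjugate Morita morphism identifies $*(\Phi_*[M])$ with $(\Phi,\ol{\ca{S}\otimes\E})_*\,*([M])$. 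The two expressions for $\ca{Q}(M^*)$ and $\ca{Q}(M)^*$ then agree provided $*([M])=[M]$ under the canonical Morita identification $\Cl(TM)^{\op}\cong\Cl(TM)$ supplied by the principal anti-automorphism of $\Cl(TM)$. This self-duality of the fundamental class is the main obstacle: it has to be verified in Kasparov's explicit model (Appendix~\ref{app:khom}) using that the de~Rham--type operator representing $[M]$ is, up to a grading flip, formally self-adjoint. The remainder of the argument is bookkeeping, keeping the $\Spin$- and $\on{Preq}$-components of the various $\F$-morphisms correctly aligned with the tensor product $\A=\A^{\on{Preq}}\otimes\A^{\Spin}$.
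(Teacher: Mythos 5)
Your proof of part (b) is identical to the paper's; for parts (a) and (c) the paper only says "obtained similarly," and you have filled in the same factorization-plus-functoriality argument with more detail. Your treatment of (c) correctly supplements the bare factorization $(\Phi,\ca S\otimes\E)=(\exp,\F_{\exp})\circ(\Phi_0,\ca S_0\otimes L)$ with the observation that $\Phi_0$ is $G$-equivariantly properly homotopic to the constant map via linear scaling, so that $(\Phi_0)_*[M]=(i_\pt)_*\ca Q(M_0)$; combined with the fact that $\F_{\exp}$ restricts to $\F_i$ at $0\in\g$, this gives the stated conclusion. For (a), you have correctly isolated the one substantive fact that the paper's "similarly" suppresses: after unwinding Theorem~\ref{th:ddd}(a) and Proposition~\ref{prop:conjugates} and using naturality of $*$ under conjugate Morita morphisms, the assertion $\ca Q(M^*)=\ca Q(M)^*$ reduces precisely to the self-duality $*[M]=[M]$ of the fundamental class under the canonical identification $\Cl(TM)^{\op}\cong\Cl(TM)$. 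This does hold in Kasparov's model — the de Rham operator $D=\d+\d^*$ is self-adjoint, the $L^2$-forms admit a natural real structure identifying $\ol{\H}$ with $\H$, and the right $\Cl(TM)$-action is exchanged with the left one by the principal anti-automorphism — so what you flag as "the main obstacle" is genuine but true, not a gap. In short: same route as the paper, with the implicit steps of (a) and (c) spelled out correctly.
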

\begin{proof}
All of these facts are direct consequences of the functoriality properties established earlier. 
Consider for example the proof of (b). We have 
\[ (\Phi_{\on{fus}},\E_{\on{fus}})=(\on{Mult},\F_{\on{Mult}}^{\on{Preq}})\circ (\Phi,\E),\]
by definition of the pre-quantization of $M_{\on{fus}}$, and 
\[ (\Phi_{\on{fus}},{\ca{S}}_{\on{fus}})=(\on{Mult},\F_{\on{Mult}}^{\Spin})\circ (\Phi,{\ca{S}}),\]
by Theorem \ref{th:ddd}. Hence 
\[ (\Phi_{\on{fus}},\E_{\on{fus}} \otimes \ca{S}_{\on{fus}})
=(\on{Mult},\F_{\on{Mult}})\circ 
(\Phi,\E\otimes \ca{S}).\]
Applying the K-homology functor, this gives $(\Phi_{\on{fus}})_*=\on{Mult}_*\circ \Phi_*$, proving (b). Parts (a) and (c) are obtained similarly.  
\end{proof}

\begin{remark}
The same argument gives generalizations of these results to $\ca{Q}(M,E)$, for Hermitian vector bundles $E$. 
\end{remark}

\begin{remark}
The quantization of compact Hamiltonian $G$-spaces satisfies a Guillemin-Sternberg `quantization commutes with reduction' 
principle, see \cite{me:si,me:sym}. There is a similar result for q-Hamiltonian spaces, at least for $G$ simply connected. 
In more detail, suppose $(M,\om,\Phi)$ is a compact pre-quantized q-Hamiltonian $G$-space at level $k$, and let 
$\ca{Q}(M)\in R_k(G)$ be its quantization. The level $k$ fusion ring has an additive basis 
$\tau_\mu,\ \mu\in\Lambda^*_k$ indexed by the set of level $k$ weights, see Appendix  \ref{app:fus}, so that 
$\ca{Q}(M)=\sum N(\mu)\tau_\mu$ with multiplicities $N(\mu)\in\Z$. Let $\ca{Q}(M)^G=N(0)$ be the multiplicity of $\tau_0$. 
Suppose that $e$ is a regular value of $\Phi$, and that $G$ acts freely on $\Phinv(e)$. The `quantization commutes with reduction' principle asserts that 
\begin{equation}\label{eq:quantred} \ca{Q}(M)^G=\ca{Q}(M\qu G).\end{equation}
One may drop the regularity assumptions on the moment map, provided the quantization of $M\qu G$ is defined using `partial desingularization' of the quotient, as in \cite{me:si}. 
In terms of the correspondence between q-Hamiltonian $G$-spaces and Hamiltonian $LG$-spaces, and 
`defining' $\ca{Q}(M)$ in terms of fixed point contributions (Theorem \ref{th:fix} below), 
the equality \eqref{eq:quantred} was proved in \cite{al:fi}. Unfortunately, the proof in \cite{al:fi} is fairly complicated, and one would certainly prefer a more conceptual argument. 
The appropriate generalization to non-simply connected groups is an open question as well. 
\end{remark}

\section{Localization}

For the remainder of this paper, we assume that $G$ 
is compact and simply connected. Throughout, we fix a decomposition into simple factors $G=G_1\times\cdots \times G_N$. 
For $l\in \Z^N$, we denote by $\A^{(l)}$ any 
$G$-DD bundle with $\on{DD}_G(\A^{(l)})\in H^3_G(G,\Z)\cong \Z^N$ given by 
$l$. For example, $\A^{\Spin}$ is a possible choice for $\A^{(\cox)}$. 
Furthermore, we will fix a maximal torus $T$, with Lie algebra $\t$ and lattice 
$\Lambda\subset\t$, as well as a fundamental Weyl chamber. Let $\alpha_i\in\Lambda^*=\on{Hom}(\Lambda,\Z)$ 
be the corresponding simple roots, $\alpha_i^\vee\in\Lambda$ the simple co-roots, and $\varpi_i\in\Lambda^*$ the fundamental 
weights. 

For a pre-quantized q-Hamiltonian $G$-space 
at level $k$, the invariants $\ca{Q}(M,E)$  are elements of the level $k$ fusion 
ring. In the Appendix (cf.~ Definition \ref{def:fus}) we define the fusion ring as a quotient 
$R_k(G)=R(G)/I_k(G)$, where the level $k$ fusion ideal $I_k(G)$ consists of characters vanishing at all 
\[ t_\lambda=\exp(B_{k+\cox}^\sharp(\lambda+\rho)),\ \ \lambda\in\Lambda^*_k.\]
Here $\Lambda^*_k$ are the level $k$ weights, i.e. those weights such that $B_{k}^\sharp(\lambda)$ lies in 
the fundamental alcove. The evaluation map $\on{ev}_{t_\lambda}\colon R(G)\to \C,\ \chi\mapsto \chi(t_\lambda)$ descends to a 
map $\on{ev}_{t_\lambda}\colon R_k(G)\to \C,\ \tau\mapsto \tau(t_\lambda)$. By inverse Fourier transform
(see Proposition \ref{prop:orth}), any $\tau\in R_k(G)$ may be recovered 
from these values. Our goal in this Section is to compute the numbers
\[\ca{Q}(M,E)(t_\lambda):=\on{ev}_{t_\lambda}\ca{Q}(M,E)\]
by localization to the fixed point set of $t_\lambda$.

\subsection{Restriction of $\A^{(l)}\to G$ to the maximal torus}\label{subsec:res}
The localization procedure will involve the restriction of $\A^{(k+\cox)}$ to the
maximal torus $T\subset G$. Since the pull-back map $H^3(G,\Z)\to H^3(T,\Z)$
is the zero map, it is immediate that $\A^{(l)}|_T$ is
Morita trivial, for any $l\in\Z^N$. However, the pull-back is not \emph{$T$-equivariantly} 
Morita trivial. The following fact is proved in \cite[Proposition 3.1]{me:con}. 
\begin{lemma} \label{lem:ma}
The map 
\[H^3_G(G,\Z)\to H^3_T(T,\Z)=H^3(T,\Z)\oplus H^1(T,\Z)\otimes H^2_T(\pt,\Z)\] 
takes values in the second summand $H^1(T,\Z)\otimes H^2_T(\pt,\Z)
=\Lambda^*\otimes\Lambda^*$, and takes $l\in \Z^N=H^3_G(G,\Z)$  
to 
\begin{equation}\label{eq:lo}
 -\sum_{i,j} B_l(\alpha_i^\vee,\alpha_j^\vee) \varpi_i\otimes\varpi_j
 =-\sum_i \varpi_i\otimes B_l^\flat(\alpha_i^\vee).
\end{equation}
\end{lemma}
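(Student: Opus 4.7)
The plan is to work in the Cartan model of equivariant de Rham cohomology. The class $l\in H^3_G(G,\Z)=\Z^N$ has a distinguished representative, namely the closed equivariant Cartan 3-form
\[ \eta_G(\xi)=\eta-\tfrac12 B_l(\theta^L+\theta^R,\xi)\]
built from the bilinear form $B_l$; its integrality is exactly the integrality of $l$. I would restrict this cocycle to $T$ and identify its class in $H^3_T(T,\R)$ under the Künneth decomposition
\[ H^*_T(T,\R)=H^*(T,\R)\otimes H^*_T(\pt,\R), \]
valid because $T$ acts trivially on itself by conjugation. In degree three, only the bidegrees $(3,0)$ and $(1,2)$ contribute (the others vanish since $H^{\on{odd}}_T(\pt,\R)=0$), yielding $H^3_T(T,\R)=H^3(T,\R)\oplus H^1(T,\R)\otimes H^2_T(\pt,\R)$.

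Next, I would compute $\eta_G|_T$ directly. On $T$, both Maurer-Cartan forms coincide with a single $\t$-valued Maurer-Cartan form $\theta$; moreover, since $T$ is abelian, $[\theta,\theta]=0$, forcing $\eta|_T=0$. Hence $\eta_G|_T=-B_l(\theta,\xi)$, which is of pure bidegree $(1,2)$. This already establishes that the restriction map lands in the second summand.

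To identify the precise element of $\Lambda^*\otimes\Lambda^*$, I would use the fact that $G$ is simply connected, so $\{\alpha_i^\vee\}$ is a $\Z$-basis of $\Lambda$ with dual $\Z$-basis $\{\varpi_i\}$ of $\Lambda^*$. Writing $\theta=\sum_i\theta^{(i)}\alpha_i^\vee$, the scalar 1-form $\theta^{(i)}$ represents $\varpi_i\in H^1(T,\Z)=\Lambda^*$ (verified by integrating over the loop $t\mapsto \exp(t\mu)$ for $\mu\in\Lambda$), and similarly the linear function $\xi^{(j)}$ on $\t$ dual to $\alpha_j^\vee$ represents $\varpi_j\in H^2_T(\pt,\Z)=\Lambda^*$. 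Expanding
\[ -B_l(\theta,\xi)=-\sum_{i,j}B_l(\alpha_i^\vee,\alpha_j^\vee)\,\theta^{(i)}\otimes \xi^{(j)} \]
then yields the first form of the claimed formula, and the identity $B_l^\flat(\alpha_i^\vee)=\sum_j B_l(\alpha_i^\vee,\alpha_j^\vee)\varpi_j$ rewrites it in the second.

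The main obstacle is careful bookkeeping: checking the identification of the de Rham and integral versions of $H^1(T)$ and $H^2_T(\pt)$ with $\Lambda^*$, confirming that $B_l(\alpha_i^\vee,\alpha_j^\vee)\in\Z$ for integer level (which follows from $B_l(\alpha^\vee,\alpha^\vee)=2l_i$ for short coroots of each simple factor), and ensuring that the bidegree decomposition corresponds correctly to the Künneth summands. No deeper issues arise — the proof is essentially a direct Cartan-model calculation exploiting the abelianness of $T$.
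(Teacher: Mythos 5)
Your proof is correct, and it is the natural Cartan-model calculation. Note that the paper does not actually prove this lemma itself but defers to \cite[Proposition 3.1]{me:con}; the argument there is along the same lines (restriction of the equivariant Cartan cocycle $\eta_G$ to $T$, exploiting that $T$ acts trivially on itself, $\theta^L|_T=\theta^R|_T$, and $\eta|_T=0$).

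Two small points worth making explicit, which you allude to but could state more forcefully: (1) the passage from the real Cartan class $[\eta_G|_T]\in H^3_T(T,\R)$ back to the integral class is legitimate because $H^3_T(T,\Z)=\Lambda^*\otimes\Lambda^*\oplus\wedge^3\Lambda^*$ is torsion-free, so the map to real coefficients is injective and the image of $l$ is uniquely determined by the de Rham computation; and (2) the identification of $\{\alpha_i^\vee\}$ as a $\Z$-basis of $\Lambda$ (with $\{\varpi_i\}$ the dual basis of $\Lambda^*$) uses $G$ simply connected — the paper works throughout under this hypothesis in this section, but for the lemma to read as stated it is essential. With those caveats registered, the bookkeeping of bidegrees and the identifications $[\theta^{(i)}]\mapsto\varpi_i$ in $H^1(T,\Z)$ and $\xi^{(j)}\mapsto\varpi_j$ in $H^2_T(\pt,\Z)$ are exactly right, and the sign comes out correctly from $\eta_G|_T(\xi)=-B_l(\theta,\xi)$.
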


%\begin{proof}
%Assume that $G$ is simple. Since $H^3_G(G,\Z), H^3_T(T,\Z)$ have no torsion, we may pass to real 
%coefficients and work with 
%differential forms. The element $l\in \Z^N=H^3(G,\Z)$ is represented by the equivariant 3-form
%$\eta_G\in \Om^3_G(G)$, defined using the inner product $B_l$ on $\g$. The map $\Om^3_G(G)\to \Om^3_T(T)$ takes 
%$\eta_G$ to $B_l(\theta_T,\cdot)\in \Om^3_T(T)=
%\Om^3(T)\oplus \Om^1(T)\otimes \t^*$, where $\theta_T\in \Om^1(T,\t)$ is the Maurer-Cartan form on $T$. 
%Thinking of $\varpi_i$ as 1-forms on $T$, we have $\theta_T=\sum_i \varpi_i\otimes \alpha_i^\vee$, hence 
%$B_l(\theta_T,\cdot)=\sum_i \varpi_i\otimes B_l(\alpha_i^\vee,\cdot)
%=\sum_{i,j} B_l(\alpha_i^\vee,\alpha_j^\vee) \varpi_i\otimes\varpi_j$. 
%\end{proof}

For a level $l\in \Z^N$ with $l_i>0$, we define a finite subgroup $T_l\subset T$ by 
\[ T_l=B_l^\sharp(\Lambda^*)/\Lambda\]
(see Appendix \ref{app:fus}). Notice that the elements $t_\lambda$ are contained in $T_{k+\cox}$. 

\begin{proposition}\label{prop:restrict}
Suppose $l\in \Z^N$ with $l_i>0$. There is a $T_l$-equivariant Morita trivialization 
\[ (p,\D)\colon (T,\A^{(l)}|_T)\da (\pt,\C)\]
such that $\D|_{\{e\}}$ extends to a $G$-equivariant Morita trivialization of $\A^{(l)}|_e$. 
This property determines $\D$  up to a line bundle over $T$ with trivial $T_l$-action.  
\end{proposition}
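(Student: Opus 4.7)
My plan is to establish existence of $\D$ by showing vanishing of the $T_l$-equivariant Dixmier-Douady class of $\A^{(l)}|_T$, then adjust to match a $G$-equivariant Morita trivialization at $e$ via a character twist, and finally identify the residual freedom.

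For existence, I would invoke Lemma \ref{lem:ma}: the class $\on{DD}_T(\A^{(l)}|_T)$ lies in the Künneth summand $H^1(T,\Z)\otimes H^2_T(\pt,\Z)=\Lambda^*\otimes\Lambda^*$, given by $-\sum_i \varpi_i\otimes B_l^\flat(\alpha_i^\vee)$. Since $T_l$ acts trivially on $T$ (the conjugation action restricted to the torus is trivial), the restriction $H^2_T(\pt,\Z)=\Lambda^*\to H^2_{T_l}(\pt,\Z)=\wh{T_l}$ is just restriction of characters, and on the generator $[B_l^\sharp(\nu)]\in T_l$ one computes
\[ \langle B_l^\flat(\alpha_i^\vee),\ B_l^\sharp(\nu)\rangle = B_l(\alpha_i^\vee,B_l^\sharp(\nu)) = \langle \nu,\alpha_i^\vee\rangle\in\Z,\]
since $\nu\in\Lambda^*$ pairs integrally with the coroot $\alpha_i^\vee\in\Lambda$. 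Thus each tensor factor $B_l^\flat(\alpha_i^\vee)$ restricts to the trivial character of $T_l$, so the class dies in $H^3_{T_l}(T,\Z)$. The $H^1(\cdot,\Z_2)$-component of $\on{DD}(\A^{(l)})$ is already trivial because $H^1_G(G,\Z_2)=0$ for $G$ simply connected. Vanishing of the full $T_l$-equivariant Dixmier-Douady class produces a $\Z_2$-graded $T_l$-equivariant Morita trivialization $\D'$ of $\A^{(l)}|_T$.

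For the extension at $e$, use that $H^3(BG,\Z)=0$ for $G$ compact simply connected (by Hurewicz, since $\pi_i(BG)=0$ for $i\le 3$), so $\A^{(l)}|_e$ admits a $G$-equivariant Morita trivialization $\D_0$. As $T_l$-equivariant trivializations of the DD bundle over a point, $\D'|_e$ and $\D_0$ differ by a character $\chi\in\wh{T_l}$; tensoring $\D'$ with the trivial graded line bundle on $T$ carrying the $T_l$-action $\chi^{-1}$ yields the required $\D$. For uniqueness, two admissible $\D$'s differ by a $T_l$-equivariant graded line bundle $L$ on $T$ which is $T_l$-equivariantly trivial at $e$. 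Since $T_l$ fixes $T$ pointwise, the $T_l$-action on fibers of $L$ defines a continuous map $T\to\wh{T_l}$ to a discrete group, hence constant on the connected space $T$; the condition at $e$ forces this constant character to be trivial, so $T_l$ acts trivially on $L$ globally. I expect the principal technical nerve to be the character computation $B_l^\flat(\alpha_i^\vee)|_{T_l}=1$; everything else is standard equivariant obstruction theory and the torsor structure of Morita trivializations over line bundles.
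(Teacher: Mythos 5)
Your proof is correct and follows essentially the same route as the paper's: invoke Lemma~\ref{lem:ma}, observe that $B_l^\flat(\alpha_i^\vee)\in B_l^\flat(\Lambda)$ dies under $\Lambda^*\to\wh{T_l}=\Lambda^*/B_l^\flat(\Lambda)$ to get existence, twist by a character of $T_l$ to fix the trivialization at $e$, and note the residual ambiguity is a line bundle with trivial $T_l$-action. Your explicit pairing computation $\langle B_l^\flat(\alpha_i^\vee),B_l^\sharp(\nu)\rangle=\langle\nu,\alpha_i^\vee\rangle\in\Z$ and the connectedness argument for uniqueness are just slightly more spelled-out versions of the same steps.
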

\begin{proof}
The map 
\[ H^3_G(G,\Z)\to H^3_{T_r}(T,\Z)=H^3(T,\Z)\oplus H^1(T,\Z)\otimes H^2_{T_r}(\pt,\Z)\] 
is given by the map to $H^1(T,\Z)\otimes H^2_T(\pt,\Z)=\Lambda^*\otimes\Lambda^*$ 
followed by the quotient map $\Lambda^*\otimes (\Lambda^*/B_l^\flat(\Lambda))$. 
Lemma \ref{lem:ma} shows that the element $l\in \Z^N=H^3_G(G,\Z)$ maps to $0$ under this map. 
This shows the existence of a $T_l$-equivariant Morita trivialization $(p,\D)
\colon (T,\A^{(l)}|_T)\da (\pt,\C)$. It is unique up to a $T_{l}$-equivariant line bundle over $T$.
By twisting with a 1-dimensional representation of $T_{l}$ on $\C$, if necessary, 
we can arrange that $\D|_e$ is the unique $G$-equivariant Morita trivialization
of $\A^{(l)}|_e$. The remaining ambiguity is a line bundle over $T$ with trivial $T_l$-action. 
\end{proof}
Proposition \ref{prop:restrict} will suffice for the purposes of our localization formula. 
However, it seems natural to ask to what extent the remaining $H^2(T,\Z)$-ambiguity in the 
Morita trivialization can be removed. It turns out that there is a canonical choice 
in some cases (e.g.~ for $l$ even). The remainder of this Section will investigate this 
question. 
 
Equivalence classes of (non-equivariant) Dixmier-Douady bundles over $G$ with Morita trivializations over 
$T$ are classified by the relative cohomology group $H^3(\iota_T,\Z)=H^3(G,T,\Z)$. The long exact sequence in relative cohomology gives a short exact
sequence
\[ 0\lra H^2(T,\Z)\to H^3(G,T,\Z)\to H^3(G,\Z)\lra 0\]
We are interested in choices of splittings $H^3(G,\Z)\to H^3(G,T,\Z)$ of this sequence.
  
Since neither $H^3(G,\Z)=\Z^N$ nor $H^2(T,\Z)=\wedge^2_\Z \Lambda^*$ have  
torsion, the group $H^3(G,T,\Z)$ has no torsion. Hence no information is lost by 
passing to real coefficients, and we may work with differential forms (using de Rham cohomology). 
Over the real numbers, the sequence splits by the map 
\begin{equation}\label{eq:splitR} H^3(G,\R)\to H^3(G,T,\R),\ [\beta]\mapsto [(0,\beta)].\end{equation}
Hence $H^3(G,T,\R)\cong H^2(T,\R)\oplus H^3(G,\R)=
\wedge^2_\R \Lambda^*\oplus \R^N$. 
This defines an inclusion
\begin{equation}\label{eq:inclus}
H^3(G,T,\Z)\hra \wedge^2_\R \Lambda^*\oplus \R^N,\end{equation}
and we are interested in its image. We may assume that $G$ is simple (so that $N=1$).
The wedge products $\varpi_i\wedge \varpi_j,\ i<j$ are a 
$\Z$-basis of $H^2(T,\Z)=\wedge^2_\Z\Lambda^*$. Define an element 
\[ \varsigma=\sum_{i<j} B(\alpha_i^\vee,\alpha_j^\vee) \varpi_i\wedge\varpi_j\in\wedge^2_\Z \Lambda^*,\]
where $B$ is the basic inner product on $\g$. Note that $\varsigma$ depends not only on the choice of Weyl chamber, 
but also 
on the choice of ordering of the 
simple roots. 

\begin{proposition}
The image of the map \eqref{eq:inclus}
consists of all $(\delta+\f{s}{2}\varsigma,s)$ such that 
$s\in\Z$ and $\delta\in \wedge^2_\Z\Lambda^*$. We hence obtain a splitting $H^3(G,\Z)=\Z\to H^3(G,T,\Z)
\subset \wedge^2_\R \Lambda^*\oplus \R$ by 
\[ s\mapsto \Big(\f{s}{2}\varsigma,\ s\Big).\]
\end{proposition}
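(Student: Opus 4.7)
Both $H^3(G,\Z)=\Z$ and $H^2(T,\Z)=\wedge^2_\Z\Lambda^*$ are torsion-free, so the short exact sequence
\[ 0 \to \wedge^2_\Z\Lambda^* \to H^3(G,T,\Z) \to \Z \to 0\]
shows that $H^3(G,T,\Z)$ is torsion-free and that its image in $\wedge^2_\R\Lambda^*\oplus\R$ under \eqref{eq:inclus} has the form $\wedge^2_\Z\Lambda^*\oplus 0 \,+\, \Z\cdot(\delta_1,1)$, where $\delta_1\in\wedge^2_\R\Lambda^*$ is the first component of any integral lift of $1\in H^3(G,\Z)$.  This $\delta_1$ is well-defined modulo $\wedge^2_\Z\Lambda^*$, and the proposition is exactly the assertion that $\delta_1 \equiv \f{1}{2}\varsigma$.

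To pin down $\delta_1$ I would work in the relative de Rham model.  An integral lift of $1$ is represented by a relative 3-cocycle $(\eta,\mu)$ where $\eta$ is the basic Cartan 3-form (which restricts to zero on $T$, so $\mu$ may be taken closed) and $[\mu] = \delta_1 \in H^2(T,\R)$.  Integrality amounts to the condition
\[ \int_\Sigma f^*\eta \,-\, \int_{\partial\Sigma} f^*\mu \,\in\, \Z\]
for every smooth $f\colon(\Sigma,\partial\Sigma)\to(G,T)$ from a compact oriented $3$-manifold with boundary.  The coset of $\mu$ modulo $\wedge^2_\Z\Lambda^*$ is detected only by maps $f$ whose boundary restriction represents a nonzero class in $H_2(T,\Z)=\wedge^2\Lambda$.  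For each pair $i<j$ of simple coroots one takes the model $2$-torus $\partial\Sigma\to T,\,(s,t)\mapsto\exp(s\alpha_i^\vee)\exp(t\alpha_j^\vee)$, which realizes $\alpha_i^\vee\wedge\alpha_j^\vee\in H_2(T,\Z)$, and a bounding $3$-chain in $G$ built from a commutator-nullhomotopy that exploits the non-commutativity of $\exp(s\alpha_i^\vee)$ and $\exp(t\alpha_j^\vee)$ inside $G$.

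A Chern--Simons-type computation with $\eta=\f{1}{12}B(\theta^L,[\theta^L,\theta^L])$ on such a bounding chain yields
\[ \int_\Sigma f^*\eta \,\equiv\, \f{1}{2}\, B(\alpha_i^\vee,\alpha_j^\vee) \pmod{\Z}.\]
Since the pairing of $\varpi_i\wedge\varpi_j$ with the class $\alpha_i^\vee\wedge\alpha_j^\vee$ equals $1$, this forces the coefficient of $\varpi_i\wedge\varpi_j$ in $\mu$ to be $\f{1}{2}B(\alpha_i^\vee,\alpha_j^\vee)$ modulo $\Z$, and summing over $i<j$ gives $\delta_1\equiv\f{1}{2}\varsigma\pmod{\wedge^2_\Z\Lambda^*}$, as claimed; the splitting $s\mapsto(\f{s}{2}\varsigma,s)$ is then the unique $\Z$-linear section with image inside the integer lattice.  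The main obstacle, and the essential analytic content, is the evaluation of the Chern--Simons integral on the commutator bounding $3$-chain — in particular, verifying the $\f{1}{2}$ factor, which is the secondary transgression responsible for the shift and the reason the canonical integer splitting is not simply given by $\mu=0$.
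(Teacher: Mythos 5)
Your overall strategy matches the paper's exactly: you reduce the problem to identifying the coset $\delta_1 \bmod \wedge^2_\Z\Lambda^*$ of the $H^2(T,\R)$-component of an integral lift of $1 \in H^3(G,\Z)$, you recognize that this is detected by pairing with the relative $3$-cycles coming from the $2$-tori $\alpha_i^\vee \wedge \alpha_j^\vee$ in $H_2(T,\Z) = \wedge^2_\Z \Lambda$ together with bounding chains in $G$, and you claim the critical identity $\int h^*\eta \equiv \tfrac12 B(\alpha_i^\vee,\alpha_j^\vee) \pmod{\Z}$. All of this is precisely the structure of the paper's argument.

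But you leave a genuine gap where the real content lies, and you say so yourself: you assert the $\tfrac12 B(\alpha_i^\vee,\alpha_j^\vee)$ value of the secondary integral, then close by calling its verification ``the main obstacle'' and ``the essential analytic content.'' That computation is the entire point of the proposition. The paper carries it out cleanly by choosing the bounding homotopy in the explicit product form $h(u_1,u_2,t) = \gamma_1(u_1,t)\gamma_2(u_2,t)$, where each $\gamma_r$ retracts the one-parameter loop $\exp(u\alpha^\vee)$ to the identity; one then invokes the multiplicativity cocycle identity $\on{Mult}^*\eta = \pr_1^*\eta + \pr_2^*\eta + \d\big(\tfrac12 B(\pr_1^*\theta^L,\pr_2^*\theta^R)\big)$ to write $h^*\eta = \gamma_1^*\eta + \gamma_2^*\eta + \tfrac12 \d\, B(\gamma_1^*\theta^L,\gamma_2^*\theta^R)$. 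The first two terms die upon integration (they lack $\d u_2$, resp.\ $\d u_1$), and Stokes' theorem converts the last term into a boundary integral over $\Sigma\times\{0\}$, which evaluates directly to $\tfrac12 B(\alpha_i^\vee,\alpha_j^\vee)$. You need to include some such argument for the proof to stand; a bare appeal to ``a Chern--Simons-type computation'' is not enough, and you have flagged this yourself.

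One smaller point: your closing claim that $s\mapsto(\tfrac{s}{2}\varsigma,s)$ is ``the unique $\Z$-linear section with image inside the integer lattice'' is false. Any other section $s\mapsto(\tfrac{s}{2}\varsigma + s\delta', s)$ with $\delta'\in\wedge^2_\Z\Lambda^*$ also has image inside $H^3(G,T,\Z)$. The proposition only asserts that this particular formula gives \emph{a} splitting, not a canonical one; indeed $\varsigma$ itself already depends on the ordering of the simple roots, as the paper notes just before the statement.
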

\begin{proof}
We may regard the $\varpi_i$ as translation invariant 1-forms on $T$, hence $\varpi_i\wedge \varpi_j$ 
are viewed as translation invariant 2-forms on $T$. A relative 3-class
\begin{equation}\label{eq:anelement}
\Big(\sum_{i<j} t_{ij}\varpi_i\wedge\varpi_j,\ s\Big)\in \wedge^2_\R \Lambda^*\oplus \R
\end{equation} 
is integral if and only if for all maps $\lambda\colon \Sigma\to T$ from a closed surface $\Sigma$, and all homotopies 
$h\colon \Sigma\times [0,1]\to G$ between $h_0=\iota_T\circ \lambda$ and the constant map 
$h_1=e$, 
\[ \int_\Sigma \lambda^*\sum_{i<j} t_{ij}\varpi_i\wedge\varpi_j-s\int_{\Sigma\times[0,1]} 
h^*\eta\in\Z; \]
here $\eta\in\Om^3(G)$ is the Cartan 3-form defined by the 
basic inner product (so $[\eta]\in H^3(G,\R)$ is the image of the generator of $H^3(G,\Z)=\Z$). 
We have $H_2(T,\Z)=\wedge^2_\Z\Lambda=
\wedge^2_\Z[\alpha_1^\vee,\ldots,\alpha_n^\vee]$, where 
the class $\alpha_i^\vee\wedge\alpha_j^\vee,\ i<j$ is represented by the 2-cycle   
\[ \lambda\colon \Sigma=(\R/\Z)^2\to T,\ (u_1,u_2)\mapsto 
\exp(u_1\alpha_i^\vee+u_2\alpha_j^\vee).\]
It suffices to check the condition for these generators. 
Let $\gamma_1,\gamma_2\colon \R/\Z\times [0,1]\to  G$ be continuous retractions 
of the loops $\exp(u_1 \alpha_i^\vee),\ 
\exp(u_2 \alpha_j^\vee)$ onto $e$: That is, 
\[ \gamma_1(\cdot,0)=\gamma_1(\cdot,1)=\gamma_1(u_1,1)=e;\ \ 
\gamma_1(u_1,0)=
\exp(u_1\alpha_i^\vee)\]
and similarly for $\gamma_2$. Then 
\[ h\colon \Sigma\times [0,1]\to G,\ \ (u_1,u_2,t)\mapsto \gamma_1(u_1,t)\gamma_2(u_2,t)\]
is a homotopy from $\lambda$ to the constant map to $e$.  
We have 
\[ \int_{\Sigma}\lambda^*(\sum_{r<s}t_{rs} \varpi_r\wedge\varpi_s)
= \int_{\Sigma} t_{ij} \d u_1\wedge \d u_2=t_{ij}.\]
On the other hand, 
\[ h^*\eta=\gamma_1^*\eta+\gamma_2^*\eta+\hh \d B(\gamma_1^*\theta^L,\gamma_2^*\theta^R).\]
Integrating over $\Sigma \times [0,1]$, the first two terms do not contribute since they do not
contain $\d u_2$, respectively $\d u_1$. The integral of the last term
reduces, by Stokes' theorem, to an integral over 
$\Sigma\times\{0\}\subset \Sigma\times [0,1]$:
\[ \int_{\Sigma\times [0,1]} h^*\eta=\hh
\int_{\Sigma}B(\gamma_1(\cdot,0)^*\theta^L,\gamma_2(\cdot,0)^*\theta^R)
=\hh B(\alpha_i^\vee,\alpha_j^\vee).\]
We conclude that the pairing is given by $t_{ij}-\f{s}{2} B(\alpha_i^\vee,\alpha_j^\vee)$.  
Hence, $s$ and all of these numbers must be integers.
\end{proof}

\begin{remark}
As a consequence, we obtain a distinguished Morita trivialization of $\A^{(l)}$ provided the level $l$ 
is even, or more generally if $B_l(\alpha_i^\vee,\alpha_j^\vee)\in 2\Z$ for $i<j$. 
Indeed, the Proposition shows that under this assumption, $(0,l)\in \wedge^2_\R\Lambda^*\oplus \R\cong H^3(G,T,\R)$ 
is integral, because (for $G$ simple) $\f{l}{2}\varsigma$ is integral in that case. We note that 
the basic inner product takes on even values on $\Lambda$ in the following cases:
$G=\SU(2), G=\Spin(5)$ or $G=\on{Sp}(n),\ n\ge 3$. Hence, for these groups the Morita trivialization of $\A^{(l)}|_T$ 
is canonical up to 2-isomorphism, for all $l\in\Z$. 
\end{remark}

\subsection{The evaluation map $\on{ev}_t\colon R_k(G)\to \C$}
Let $k=(k_1,\ldots,k_N)$ with $k_i\ge 0$. In this Section, we express the evaluation map 
$\on{ev}_t\colon R_k(G)\to \C$ for $t=t_\lambda$ in $K$-homology terms. Fix a
$T_{k+\cox}$-equivariant Morita trivialization
\begin{equation}\label{eq:B}
(p,\D)\colon (T,\A^{(k+\cox)}|_T)\da (\pt,\C) 
\end{equation}
as in Proposition \ref{prop:restrict}. It gives a push-forward map: 
\begin{equation}\label{eq:pushtopt}
K_0^{T_{k+\cox}}(T,\A^{(k+\cox)}|_T) 
%\xra{\cong} K_0^{T_{k+\cox}}(T)
\to K_0^{T_{k+\cox}}(\pt).\end{equation}
The Localization Theorem \ref{th:locali} (applied to the conjugation 
action of $t\in T_{k+\cox}$ on $G$) gives an isomorphism
\begin{equation}\label{eq:2fold}
(\iota_T)_*\colon K_0^{T_{k+\cox}}(T,\A^{(k+\cox)}|_T)_{(t)}\to K_0^{T_{k+\cox}}(G,\A^{(k+\cox)})_{(t)}\end{equation}
Let $K_0^G(G,\A^{(k+\cox)})\to 
K_0^{T_{k+\cox}}(G,\A^{(k+\cox)})_{(t)}$
be the map restricting the action, followed by the quotient map to the
localized module. Composing with the inverse of \eqref{eq:2fold} we
obtain a map
\begin{equation}\label{eq:1fold}
K_0^G(G,\A^{(k+\cox)})\to K_0^{T_{k+\cox}}(T,\A^{(k+\cox)}|_T)_{(t)}
\end{equation}
\begin{proposition}
For any $t\in T_{k+\cox}\cap G^{\on{reg}}$, the evaluation map $\on{ev}_t\colon
R_k(G)\to \C$ factors as a composition, 
\begin{equation}\label{eq:compost}
 R_k(G)\cong K_0^G(G,\A^{(k+\cox)}) \to K_0^{T_{k+\cox}}(T,\A^{(k+\cox)}|_T)_{(t)}
\to K_0^{T_{k+\cox}}(\pt)_{(t)}\xra{\on{ev}_t} \C
\end{equation}
Here the first map is \eqref{eq:1fold} and the second map is 
\eqref{eq:pushtopt}, localized at $t$.
\end{proposition}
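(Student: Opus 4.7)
The plan is to reduce to classes of the form $i_*(\chi)$ with $\chi \in R(G)$ and then trace each step of \eqref{eq:compost}. By the Freed--Hopkins--Teleman Theorem \ref{th:fht}, the FHT isomorphism $R_k(G) \cong K_0^G(G, \A^{(k+\cox)})$ sends $[\chi] \in R_k(G)$ to $i_*(\chi) = K_0^G(i,\F_i)(\chi)$, where $i\colon \pt \to G$ is the inclusion of the identity. Since $\on{ev}_t$ on $R_k(G)$ descends from $\chi \mapsto \chi(t)$ on $R(G)$, it suffices to check that the composition in \eqref{eq:compost} sends $i_*(\chi)$ to $\chi(t)$.

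Next, I would use the factorization $i = \iota_T \circ i_T$, where $i_T \colon \pt \to T$ is the inclusion of $e$. This lifts to a factorization of $G$-equivariant Morita morphisms
\begin{equation*}
(i, \F_i) \;\simeq\; (\iota_T, \A^{(k+\cox)}|_T) \circ (i_T,\, \F_i),
\end{equation*}
where $(\iota_T, \A^{(k+\cox)}|_T)$ is the tautological Morita morphism $(T, \A^{(k+\cox)}|_T) \da (G, \A^{(k+\cox)})$ covering the inclusion. By functoriality this gives $i_*(\chi) = (\iota_T)_* (i_T)_*(\chi)$. Restricting to $T_{k+\cox}$-equivariance and localizing at $t$, Theorem \ref{th:locali} implies that $(\iota_T)_*$ is an isomorphism (since $t = t_\lambda$ is regular, so $G^t = T$). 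Hence \eqref{eq:1fold} sends $i_*(\chi)$ to $(i_T)_*(\chi|_{T_{k+\cox}}) \in K_0^{T_{k+\cox}}(T, \A^{(k+\cox)}|_T)_{(t)}$.

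The crux is the final pushforward $p_* = K_0^{T_{k+\cox}}(p, \D)$. By functoriality of Morita pushforward,
\begin{equation*}
(p, \D) \circ (i_T,\, \F_i) \;=\; \bigl(\id_\pt,\; \D|_e \otimes_{\A^{(k+\cox)}|_e} \F_i|_e \bigr).
\end{equation*}
The normalization in Proposition \ref{prop:restrict} forces $\D|_e$ to be 2-isomorphic, as a $G$-equivariant Morita trivialization of $\A^{(k+\cox)}|_e$, to $\F_i|_e$. Their balanced tensor product is therefore canonically the trivial line $\C$, so the composition is 2-isomorphic to $(\id_\pt, \C)$, inducing the identity on $K_0^{T_{k+\cox}}(\pt) = R(T_{k+\cox})$. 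Consequently $p_* (i_T)_* (\chi|_{T_{k+\cox}}) = \chi|_{T_{k+\cox}}$, and evaluation at $t \in T_{k+\cox}$ returns $\chi(t)$, as required.

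The main obstacle is this last step: verifying that the Morita composition at $e$ recovers the \emph{canonical} trivial automorphism of $(\pt, \C)$ rather than being twisted by a nontrivial $T_{k+\cox}$-character on the fiber. This is exactly what the compatibility condition on $\D|_e$ in Proposition \ref{prop:restrict} was designed to guarantee, and it is where the choice of normalization in the localization setup is essential.
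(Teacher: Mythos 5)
Your proof is correct and follows essentially the same route as the paper's: reduce to classes $i_*(\chi)$ coming from $R(G)$ via the FHT surjection, factor the inclusion $i\colon \pt\to G$ through $T$, and exploit the normalization of $\D|_e$ from Proposition \ref{prop:restrict} to see that the composed push-forward to $\pt$ is the identity on $R(T_{k+\cox})$. The only slip is calling $(\iota_T,\A^{(k+\cox)}|_T)\circ(i_T,\F_i)$ a factorization by \emph{$G$-equivariant} Morita morphisms — $\iota_T$ is only $N(T)$- (hence $T_{k+\cox}$-)equivariant, not $G$-equivariant — but this is immaterial since you only use it after restricting to $T_{k+\cox}$-equivariance.
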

Thus, while \eqref{eq:pushtopt} depends on the choice of Morita
trivialization \eqref{eq:B} (which was defined only up to a line bundle over $T$), 
the composition of maps \eqref{eq:compost} is independent of that
choice.
\begin{proof}
We have a commutative diagram, with \eqref{eq:compost} as the upper row, 
\[ \begin{CD} 
K_0^G(G,\A^{(k+\cox)}) @>>{\eqref{eq:1fold}}> K_0^{T_{k+\cox}}(T,\A^{(k+\cox)}|_T)_{(t)}
@>>{{\eqref{eq:pushtopt}}}> K_0^{T_{k+\cox}}(\pt)_{(t)}@>>{\on{ev}_t}> \C\\
@AAA @AA{(B)}A @. @.\\
{K_0^G(\pt)} @>>{(A)}> {K_0^{T_{k+\cox}}(\pt)_{(t)}} @.  @. 
\end{CD}\]
Here (A) is the map restricting the
action to $T_{k+\cox}$ followed by the quotient map to the localized
module, while (B) is the push-forward under inclusion of
$\pt$ as the group unit in $T$, using the canonical $G$-equivariant
trivialization of $\A|_{\{e\}}$. The composition of the maps (B) and
\eqref{eq:pushtopt} is the identity, since we have chosen the Morita
trivialization of $\A^{(k+\cox)}|_T$ to extend the canonical
trivialization at $e$. Hence the composition of maps (A), (B), 
\eqref{eq:pushtopt}, $\on{ev}_t$ is just the evaluation 
map $\on{ev}_t\colon R(G)\to \C$. Hence the 
quotient map 
$R(G)\cong K_0^G(\pt)\to R_k(G)\cong K_0^G(G,\A^{(k+\cox)})$
followed by \eqref{eq:compost} is $\on{ev}_t\colon R(G)\to \C$. 
\end{proof}

\subsection{Localization formula}\label{subsec:loc}
Suppose $M$ is an even-dimensional compact $G$-manifold, and $\Phi\colon M\to G$ is  
a smooth $G$-equivariant map covered by a Morita morphism 
\begin{equation}\label{eq:given}
 (\Phi,\F)\colon (M,\Cl(TM))\da (G,\A^{(k+\cox)}).
 \end{equation}
We have seen how such a Morita morphism arises for pre-quantized q-Hamiltonian $G$-spaces, 
but there are some other examples of interest. If $X\subset M$ is a $T_{k+\cox}$-invariant submanifold with $\Phi(X)\subset T$, 
one obtains by composition with \eqref{eq:B}
a $T_{k+\cox}$-equivariant Morita morphism 
\begin{equation}\label{eq:D}
(X,\Cl(TM)|_X) \da  (T,\A^{(k+\cox)}|_T) \da (\pt,\C),
\end{equation}
hence the bundle $TM|_X$ acquires a $T_{k+\cox}$-equivariant
$\Spin_c$-structure.

The morphism \eqref{eq:given} defines a map 
$K_0^G(M,\Cl(TM))\to  K_0^G(G,\A^{(k+\cox)})=R_k(G)$, and as in the q-Hamiltonian case we denote by 
$\ca{Q}(M,E)\in R_k(G)$ the image of $[E]\in K^0_G(M)=K_0^G(M,\Cl(TM))$ under this map, for any 
$G$-equivariant complex vector bundle $E$. We are interested in the numbers 
\[ \ca{Q}(M,E)(t_\lambda),\ \ \lambda\in\Lambda^*_k\] 
For $t=t_\lambda$, every component  $F\subset M^{t_\lambda}$ of the fixed point set is a $T$-invariant 
submanifold with $\Phi(F)\subset G^{t_\lambda}=T$. Hence the restriction $TM|_F$ carries a
$T_{k+\cox}$-equivariant $\Spin_c$-structure. Let 
\[ \ca{Q}(\nu_F,E)(t_\lambda)\]
be the corresponding `Atiyah-Segal-Singer' fixed point contribution. 

It may be characterized as follows. Let $U_F\cong \nu_F$ be a tubular neighborhood of $F$. The $\Spin_c$-structure on 
$TM|_F$ extends to a $T_{k+\cox}$-equivariant 
$\Spin_c$-structure on $U_F$, defining a $\Spin_c$-Dirac operator $\dirac_{U_F}$ 
with coefficients in $E|_{U_F}$. We let $\ca{Q}(\nu_F,E)(t_\lambda)$ be \emph{its} fixed point contribution, i.e.~ 
the image of its class 
$[\dirac_{U_F}]\in K_0^{T_{k+\cox}}(U_F)_{(t)}$ under 
the composition 
\[ K_0^{T_{k+\cox}}(U_F)_{(t)}\to K_0^{T_{k+\cox}}(F)_{(t)}\to K_0^{T_{k+\cox}}(\pt)_{(t)}\to \C\]
given by localization to $F$, followed by push-forward to $\pt$, followed by evaluation. 
The number $\ca{Q}(\nu_F,E)(t_\lambda)$ is given by a well-known cohomological formula, which we 
recall below.

\begin{theorem}\label{th:fix}
For all $\lambda\in\Lambda^*_k$, the number $\ca{Q}(M,E)(t_\lambda)$ is a sum over connected components
$F\subset M^{t_\lambda}$:
\[ \ca{Q}(M,E)(t_\lambda)=\sum_F \ca{Q}(\nu_F,E)(t_\lambda).\]
\end{theorem}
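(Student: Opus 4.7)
The plan is to combine the factorization of $\on{ev}_t$ given by the preceding proposition with the Localization Theorem~\ref{th:locali} applied on the $M$-side, and then to identify each component's contribution with the promised Atiyah--Segal--Singer expression. Under the FHT isomorphism, $\ca{Q}(M,E) = \Phi_*([E] \cap [M])$ lies in $R_k(G) \cong K_0^G(G, \A^{(k+\cox)})$, so by \eqref{eq:compost},
\[ \ca{Q}(M,E)(t) \;=\; \on{ev}_t\Big(p_* \circ (\iota_T)_*^{-1}\big(\Phi_*([E] \cap [M])\big)\Big), \]
where $(\iota_T)_*^{-1}$ and $p_*$ are applied after localization at $t = t_\lambda$.

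First I would apply the Localization Theorem to the $T_{k+\cox}$-action on $M$: the inclusion $\iota\colon M^t \hra M$ induces an isomorphism on the modules localized at $t$. Since $t$ is regular in $G$, the equivariance of $\Phi$ forces $\Phi(M^t)\subset G^t=T$, so $\Phi$ restricts to $\Phi^t\colon M^t \to T$ covered by $(\Phi^t, \F|_{M^t})$. Naturality of the push-forward in twisted equivariant $K$-homology then yields a commutative diagram
\[ \begin{CD}
K_0^{T_{k+\cox}}(M^t, \Cl(TM)|_{M^t})_{(t)} @>{(\Phi^t)_*}>> K_0^{T_{k+\cox}}(T, \A^{(k+\cox)}|_T)_{(t)}\\
@V{\iota_*}VV @VV{(\iota_T)_*}V \\
K_0^{T_{k+\cox}}(M, \Cl(TM))_{(t)} @>{\Phi_*}>> K_0^{T_{k+\cox}}(G, \A^{(k+\cox)})_{(t)}
\end{CD} \]
in which both vertical arrows are isomorphisms. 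Combined with the formula above, this produces
\[ \ca{Q}(M,E)(t) \;=\; \on{ev}_t\Big(p_*\circ (\Phi^t)_*\big(\iota_*^{-1}([E]\cap [M])\big)\Big). \]

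Next, decompose $M^t = \bigsqcup_F F$ and invoke additivity of $K$-homology; it then suffices to show that the contribution of each component equals $\ca{Q}(\nu_F,E)(t)$. By the excision remark following Theorem~\ref{th:locali}, the restriction $\iota_*^{-1}([E]\cap[M])|_F$ is the image, under the localized restriction map to a tubular neighborhood $U_F\cong\nu_F$, of the cap product $[E|_{U_F}]\cap[U_F]$. Moreover, the induced $T_{k+\cox}$-equivariant $\Spin_c$-structure on $TM|_F$ --- which extends to $U_F$ --- is by construction the composition of $(\Phi|_{U_F},\F|_{U_F})\colon (U_F,\Cl(TU_F))\da (T,\A^{(k+\cox)}|_T)$ with $(p,\D)\colon (T,\A^{(k+\cox)}|_T)\da(\pt,\C)$. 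Consequently $p_*\circ (\Phi^t|_F)_*$, restricted to the $F$-summand, agrees with the push-forward to a point determined by that $\Spin_c$-structure; evaluating at $t$ yields exactly the fixed point contribution $\ca{Q}(\nu_F,E)(t_\lambda)$ as defined in Section~\ref{subsec:loc}.

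The main obstacle I expect is the last identification: one needs the compatibility of Kasparov's fundamental class $[M]$ with restriction to a fixed-point submanifold together with its normal bundle embedding, i.e.~that $\iota_*^{-1}([E]\cap[M])$ agrees, under excision and the Thom isomorphism between $K_0^{T_{k+\cox}}(U_F,\Cl(TU_F))$ and $K_0^{T_{k+\cox}}(F,\Cl(TM)|_F)$, with the class of the twisted $\Spin_c$-Dirac operator on $U_F$. In the untwisted case this is essentially the $K$-homological content of the Atiyah--Segal--Singer theorem, contained in Higson--Roe \cite{hig:ana}; the key technical verification is that the argument passes through unchanged when $\Cl(TM)$ is retained as a coefficient Morita bi-module rather than trivialized by a global $\Spin_c$-structure.
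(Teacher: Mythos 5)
Your proposal follows essentially the same route as the paper: the same commutative diagram combining the factorization of $\on{ev}_t$ with Atiyah--Segal localization on the $M$-side, the same excision/tubular-neighborhood reduction to each component $F$, and the same key identification (via the Morita trivialization $(p,\D)$ composed with $(\Phi,\F)$) of the induced $\Spin_c$-structure on $U_F$ and of $\iota_*^{-1}([E]\cap[M])|_F$ with the class of the twisted Dirac operator. The ``main obstacle'' you flag is exactly what the paper handles via Proposition~\ref{prop:roe}, so your concern is well-placed and the argument is complete.
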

\begin{proof}
We may compute $\ca{Q}(M,E)(t)$ for $t=t_\lambda$ using the commutative diagram, 
\[ \begin{CD} K_0^G(M,\Cl(TM))_{(t)} @>>{\Phi_*}>
  K_0^G(G,\A^{(k+\cox)})_{(t)}@>>{\on{ev}_t}> \C\\
  @VVV @VVV @AA{\on{ev}_t}A\\
  \bigoplus_F K_0^{T_{k+\cox}}(F)_{(t)} @>>{\bigoplus_F (\Phi|_F)_*}>
  K_0^{T_{k+\cox}}(T)_{(t)}@>>>
  K_0^{T_{k+\cox}}(\pt)_{(t)}
\end{CD}\]
where the first two vertical maps are given by  Atiyah-Segal 
localization. Extend the $\Spin_c$-structure on $TM|_F$ to a $T_{k+\cox}$-equivariant  
$\Spin_c$-structure on a tubular neighborhood $U_F\cong \nu_F$, as above.  The $F$-summand of the left vertical map 
factors as a composition, 
\[ K_0^G(M,\Cl(TM))_{(t)}\to K_0^{T_{k+\cox}}(U_F,\Cl(TU_F))_{(t)}\cong K_0^{T_{k+\cox}}(U_F)_{(t)}\to 
K_0^{T_{k+\cox}}(F).\]
The first map takes $[E]\cap [M]$ to $[E|_{U_F}]\cap [U_F]\in K_0^{T_{k+\cox}}(U_F,\Cl(TU_F))_{(t)}$. The isomorphism 
with $K_0^{T_{k+\cox}}(U_F)_{(t)}$, defined by the $\Spin_c$-structure, takes $[E|_{U_F}]\cap [U_F]$ to the class 
$[ \dirac_{U_F}]$
of the Dirac operator with coefficients in $E$,  see Proposition \ref{prop:roe}. The map to $K_0^{T_{k+\cox}}(F)$ is its localization; if we compose further with the map to $K_0^{T_{k+\cox}}(\pt)$ 
we get the fixed point contributions $\ca{Q}(\nu_F,E)(t_\lambda)$. 

%Using the identification \eqref{eq:KK} to express K-homology in terms of K-theory, 
%we may write the left vertical map as 
%
%\[  K^0_{G,cp}(TM,\pi_M^*\Cl(TM))_{(t)}\to  K^0_{T_{k+\cox},cp}(TF)_{(t)}\]
%
%given as pull-back to $TF$ followed by multiplication by the inverse of $\pi_F^*\lambda^{-1}(\nu_F\otimes\C)$. 
%This map factors through the K-theory of $TU$, where $U$ is a closed $T_{k+\cox}$-equivariant 
%tubular neighborhood of $F$ (thus $U$ is a manifold with boundary). 
%We may extend the $\Spin_c$-structure on $TM|_F$ to a $T_{k+\cox}$-equivariant $\Spin_c$-structure on $U$, and 
%are thus simply computing the fixed point contributions for the $\Spin_c$-Dirac operator $\dirac_E$, defined over $U$. 
%The latter are given by  the usual Atiyah-Segal-Singer formulas. 
%
\end{proof}

To work out some examples, we will need the cohomological form of the fixed point 
contributions.  These are given by integrals
\[ \ca{Q}(\nu_F,E)(t_\lambda)=\int_F \f{\wh{A}(F)\, 
\Ch(E|_F,t_\lambda)\ \Ch(\ca{L}_F,t_\lambda)^{1/2}}{\D_\R(\nu_F,t_\lambda)}.\]
The terms in this integral are de Rham cohomology classes in $H^\bullet(F,\R)$, defined as follows.
(See e.g.~ \cite{be:he} for further details.) 
\begin{enumerate}
\item[(i)]
The class
$\wh{A}(F)$ is  given on the level of forms as 
${\det}^{-1/2}_\R(j(\f{1}{2\pi}R_{TF}))$, where $R_{TF}\in \Om^2(F,\End(TF))$ is the curvature form 
of a Riemannian connection, and $j(z)=\f{\on{sinh}(z/2)}{z/2}$. 
\item[(ii)]
$\on{Ch}(E|_F,t)$ is given on the level of forms as 
$\on{tr}_\C\big(A_E(t) \exp(\f{1}{2\pi} R_E)\big)$ where $A_E(t)\in \Gamma(\on{U}(E|_F))$ is the action of $t$,  
$R_E\in \Om^2(F,\End(E|_F))$ is the curvature form of a Hermitian connection on $E|_F$, and $\on{tr}_\C$ denotes the trace of a complex endomorphism. 
\item[(iii)]
The class $D_\R(\nu_F,t)$ is given on the level of 
forms as 
\[ (\sqrt{-1})^{\hh \on{rk}(\nu_F)}{\det}^{1/2}_\R\Big(1-A_F(t)^{-1}\exp(\f{1}{2\pi} R_F)\Big)\]
where $A_F(t)\in \Gamma(\on{O}(\nu_F))$ is the action of $t$ on $\nu_F$,  
$R_F\in \Om^2(F,\mf{o}(\nu_F))$ is the curvature form, and using the positive square root of the 
determinant. 
\item[(iv)]
Finally let $\L_F\to F$ be the line bundle associated to the $\Spin_c$-structure on $TM|_F$. 
The class $\on{Ch}(\L_F,t)$ is defined as in (ii), but we will need to specify a 
square root.  Letting $c_1(\ca{L}_F)$ be the 
first Chern form, and $\zeta_F(t)\in\U(1)$ the (locally constant) action of $t$ on $\L_F$
we put
\[\Ch(\ca{L}_F,t)^{1/2}=\zeta_F(t)^{1/2}\exp(\hh c_1(\L_F)).\] 
To fix the sign of the square root of the phase factor, choose a $t$-invariant orthogonal complex structure $J$
on $T_xM$, for any $x\in F$. Thus $T_xM$ becomes a complex vector space, and $t$ acts as a unitary transformation 
$A_x(t)\in \on{U}(T_xM)$. The given $\Spin_c$-structure on $T_xM$ and the $\Spin_c$-structure defined by $J$ differ 
by a Hermitian line $L$, on which $t$ acts by some phase factor $\kappa(t)\in\U(1)$. 
We put  
\[ \zeta_F(t)^{1/2}=\kappa(t)\ {\det}_\C(A_x(t)^{1/2})\]
where $A_x(t)^{1/2}\in \on{U}(T_xM)$ is the unique square root for which all eigenvalues are 
of the form $e^{i\phi}$ with $0\le \phi<\pi$. (See \cite{al:fi}.)
\end{enumerate}

\subsection{Normalized form of the fixed point contributions}
As mentioned before, the localization procedure described above depends on the choice of the Morita trivialization 
\eqref{eq:B}. Changing \eqref{eq:B} by a line bundle $N\to T$ replaces $\hh c_1(\L_F)$ with $\hh c_1(\L_F)+\Phi_F^* c_1(N)$. 
While the individual fixed point contributions may be affected by this change, their sum always computes 
$\ca{Q}(M,E)(t)$. Since the sum is a priori a polynomial in the coefficients of $c_1(N)$ 
(after choosing a basis of $H^2(T,\Z)$), we see that we may in fact replace $\hh c_1(\L_F)$ with $\hh c_1(\L_F)-\Phi_F^*x$, for any $x\in H^2(T,\R)$. By the discussion of Section \ref{subsec:res}, the choice of Morita trivialization \eqref{eq:B}
determines a class in $H^3(G,T,\Z)$. Let $x$ be its first component under 
the inclusion $H^3(G,T,\Z)\to H^2(T,\R)\oplus H^3(G,\R)$. 
Twisting  by a line bundle $N$ replaces $x$ with $x+c_1(N)$; hence $\hh c_1(\L_F)-x$ is independent of the choice of \eqref{eq:B}. By the results from Section \ref{subsec:res} the class $x$ lies in $\hh H^2(T,\Z)$. It is thus of the form 
$\hh c_1(R)$ for a line bundle $R\to T$, and we conclude that $\L_F\otimes\Phi_F^* R^{-1}$ is independent of the 
choice of \eqref{eq:B}, up to isomorphism. 

Taking this to be our new definition of $\L_F$, we obtain a `normalized form' of the fixed point contributions, 
which no longer depends on any choices. Notice however that the new $\L_F$ need not be the line bundle associated to a $\Spin_c$-structure on $TM|_F$, in general. 

It is this `normalized' form of the fixed point 
contributions $\chi(\nu_F,t)$ that identifies our localization formula with the expressions in 
\cite[Theorem 4.3]{al:fi}. One of the advantages 
of this canonical form is the invariance under the Weyl group: Given $w\in W$ and a fixed point component 
$F\subset M^t$ of $t=t_\lambda$, its image $w.F$ is again a fixed point component (possibly equal to $F$), 
and one has 
\[ \chi(\nu_{w.F},t)=\chi(\nu_F,w^{-1}.t).\]

\section{Examples}
In this Section, we will work out the fixed point contributions in various examples. 
The quantization of conjugacy classes $\Co\subset G$ and the double $D(G)$, 
discussed in Section \ref{subsec:condo}, were already considered 
in \cite{al:fi} from the perspective of Hamiltonian loop group actions. 
They are included here due to their significance, and since the approach in 
this paper is more direct.  The computation 
for the q-Hamiltonian $\SU(n)$-space $S^{2n}$ 
in Section \ref{subsec:sphere} is new. 

We refer to Appendix \ref{app:fus} for some of the Lie-theoretic notions used in this Section.

\subsection{Conjugacy classes, double}\label{subsec:condo}
Let $G$ be a compact, simply connected Lie group, and $k$ a given integral level, with $k_i\ge 0$.  We identify the set of conjugacy classes in $G$ with the elements of the fundamental alcove $\Alc\subset\t$, 
thus $\xi\in\Alc$ labels the conjugacy class $\Co$ of $a=\exp(\xi)$. 
Recall that $\Co$ is pre-quantized at level $k$ if and only if 
$B_k^\flat(\xi)\in\Lambda^*_k$. On the other hand, $R_k(G)$ has an additive basis 
$\tau_\mu,\ \mu\in\Lambda^*_k$.  
\begin{proposition}
Let $\xi\in \Alc$ with $\mu:=B_k^\flat(\xi)\in\Lambda^*_k$, and denote by 
$\Co=G.\exp(\xi)$ the conjugacy class labeled by $\xi$. Then
\[ \ca{Q}(\Co)=\tau_\mu.\]
\end{proposition}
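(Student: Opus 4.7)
The strategy is to verify $\ca{Q}(\Co)(t_\lambda)=\tau_\mu(t_\lambda)$ for every $\lambda\in\Lambda^*_k$, which by Proposition \ref{prop:orth} uniquely determines an element of $R_k(G)$. Since $\tau_\mu$ is the image under $R(G)\to R_k(G)$ of the character $\chi_\mu$ of the irreducible representation with highest weight $\mu$, the target value is $\chi_\mu(t_\lambda)$, given by the Weyl character formula. I will therefore compute $\ca{Q}(\Co)(t_\lambda)$ using the fixed-point formula of Theorem \ref{th:fix} and recognize the result as $\chi_\mu(t_\lambda)$.

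Since $t_\lambda$ is regular with centralizer $T$, the fixed set $\Co^{t_\lambda}$ equals $\Co\cap T=W\cdot a$ with $a=\exp(\xi)$, a finite set of isolated points indexed by $W/W_\xi$ (where $W_\xi$ denotes the stabilizer of $\xi$ in $W$). At each fixed point $wa$, the tangent space $T_{wa}\Co$ decomposes into root spaces $\g_\alpha$ not fixed by $\Ad(wa)$, on which $t_\lambda$ acts by characters determined by $(\lambda+\rho)/(k+\cox)$. Extracting the cohomological contributions of Section \ref{subsec:loc}, the denominator $\D_\R(T_{wa}\Co,t_\lambda)$ contributes the Weyl denominator at $t_\lambda$, while the numerator $\Ch(\ca{L}_F,t_\lambda)^{1/2}$ is a phase read off from the fiber at $wa$ of the line obtained by composing the Morita morphism $(\Phi,\ca{S}\otimes\E)$ with the Morita trivialization of $\A^{(k+\cox)}|_T$ from Proposition \ref{prop:restrict}. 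Tracing through the pre-quantum contribution $\E$ (which encodes $\mu=B_k^\flat(\xi)$) and the spinor contribution $\ca{S}$ (which encodes the $\rho$-shift coming from $\A^{\Spin}=\A^{(\cox)}$), this phase should equal $(-1)^w e^{w(\mu+\rho)}(t_\lambda)$; summing over the fixed points $W/W_\xi$ then reproduces the Weyl character formula for $\chi_\mu(t_\lambda)$, with the isotropy $W_\xi$ at a non-regular $\xi$ absorbed as a compatible multiplicity in the Weyl sum.

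The main obstacle is the careful identification of $\ca{L}_F|_{wa}$ and the $t_\lambda$-action on it, together with the square-root convention $\zeta_F(t_\lambda)^{1/2}$ of Section \ref{subsec:loc}(iv). One must combine four ingredients: the canonical twisted $\Spin_c$-structure $\ca{S}$ of Theorem \ref{th:ddd}, the pre-quantum bimodule $\E$ realizing the weight $\mu$, the Morita trivialization $\D$ of $\A^{(k+\cox)}|_T$ from Proposition \ref{prop:restrict} (which introduces the normalization $1/(k+\cox)$ via $B_{k+\cox}^\sharp$), and the sign prescription for $\zeta_F^{1/2}$. Producing the crucial sign $(-1)^w$ at each fixed point from the spinor action, and verifying that all phases combine to exactly the Weyl numerator, is the technical heart of the calculation.
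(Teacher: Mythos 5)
Your overall strategy coincides with the paper's: evaluate $\ca{Q}(\Co)$ at the regular elements $t_\lambda$ via the fixed-point formula (Theorem \ref{th:fix}), identify $\Co^{t_\lambda}$ as a finite $W$-set, and use the orthogonality relations (Proposition \ref{prop:orth}) to conclude $\ca{Q}(\Co)=\tau_\mu$. However, the parts you flag as ``the technical heart of the calculation'' and defer are exactly the substance of the proof, and one of the steps as you state it is not quite right.

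Specifically, your claim that the per-fixed-point phase ``should equal $(-1)^w e^{w(\mu+\rho)}(t_\lambda)$'' only makes sense when the stabilizer $W_\xi$ is trivial, i.e.\ when $\xi$ lies in the interior of the alcove. For $\xi$ on the boundary (which is the generic situation for level $k$ weights), $\Co \cong G/G_\sigma$ with $G_\sigma \supsetneq T$, so the normal space $\nu_F \cong \g/\g_\sigma$ at $F=\{\exp\xi\}$ is a proper subspace of $\g/\t$, and the single fixed-point contribution is not a single Weyl term but the partial sum
\[
\chi(\nu_F,t)=\f{1}{J(t)}\sum_{v\in W_\sigma}(-1)^{l(v)}t^{v(\mu+\rho)},
\]
where $W_\sigma=W_\xi$ and $\sigma$ is the face containing $\xi$. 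Getting to this expression requires choosing a $T$-equivariant $\Spin_c$-structure on $\g/\g_\sigma$ from a compatible choice of positive roots $\mf{R}_{\sigma,+}\subset\mf{R}_+$ (via the short exact sequence $0\to\g_\sigma/\t\to\g/\t\to\g/\g_\sigma\to 0$), computing $\zeta(t)^{1/2}=t^{\rho-\rho_\sigma}$ and $\D_\R=J/J_\sigma$, and then crucially using that $\mu\in B_k^\flat(\sigma)$ and $\rho-\rho_\sigma\in B_\cox^\flat(\sigma)$ imply $\mu+\rho-\rho_\sigma\in B_{k+\cox}^\flat(\sigma)$, so that $t_\lambda^{\mu+\rho-\rho_\sigma+v\rho_\sigma}=t_\lambda^{v(\mu+\rho)}$ for $v\in W_\sigma$. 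This arithmetic with $B_{k+\cox}$-translates of $\sigma$ is what makes the Weyl denominator reorganization work, and it is entirely absent from your plan. Your phrase about the isotropy being ``absorbed as a compatible multiplicity in the Weyl sum'' gestures at the right outcome without supplying the argument; as written, the proposal does not establish the proposition.
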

\begin{proof}
For any open face $\sig\subset\Alc$ of the alcove, let $G_\sig$ be the centralizer of points in $\exp(\sig)\subset T$. It is a connected subgroup of 
$G$ containing $T$. Let $W_\sig\subset W$ be the Weyl groups and 
$\mf{R}_\sig\subset \mf{R}$ the set of roots of $G_\sig\subset G$. 
Define a set $\mf{R}_{\sig,+}$ of \emph{positive roots} to consist of 
all $\alpha\in\mf{R}_\sig$ that are non-negative on the subset $\Alc-\zeta$, 
for any $\zeta\in\sig$. The corresponding Weyl chamber is the cone over 
$\Alc-\zeta$. The set of positive roots $\mf{R}_{\sig,+},\mf{R}_+$
determine $T$-invariant complex structure on $\g_\sig/\t,\ \g/\t$, 
hence also $\Spin_c$-structures. By the exact sequence 
$0\to \g_\sig/\t\to \g/\t\to \g/\g_\sig\to 0$, the 
space $\g/\g_\sig$ inherits a $T$-equivariant $\Spin_c$-structure.
(In general, this $\Spin_c$-structure cannot be made $G_\sig$-equivariant, hence does not defined a $\Spin_c$-structure on $G/G_\sig$.) For $t\in T$, the 
phase factor and denominator factor for the `fixed point contribution' 
from $0$,  corresponding to the $\Spin_c$-structure on $\g/\t$, are 
\[ \zeta(t)^{1/2}=t^\rho,\ \ \ca{D}_\R(\g/\t,t)=J(t)\]
where $\rho$ is the half-sum of positive roots and $J(t)=\sum_{w\in W}(-1)^{l(w)}t^{w\rho}$ is the Weyl denominator. 
The contributions for $\g_\sig/\t$ are given similarly, with 
$\rho$ and $J$ replaced by  
\[ \rho_\sig=\hh \sum_{\alpha\in\mf{R}_{\sig,+}}\alpha,\ \ \ 
J_\sig(t)=\sum_{v\in W_\sig}(-1)^{l(v)}t^{v\rho_\sig}
%=\prod_{\alpha\in\mf{R}_{\sig,+}}\ (t^{\alpha/2}-t^{-\alpha/2})
\]
respectively,  
and the contributions for $\g/\g_\sig$ are the quotients of those for $\g/\t$, $\g/\g_\sig$. (Here $\rho_\sig$ need not be a weight, in general. Instead, we define $t^{\rho_\sig}$ as the square root of the determinant of the action of $t$ on $\g/\g_\sig$, cf.~ Section \ref{subsec:loc} (iv).) Now take $\sig$ to be the open face containing the given element $\xi$. 
Then $\Co=G/G_\sig$ as a $G$-manifold, and the fixed point set 
of $t=t_\lambda,\lambda\in\Lambda^*_k$ is 
$\Co^{t}=W/W_\sig$ (identified with the 
$W$-orbit of $\exp(\xi)$). Thus 
\[ \ca{Q}(\Co)(t)=\f{1}{|W_\sig|} \sum_{w\in W}  \chi(\nu_F,w^{-1}t),\]
where $F=\{\exp\xi\}$. 
The $\Spin_c$-structure on $T\Co|_F=T_{\exp\xi}\Co\cong \g/\g_\sig$  
is the restriction of the $T$-equivariant 
$\Spin_c$-structure on $\g/\g_\sig$ described above, twisted by the 
1-dimensional $T$-representation $\C_\mu$ of weight $\mu$. 
(See \cite{me:con} for a detailed discussion.)
Hence, the fixed point contribution of $F=\{\exp\xi\}$ is 
\[ 
\chi(\nu_F,t)= t^{\mu+\rho-\rho_\sig} \f{J_\sig(t)}{J(t)}. 
\]
We now use that $\mu\in B_k^\flat(\sig)$ and $\rho-\rho_\sig\in
B_{\cox}^\flat(\sig)$, hence $\mu+\rho-\rho_\sig\in
B_{k+\cox}^\flat(\sig)$.  The action of $v\in W_\sig$ on
$\exp(\sig)\subset T$ is trivial, hence the action on $\sig\subset\t$
is a shift by some vector in $\Lambda$. It follows that
$v(\mu+\rho-\rho_\sig)-(\mu+\rho-\rho_\sig)\in B_{k+\cox}(\Lambda)$,
hence
$t^{\mu+\rho-\rho_\sig+v\rho_\sig}=t^{v(\mu+\rho)}$. This gives 
\[ \chi(\nu_F,t)=\f{1}{J(t)}\sum_{v\in W_\sig}(-1)^{l(v)} t^{v(\mu+\rho)}.\]
Using the anti-invariance 
$J(w^{-1}t)=(-1)^{l(w)} J(t)$ and the Weyl character formula, 
we hence obtain 
\[ \sum_{w\in W} \chi(\nu_F,w^{-1}t)=
\f{1}{J(t)} \sum_{w\in W} \sum_{v\in W_\sig}(-1)^{l(wv)} t^{wv(\mu+\rho)}
=|W_\sig|\,\chi_\mu(t)=
|W_\sig|\,\tau_\mu(t).\]
\end{proof}

Let $N_{\mu,\nu,\lambda}=(\tau_\mu\tau_\nu\tau_\lambda)^G\in\Z$ 
be the structure constants of $R_k(G)$. Thus 
$\tau_\mu\tau_\nu=\sum_\lambda N_{\mu,\nu,\lambda} \tau_\lambda^*$. 
\begin{proposition}[Double]
The level $(k,k)$ quantization of the q-Hamiltonian $G\times G$-space 
$\mathbf{D}(G)\cong M(\Sig_0^2)$ is given by 
\[ \ca{Q}(\mathbf{D}(G))=\sum_{\mu\in \Lambda^*_k} \tau_\mu\otimes \tau_\mu^*\]
as an element of $R_k(G)\otimes R_k(G)$. The quantization of the fused double $D(G)$ is thus 
\[ \ca{Q}(D(G))=\sum_{\mu,\nu\in \Lambda^*_k}
N_{\mu,*\mu,*\nu}\ \tau_\nu
\in R_k(G)\]
\end{proposition}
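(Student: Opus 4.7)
The plan is to determine $\ca{Q}(\mathbf{D}(G))$ by pairing against the basis $\{\tau_\mu \otimes \tau_\nu\}$ of $R_k(G) \otimes R_k(G)$ via quantization commutes with reduction applied to an auxiliary fusion with two conjugacy classes; the formula for $\ca{Q}(D(G))$ will then follow from the fusion property applied to $D(G) = \mathbf{D}(G)_{\on{fus}}$.

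First I would write $\ca{Q}(\mathbf{D}(G)) = \sum_{\mu,\nu} c_{\mu\nu}\, \tau_\mu \otimes \tau_\nu$ with unknown coefficients. For each pair $\alpha, \beta \in \Lambda^*_k$, form the q-Hamiltonian $G\times G$-space
\[ M_{\alpha,\beta} := \mathbf{D}(G) \times \Co_{*\alpha} \times \Co_{*\beta}, \]
where $\Co_{*\alpha}$ is regarded as a $G\times G$-space with only the first factor acting nontrivially, and $\Co_{*\beta}$ with only the second; the fusion product in the $G\times G$-sense gives moment map $(a,b,c_1,c_2) \mapsto (ab\,c_1,\ a^{-1}b^{-1}c_2)$. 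By Theorem~\ref{th:prop}(b) together with the previously established conjugacy class computation $\ca{Q}(\Co_\lambda) = \tau_\lambda$,
\[ \ca{Q}(M_{\alpha,\beta}) = \ca{Q}(\mathbf{D}(G)) \cdot (\tau_\alpha^* \otimes \tau_\beta^*). \]

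Next, apply quantization commutes with reduction to the $G \times G$-action. Setting the moment map to $(e,e)$ forces $c_1 = (ab)^{-1}$ and $c_2 = ab$, whence $ab \in \Co_\alpha \cap \Co_{*\beta}$. This intersection is nonempty precisely when $\alpha = *\beta$, and in that case the $G\times G$-orbit space is a single pre-quantized point with trivial quantization datum, giving
\[ [\ca{Q}(\mathbf{D}(G)) \cdot (\tau_\alpha^* \otimes \tau_\beta^*)]^{G \times G} = \delta_{\beta,\, *\alpha}. \]
On the other hand, direct expansion using $(\tau_\mu \tau_\alpha^*)^G = \delta_{\mu,\alpha}$ gives the same quantity as $c_{\alpha\beta}$. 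Thus $c_{\alpha\beta} = \delta_{\beta,*\alpha}$, proving $\ca{Q}(\mathbf{D}(G)) = \sum_\mu \tau_\mu \otimes \tau_\mu^*$. For the fused double, Theorem~\ref{th:prop}(b) identifies $\ca{Q}(D(G))$ with the image of $\ca{Q}(\mathbf{D}(G))$ under the multiplication map of $R_k(G)$, so $\ca{Q}(D(G)) = \sum_\mu \tau_\mu \tau_\mu^*$; expanding via $\tau_\mu \tau_\nu = \sum_\sigma N_{\mu,\nu,*\sigma}\, \tau_\sigma$ and specializing to $\nu = *\mu$ yields the stated formula.

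The main obstacle is the application of quantization commutes with reduction in the step above: the $G\times G$-action on the zero fiber is not free, since at any point with $ab$ regular the stabilizer is isomorphic to a maximal torus. The reduced space is set-theoretically a single point, but as a stratified quotient it carries nontrivial isotropy, so the QCR principle must be invoked in its singular form. This can be handled either by the partial desingularization framework of \cite{me:si} adapted to q-Hamiltonian spaces, or alternatively by circumventing reduction entirely and computing $\ca{Q}(\mathbf{D}(G))(t_{\lambda_1}, t_{\lambda_2})$ directly via the localization formula of Theorem~\ref{th:fix}: the fixed point set is empty unless the $t_{\lambda_i}$ are Weyl-conjugate (i.e.\ $\lambda_1 = \lambda_2$), in which case it equals $T \times T$, and a Weyl-character-type computation reproduces $\sum_\mu \tau_\mu(t_{\lambda_1})\overline{\tau_\mu(t_{\lambda_2})}$. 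The latter route is the one implicitly taken in \cite{al:fi}.
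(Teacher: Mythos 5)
Your primary route through quantization-commutes-with-reduction has a real gap, and the alternative route you gesture at is in fact the paper's route, but you leave the decisive computation unstated.

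On the QCR route: within this paper, QCR for q-Hamiltonian spaces is not a proven theorem but only a remark, which explicitly defers to \cite{al:fi} and notes that the proof there is ``fairly complicated.'' Moreover the argument in \cite{al:fi} establishes QCR \emph{for the fixed-point-formula definition} of $\ca{Q}$, and the agreement of that definition with the $K$-homological one is exactly the content of Theorem~\ref{th:fix}. So appealing to QCR in order to sidestep the fixed-point computation does not actually buy you anything logically: the fixed-point identity is upstream of QCR in the present framework. Independently, as you yourself observe, the $G\times G$-action on $\Phi^{-1}(e,e)$ is far from free (stabilizers at regular points are maximal tori), so what is needed is the stratified/partial-desingularization version of QCR. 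You assert this gives $\delta_{\beta,*\alpha}$ ``with trivial quantization datum,'' but this is precisely the Verlinde number for $\Sigma_0^2$ with two markings and cannot simply be read off from the set-theoretic quotient being a point. That step needs a justification you do not supply.

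On the localization route, which is the paper's argument: you correctly identify that $(\mathbf{D}(G))^{(t_\lambda,t_{\lambda'})}$ is empty unless $\lambda=\lambda'$, in which case it is $F=T\times T$, and that the outcome should be $\sum_\mu\tau_\mu(t_\lambda)\overline{\tau_\mu(t_{\lambda'})}$. What is missing is the evaluation of the single fixed-point contribution. The paper shows $\wh{A}(F)=1$, $D_\R(\nu_F,t_\lambda)=|J(t_\lambda)|^2$, $\zeta_F(t_\lambda)^{1/2}=1$, and the crucial identity
\[
\int_F e^{\frac{1}{2}c_1(\L_F)}=|T_{k+\cox}|,
\]
which rests on the observation that the square of the Morita isomorphism $(M,\Cl(TM))\da(G,\A^{(k+\cox)})$ is a pre-quantization at level $2(k+\cox)$, so $c_1(\L_F)=2(k+\cox)\om_F^{(1)}$ in cohomology. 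This yields $\ca{Q}(\mathbf{D}(G))(t_\lambda,t_{\lambda'})=\frac{|T_{k+\cox}|}{|J(t_\lambda)|^2}\delta_{\lambda,\lambda'}$, and only then do the orthogonality relations (Proposition~\ref{prop:orth}) produce $\sum_\mu\tau_\mu\otimes\tau_\mu^*$. Without that integral your ``Weyl-character-type computation'' is unsupported. The passage from $\ca{Q}(\mathbf{D}(G))$ to $\ca{Q}(D(G))$ via fusion is fine and matches the paper.
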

\begin{proof}
We may assume that $G$ is simple. Recall that the $G\times G$-action on $\mathbf{D}(G)=G\times G$ is $(g_1,g_2).(a,b)
=(g_1 a g_2^{-1},\ g_2 b g_1^{-1})$. We hence see that for $\lambda,\lambda'\in \Lambda^*_k$, the
fixed point set of $t_{\lambda,\lambda'}=(t_\lambda,t_{\lambda'})$ is trivial unless $\lambda=\lambda'$, in which case it is 
$F=T\times T$. In particular, $\wh{A}(F)=1$ in cohomology. The normal bundle $\nu_F$ is a trivial bundle $\g/\t\times\g/\t$; the factor $D_\R(\nu_F,t_\lambda)$ is just $|J(t_\lambda)|^2$. Hence the fixed point contribution is 
\[\ca{Q}(\nu_F,t_\lambda,t_\lambda)=|J(t_\lambda)|^{-2}\zeta_F(t_\lambda)^{1/2}\int_F e^{\hh c_1(\L_F)}.\]
One has $\zeta_F(t_\lambda)^{1/2}=1\in\U(1)$, since it is constant along $F$, 
and equals $1$ at $(e,e)\in F$. To compute the integral of $e^{\hh c_1(\L_F)}$, note that the Morita trivialization $(M,\C)\da (G,\A^{2(k+\cox)})$ obtained by squaring the Morita isomorphism $(M,\Cl(TM))\da (G,\A^{(k+\cox)})$ serves as a pre-quantum line bundle at level $2(k+\cox)$. In particular, $c_1(\L_F)=2(\cox+k)\om_F^{(1)}$ in cohomology, where 
$\om_F^{(1)}$ is the symplectic form on $F$ corresponding to the level $k=1$. It follows that
\[ \int_F e^{\hh c_1(\L_F)}=(k+\cox)^{\dim T}\vol(F)=|T_{k+\cox}|.\]
(Cf.~ \cite[Proposition 5.2]{al:fi}.) That is, 
\[ \ca{Q}(\mathbf{D}(G))(t_\lambda,t_{\lambda'})=\f{|T_{k+\cox}|}{|J(t_\lambda)|^2}\delta_{\lambda,\lambda'}.\]
The Proposition now follows using the orthogonality relations (Proposition \eqref{prop:orth}) for level $k$ characters.
By fusing the two $G$-factors, it follows that the quantization of the fused double $D(G)=M(\Sig_1^1)$ is 
$\ca{Q}(D(G))=\sum_{\mu\in \Lambda^*_k}\tau_\mu^* \tau_\mu$. 
\end{proof}

More generally the quantization of $M(\Sig_h^r)$ of the moduli space for a surface of genus $h$ with 
$r$ boundary components is obtained by fusing $h$ copies of $M(\Sig_1^1)$ with $r-1$ copies of 
$M(\Sig_0^2)$. For example, 
\[ \begin{split}
\ca{Q}(M(\Sig_0^3))&=\sum_{\mu,\nu} \tau_\mu \otimes\tau_\nu \otimes \tau_\mu^*\tau_\nu^*\\
&=\sum_{\mu,\nu,\lambda} N_{\mu,\nu,\lambda}\ \tau_\mu \otimes \tau_\nu \otimes \tau_\lambda.
\end{split}\]
(Here we used that $N_{*\mu,*\nu,*\lambda}=N_{\mu,\nu,\lambda}$.) Alternatively, one may verify this formula by direct 
application of the localization formula.

\subsection{The sphere $S^{2n}$}\label{subsec:sphere}
We will next calculate the level $k$ quantization of the sphere $S^{2n}$, viewed as a q-Hamiltonian 
$\SU(n)$-space. Let $\varpi_1,\ldots,\varpi_{n-1}$ be the fundamental weights of $\SU(n)$. Thus 
$\varpi_i$ is the dominant weight of the (irreducible) representation of $\SU(n)$ on $\wedge^i \C^n$.  
For the proof of the following result, we recall the following description of the 
fusion ideal \cite{bou:pre,gep:fus} 
\begin{equation}\label{eq:theideal}
I_k(\SU(n))=\l \chi_{(k+1)\varpi_1},\ldots,\chi_{(k+n-1)\varpi_1} \r,\end{equation}
the ideal in $R(\SU(n))$ generated by the characters inside the brackets. 
\begin{proposition}
The level $k$ quantization of the q-Hamiltonian $\SU(n)$-space $S^{2n}$ is given by 
\[ \ca{Q}(S^{2n})=\sum_{i=0}^{k} \tau_{i\varpi_1}.\] 
\end{proposition}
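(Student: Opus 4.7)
The plan is to apply the fixed-point formula of Theorem~\ref{th:fix} to $M=S^{2n}\subset\C^n\oplus\R$ at $t=t_\lam$ for $\lam\in\Lam^*_k$, compute the two isolated fixed-point contributions, and match their sum with $\sum_{i=0}^k\chi_{i\varpi_1}(t_\lam)$; the conclusion in $R_k(\SU(n))$ then follows from faithfulness of evaluation at the $t_\lam$ on the fusion ring.  Since $\SU(n)$ acts by the standard representation on $\C^n$ and trivially on $\R$, and $t_\lam\in T$ is regular, the fixed set $(S^{2n})^{t_\lam}=\{p_+,p_-\}$ consists of the two poles $p_\pm=(0,\pm 1)$; both are $\SU(n)$-fixed, so $\Phi(p_\pm)\in Z(\SU(n))\subset T$, and the explicit 2-form and moment map from \cite{hu:imp} determine these two central elements and show that $T_{p_\pm}S^{2n}\cong\C^n$ as complex $\SU(n)$-representations with opposite induced orientations at the two poles.

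At each isolated fixed point Theorem~\ref{th:fix} reduces the quantization to
\[
\ca{Q}(S^{2n})(t_\lam)=\frac{\zeta_{p_+}(t_\lam)^{1/2}}{D_\R(T_{p_+}S^{2n},t_\lam)}+\frac{\zeta_{p_-}(t_\lam)^{1/2}}{D_\R(T_{p_-}S^{2n},t_\lam)}.
\]
Letting $a_1,\dots,a_n$ denote the eigenvalues of $t_\lam$ on $\C^n$ (so $\prod_j a_j=1$), the two denominators are (up to standard phase) $\prod_j(1-a_j^{-1})$ and $\prod_j(1-a_j)$.  The phases $\zeta_{p_\pm}(t_\lam)^{1/2}$ must be tracked using the canonical twisted $\Spin_c$-structure (Theorem~\ref{th:ddd}), the pre-quantum Morita morphism at level $k$, and the $T_{k+\cox}$-equivariant Morita trivialization of $\A^{(k+\cox)}|_T$ from Proposition~\ref{prop:restrict}; the net effect is that the restriction of $\A^{\on{Spin}}\otimes\A^{\on{Preq}}$ to the central element $\Phi(p_\pm)\in T$ defines a character of $T_{k+\cox}$ which gives the explicit form of the phase.

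For the matching step, using $\sum_{i\ge 0}\chi_{i\varpi_1}(t)z^i=\prod_{j=1}^n(1-a_jz)^{-1}$ and partial fractions at the poles $z=a_j^{-1}$, one obtains the Lagrange-type expression
\[
\sum_{i=0}^k\chi_{i\varpi_1}(t_\lam)=\sum_{j=1}^n\frac{a_j^{n-1}(a_j^{k+1}-1)}{(a_j-1)\prod_{i\ne j}(a_j-a_i)},
\]
and a residue identity (using $\prod_j a_j=1$, together with polynomial division to handle the $a_j^{n+k}$-term) collapses this sum-over-$j$ into a two-term rational function whose denominators are exactly $\prod_j(1-a_j^{\pm 1})$, matching the fixed-point sum of the previous paragraph.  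This gives $\ca{Q}(S^{2n})(t_\lam)=\sum_{i=0}^k\tau_{i\varpi_1}(t_\lam)$ for every $\lam\in\Lam^*_k$ (noting that $i\varpi_1\in\Lam^*_k$ for $0\le i\le k$), so by Proposition~\ref{prop:orth} the desired identity holds in $R_k(\SU(n))$.  Equivalently, one may verify directly via \eqref{eq:theideal} that the difference $\ca{Q}(S^{2n})-\sum_{i=0}^k\chi_{i\varpi_1}$ lies in the fusion ideal $\langle\chi_{(k+1)\varpi_1},\dots,\chi_{(k+n-1)\varpi_1}\rangle$.

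The main obstacle is pinning down the phase factors $\zeta_{p_\pm}(t_\lam)^{1/2}$: one has to carefully unwind (i) the canonical twisted $\Spin_c$-structure of Theorem~\ref{th:ddd} on $S^{2n}$, (ii) the pre-quantum Morita morphism at level $k$ coming from the explicit 2-form in \cite{hu:imp}, and (iii) the local Morita trivialization of $\A^{(k+\cox)}$ near each central element $\Phi(p_\pm)\in T$, so as to identify $\L_{p_\pm}$ precisely.  Once those phases are correct, the residue identity in the matching step is routine.
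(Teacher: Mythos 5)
Your overall plan --- localize at $t=t_\lambda$, find the two isolated fixed points at the poles, compute their contributions, and conclude via the evaluation maps $\on{ev}_{t_\lambda}$ --- is the same strategy the paper uses. However, there are two genuine gaps in the execution.

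First, you correctly flag the phase factors $\zeta_{p_\pm}(t_\lambda)^{1/2}$ as the main obstacle, but this identification is not a side issue that can be deferred; it is the entire content of the computation. The paper recalls the construction of $S^{2n}$ as two exponentials $U_\pm$ of a ball in $\C^n$, glued so that $\Phi(p_+)=e$ and $\Phi(p_-)=c=\exp(\varpi_1^\vee)\in Z(\SU(n))$. At $p_+$ the contribution is the usual $\prod_j(1-z_j^{-1})^{-1}$. At $p_-$, two corrections enter: the Morita trivialization of $\A^{(k+n)}|_T$ from Proposition~\ref{prop:restrict} was normalized to agree with the canonical $\SU(n)$-equivariant trivialization at $e$, and at $c$ it differs from the canonical one there by the specific character $t\mapsto t^{(k+n)\varpi_1}$ of $T_{k+n}$; and the chart orientation at $p_-$ is opposite, which flips the $\Z_2$-grading of the spinor module and introduces a minus sign. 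The resulting contribution is $-\,t^{(k+n)\varpi_1}\prod_j(1-z_j^{-1})^{-1}$. Your proposal keeps the same denominator at $p_+$ but switches to $\prod_j(1-a_j)$ at $p_-$; this is a legitimate bookkeeping variant (the two differ by $(-1)^n$ via $\prod_j a_j=1$), but without the explicit $t^{(k+n)\varpi_1}$ character you cannot finish.

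Second, the matching step as you describe it would not go through as stated. The two fixed-point contributions do \emph{not} sum to $\sum_{i=0}^k\chi_{i\varpi_1}(t)$ as a rational identity in $t\in T$: the identity $t^{(k+n)\varpi_1}\chi_{m\varpi_1}(t)=\chi_{(m+k+n)\varpi_1}(t)$, which is what makes the telescoping possible, holds only at the special points $t=t_\lambda$ (using $t_\lambda^{w\mu}=t_\lambda^\mu$ for $\mu\in B_{k+\cox}^\flat(\Lambda)$). The paper's route introduces an auxiliary parameter $u$ and uses this identity to rewrite the $p_-$ contribution as $-\sum_m u^{m+k+n}\chi_{(m+k+n)\varpi_1}(t_\lambda)$, so the infinite series from $p_+$ and $p_-$ cancel in degrees $\geq k+n$, leaving a \emph{polynomial} $\sum_{m=0}^{k+n-1}u^m\chi_{m\varpi_1}(t_\lambda)$ in which one may safely set $u=1$. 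Only then is the fusion ideal relation \eqref{eq:theideal} used, to kill the terms $m=k+1,\dots,k+n-1$. Your ``residue identity'' would have to reproduce exactly this structure (including the upper limit $k+n-1$, not $k$), and the fusion-ideal step you relegate to an ``equivalently'' remark is in fact an indispensable part of the argument, not an optional alternative.
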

\begin{proof}
We need to review the construction of the q-Hamiltonian structure on $M=S^{2n}$. 
Let $\Phi_0\colon \C^n\to \mf{su}(n)^*$ be the moment map for the $\SU(n)$-action on 
$\C^n\cong \R^{2n}$. Its moment polyhedron (i.e. the intersection of the moment map image with 
the positive Weyl chamber) is the ray spanned by $\varpi_1$. Let $U_0\subset \C^n$ be the open ball, 
consisting of all $z\in\C^n$ with $||\Phi_0(z)||<k||\varpi_1||$. Then $\exp$ is regular on 
$B_k^\sharp(\Phi_0(U))$, hence we may `exponentiate' to obtain a q-Hamiltonian 
$\SU(n)$-space $(U_+,\om_+,\Phi_+)$. Let $U_-=U_+$ with 2-form $\om_-=\om_+$ and moment map 
$\Phi_-=c\Phi_+$, where $c$ is the central element obtained by exponentiating the vertex 
$\varpi_1^\vee$ of the alcove. Then $(U_\pm,\om_\pm,\Phi_\pm)$ glue to form 
$(S^{2n},\om,\Phi)$. The moment polytope of $S^{2n}$ is the edge $\{t\varpi_1^\vee|\ t\in [0,1]\}\subset \Alc$
of the alcove. (That is, every element in $\Phi(S^{2n})$ is conjugate to $\exp(t\varpi_1^\vee)$ for a unique 
$t\in [0,1]$.) Consider now the fixed point contributions of $t_\lambda,\ \lambda\in\Lambda^*_k$. 
For the action on $U_+$, the only fixed point is $\exp(0)=e$. Writing $t_\lambda$ as a diagonal matrix 
with entries $z_1,\ldots,z_n$ down the diagonal, the contribution reads
\[ \prod_{j=1}^n (1-z_i^{-1})^{-1}.\]
Insert a parameter $u$ with $|u|<1$, and expand into a power series: 
\[ \prod_{j=1}^n (1-u z_i^{-1})^{-1}=\sum_{m=0}^\infty u^m \sum_{r_1+\ldots+r_n=m} z_1^{-r_1}\cdots z_n^{-r_n}\]
The coefficient of $u^m$ is the character of the representation on the $m$-th symmetric power 
$\on{Sym}^m(\C^n)$. This is an irreducible representation of weight $m\varpi_1$. Hence 
\[ \prod_{j=1}^n (1-u z_i^{-1})^{-1}=\sum_{m=0}^\infty u^m \chi_{m\varpi_1}(t_\lambda).\]
For the action on $U_-\cong U_0$, the fixed point is $c=c\exp(0)=\exp(\varpi_1^\vee)$, and the 
weights are of course the same. However, the $\Spin_c$-structure on $T_cM$ (constructed as in the general 
theory by means of the $T_{k+n}$-equivariant Morita trivialization of the DD bundle $\A|_T$, cf.~ \eqref{eq:B}) 
does not coincide with the $\Spin_c$-structure on $T_0U_0=T_0\C^n$. The Morita trivialization was normalized 
so that it coincides with the $\SU(n)$-equivariant Morita trivialization at $e$, but at $c$ it does 
not coincide with the unique $\SU(n)$-equivariant Morita trivialization at $c$: The two differ by a character 
$t\mapsto t^{(k+n)\varpi_1}$ of $T_{k+n}$. Secondly, the orientation on $T_c U_-$ coming from the orientation on $S^{2n}$ 
is opposite to the orientation coming from $T_0U_0=\C^n$. This changes the $\Z_2$-grading on the spinor module, 
and accounts for a minus sign in the fixed point contribution: 
\[ -  t^{(k+n)\varpi_1} \prod_{j=1}^n (1-z_i^{-1})^{-1}.\]
Once again, we insert a variable $u$ with $|u|<1$, and expand: 
\[ - u^{k+n} t^{(k+n)\varpi_1} \prod_{j=1}^n (1-u z_i^{-1})^{-1}
=- u^{k+n} t^{(k+n)\varpi_1} \sum_{m=0}^\infty u^m \chi_{m\varpi_1}(t_\lambda).\] 
For $\mu\in B_{k+\cox}^\flat(\Lambda)$, we have $t_\lambda^{w\mu}=t_\lambda^\mu$ for all $w\in W$. 

Using the Weyl character formula, we may use this to deduce $t_\lambda^{(k+n)\varpi_1} \chi_{m\varpi_1}(t_\lambda)=\chi_{(m+k+n)\varpi_1}(t_\lambda)$. Hence, adding the two expressions, the coefficients of $u^m$ with $m>k+n$ cancel, and we obtain a polynomial in $u$: 
\[ \sum_{m=0}^{k+n-1} u^m \chi_{m\varpi_1}(t_\lambda).\]
Letting $u\to 1$, this shows 
\[ \ca{Q}(S^{2n})(t_\lambda)= \sum_{m=0}^{k+n-1} \chi_{m\varpi_1}(t_\lambda).\]
By \eqref{eq:theideal}, $\chi_{m\varpi_1}(t_\lambda)=0$ for $m=k+1,\ldots,k+n-1$.  On the other hand, 
the weights $m\varpi_1$ for $m\le k$ are in $\Lambda^*_k$, thus $\chi_{m\varpi_1}(t_\lambda)=\tau_{m\varpi_1}(t_\lambda)$. 
\end{proof}

\begin{appendix}

\section{Equivariant de Rham theory}\label{app:eq}
Let $G$ be a Lie group, with Lie algebra $\g$.  A $G$-action on a
manifold $M$ is a group homomorphism $\A\colon G\to \on{Diff}(M)$ for
which the action map $G\times M\to M,\ (g,m)\mapsto \A(g)(m)$ is
smooth.  A $\g$-action on $M$ is a Lie algebra homomorphism $\A\colon \g\to
\mf{X}(M)$ into the Lie algebra of vector fields, such that the map
$\g\times M\to TM,\ (\xi,m)\mapsto \A(\xi)|_m$ is smooth. Every
$G$-action defines a $\g$-action by $\A(\xi)=\f{d}{d
  t}|_{t=0}\A(\exp(-t\xi))^*$, where we interpret vector fields as
derivations of $C^\infty(M)$. 

We denote by $(\Om_G^\bullet(M),\d_G)$ the equivariant de Rham complex
(or Cartan complex), i.e.~
\[ \Om_G^k(M)=\bigoplus_{2i+j=k}(S^i\g^*\otimes\Om(M))^G,\ \ (\d_G\beta)(\xi)
=\d(\beta(\xi))-\iota(\A(\xi))\beta(\xi).\]
(Here elements of $S\g^*$ are interpreted as polynomials on $\g$.) For
$G$ compact, its cohomology $H(\Om_G(M),\d_G)$ coincides with the
equivariant cohomology group $H_G(M,\R)$.

\section{Relative cohomology}\label{app:rel}
We recall the following construction from homological algebra.
Suppose $R^\bullet,S^\bullet$ are two cochain complexes, and
$f^\bullet\colon S^\bullet\to R^\bullet$ is a cochain map. The \emph{algebraic mapping cone}
is the cochain complex
\[ \cone^k(f):=R^{k-1}\oplus S^k,\ \ d(x,y)=(f(y)-\d x,\d y).\]
Its cohomology is called the \emph{relative cohomology} of the cochain map
$f$. The short exact sequence of cochain complexes 
$0\to R^{\bullet-1}\to\cone^\bullet(f)\to S^\bullet\to 0$ gives rise to a long exact sequence of cohomology groups, 
\[ \cdots H^{k-1}(R)\to H^k(\cone(f))\to H^k(S)\to H^k(R)\to \cdots,\]
here the connecting homomorpism $H^k(S)\to H^k(R)$ is just the map
defined by $f$. Given a commutative diagram 
\[ \begin{CD} S @>>{f}> R\\
@VVV @VVV \\
S' @>>{f'}> R'
\end{CD}\]
of cochain complexes and cochain maps, one obtains a cochain map 
$\cone(f)\to \cone(f')$, and a homomorphism of the corresponding 
long exact sequences in cohomology. In particular, if the vertical
maps 
$S\to S'$ and $R\to R'$ are quasi-isomorphisms, then by the 5-lemma 
$\cone(f)\to \cone(f')$ is a quasi-isomorphism also. 

If $R,S$ are the complexes of singular cochains on (reasonable) spaces $X,Y$, and
$f=\Phi^*$ for some map $\Phi\colon X\to Y$, then the relative cohomology 
group $H(f)$ coincides with the cohomology group 
$H(\Phi):=H(\cone(\Phi))$ of the \emph{topological mapping cone}. 
Here $\cone(\Phi)$ is obtained by gluing the cone $\cone(X)\supset X$ with $Y$, using
$\Phi$ as a gluing map. If $X,Y$ are manifolds and $\Phi$ is a smooth
map, one can compute the group $H(\Phi,\R)$ using differential forms, as the
cohomology of the algebraic mapping cone for the map $\Phi^*\colon
\Om(Y)\to \Om(X)$. A class in $H(\Phi,\R)$ (or the cocycle representing
it) is called \emph{integral} if it lies in the image of the map
$H(\Phi,\Z)\to H(\Phi,\R)$. The integrality of de Rham cocycles is
characterized in terms of the pairing with relative cycles: A cocycle
$(\alpha,\beta) \in \Om^k(\Phi)$ is integral if and only if for every
(smooth) singular chains $\Sigma \in C_{k-1}^{\on{sm}}(X),\ \Upsilon \in
C_k^{\on{sm}}(Y)$ with $\partial\Sigma=0$,
$\partial\Upsilon=\Phi(\Sigma)$, one has
\[\int_\Sigma\alpha-\int_{\Upsilon}\beta\in \Z.\]
The definitions generalize to $G$-spaces and $G$-maps in an obvious way. 
If $\Phi$ is a $G$-map, we define $H_G(\Phi):=H(\Phi_G)$, where
$\Phi_G\colon X_G\to Y_G$ is the map of the Borel constructions.
(Note that $\cone \Phi_G\not=(\cone \Phi)_G$, in general. Thus
$H_G(\Phi)$ is not to be confused with the equivariant cohomology of
$\cone(\Phi)$.)
If $\Phi$ is a smooth $G$-map between $G$-manifolds, then $H_G(\Phi,\R)$  
is computed using the algebraic mapping cone 
\[\Om_G^k(\Phi)=\Om^{k-1}_G(X)\oplus \Om^k_G(Y).\] 

\section{$K$-homology}\label{app:khom}
In this Section, we give a brief summary of some aspects of $K$-homology, following Kasparov's 
approach.  Some aspects of bivariant K-theory will be needed as well. For details, we refer to Kasparov's papers 
\cite{ka:op,ka:eq} and the monograph \cite{hig:ana}. 
\subsection{Kasparov modules}
Let $G$ be a compact Lie group, and $\mathsf{A}$ a (separable) $\Z_2$-graded $G$-$C^*$-algebra. 
Elements of the $K$-homology groups $K^0_G(\mathsf{A})$ are represented by 
\emph{Kasparov modules}
$(\H,\pi,F)$, where $\H$ is a $\Z_2$-graded $G$-Hilbert space with a $G$-equivariant 
even $*$-homomorphism $\pi\colon \mathsf{A}\to \mathbb{B}(\H)$, and 
$F\in \mathbb{B}(\H)$ 
is a $G$-invariant odd bounded operator, such that for all $a\in \mathsf{A}$ 
\[ [\pi(a),F],\ \ (F^2-I)\pi(a),\ \ (F^*-F)\pi(a)\]
are in the ideal $\mathbb{K}(\H)$ of compact operators. Such triples form a semi-group under direct sum, and 
a semi-group $K^0_G(\mathsf{A})$ is defined as its quotient by an equivalence relation of `homotopy'. In fact, the semigroup 
$K^0_G(\mathsf{A})$ is a group. $K$-homology is contravariant in $\mathsf{A}$, and 
the definition $K^1_G(\mathsf{A})=K^0_G(\mathsf{A}\otimes\Cl(\R))$ makes 
$K^\bullet_G(\mathsf{A})$ into a $\Z_2$-graded homology theory on $G$-$C^*$-algebras. 

If $(\H,\pi,F)$ is a Kasparov module over $\mathsf{A}$, then $(\ol{\H},\pi^*,F^*)$ is 
a Kasparov module over $\mathsf{A}^{\on{op}}$. Here $\ol{\H}$ is viewed as the dual space to $\H$, and 
$\pi^*(a)=\pi(a)^*$. This defines a group homomorphism $*\colon K^\bullet_G(\mathsf{A})\to  
K^\bullet_G(\mathsf{A}^{\on{op}})$ with $*^2=1$.  

We are mainly interested in the case $\mathsf{A}=C_0(X)$ (the continuous functions on a $G$-space $X$, vanishing at infinity), or more generally 
$\mathsf{A}=\Gamma_0(X,\A(X))$ (the continuous sections of a $G$-DD bundle over $X$, vanishing at infinity).
\footnote{Throughout, we assume that $X$ is a `reasonable' space, e.g. a 
countable $G$-CW complex.} In these cases, we write 
$K_i^G(X):=K^i_G(C_0(X))$ resp. $K_i^G(X,\A)=K^i_G(\Gamma_0(X,\A))$. These groups behave similar to Borel-Moore homology (homology with locally finite supports).

One has $K_1^G(\C)=0$, while $K_0^G(\C)=R(G)$, the representation ring of $G$. The identification
takes $(\H,\pi,F)$ to the equivariant Fredholm index of $F$. The ring structure on $R(G)$  
is given by the `cross product' on K-homology (cf.~ \cite[page 265]{hig:ana}), 
and the involution $*$ on $R(G)$ is the involution $*$ on $K$-homology.  More generally, 
using the cross product, the equivariant $K$-homology groups $K_\bullet(\mathsf{A})$ are modules over $R(G)$.

\subsection{Morita morphisms}
The K-homology groups $K^\bullet_G(\mathsf{A})$ are functorial 
not only with respect to $C^*$-algebra homomorphisms, but more generally
with respect to \emph{Morita morphisms}. Suppose $\mathsf{E}$ is a 
$\Z_2$-graded $G$-equivariant Morita $\mathsf{A}_2-\mathsf{A}_1$ bimodule in the sense of Rieffel, where  $\mathsf{A}_1,\ \mathsf{A}_2$ are $\Z_2$-graded $G$-$C^*$-algebras. 
Thus $\mathsf{E}$ is a $G$-equivariant 
Hilbert module over $\mathsf{A}_1$ with an even $G$-equivariant  
$*$-homomorphism $\mathsf{A}_2\to \mathbb{B}(\mathsf{E})$. (See e.g. 
\cite{lan:hi} for an introduction to Hilbert modules.) Together with 
$F:=0\in \mathbb{B}(\mathsf{E})$ one obtains an element $[\mathsf{E}]$ 
of the bivariant $K$-group $KK_G(\mathsf{A}_2,\mathsf{A}_1)$. (The latter is defined similar to 
the K-homology groups, but with Hilbert modules in place of Hilbert spaces.)
Kasparov product with this element gives a group homomorphism 
$K^\bullet_G(\mathsf{A}_1)\to K^\bullet_G(\mathsf{A}_2)$. 

Morita morphisms  $(\Phi,\E)\colon (X_1,\A_1)\da (X_2,\A_2)$ of 
$G$-DD bundles, as defined in Section \ref{subsubsec:mormor},  give rise to Morita morphisms of the $C^*$-algebras 
of sections: Let $\mathsf{E}=\Gamma_0(X_1,\E),\ \mathsf{A}_i=\Gamma_0(X_i,\A_i),\ i=1,2$ be the continuous sections vanishing at infinity. The $\Phi^*\A_2-\A_1$-bimodule structure of $\E$ defines a $\mathsf{A}_2-\mathsf{A}_1$-bimodule structure of $\mathsf{E}$. There is a fiberwise 
`$\A_1$-valued inner product'
$\E\times\E\da \A_1$, given in local trivializations by the map
$\K(\H_1,\H_2)\times \K(\H_1,\H_2)\to \K(\H_1),\ \ (a,b)\mapsto a^* b$. 
Passing to sections, one obtains a $\mathsf{A}_1$-valued inner product on $\mathsf{E}$, giving $\mathsf{E}$ the structure of a Hilbert module over $\mathsf{A}_1$, and indeed of a Morita  $\mathsf{A}_2-\mathsf{A}_1$ bimodule. As a consequence, any Morita morphism $(\Phi,\E)$ as above induces a group homomorphism 
$K_\bullet^G(X_1,\A_1)\to K_\bullet^G(X_2,\A_2)$. 

\subsection{The K-homology fundamental class}
An important example of a $K$-homology class is the \emph{fundamental class}, constructed by Kasparov in 
\cite[$\mathsection$ 4]{ka:eq}, where it is called the \emph{Dirac element}.  
Let $M$ be a Riemannian $G$-manifold, and $\Cl(TM)$ its bundle of Clifford algebras. Let $\d$ be the de Rham differential on $\Gamma^\infty(M,\wedge T^*M)$, $\d^*$ its dual, and $D=\d\oplus \d^*$. Let $\H=\Gamma_{L^2}(M,\wedge T^*M)$ (the square integrable sections), and $F=D/(1+D^2)^{1/2}\in \mathbb{B}(\H)$. The operator $D$ is the Dirac operator for the Clifford 
module $\wedge T^*M\cong \Cl(TM)$, defined using the $\Cl(TM)$-action by multiplication from the left. This 
bundle carries a second $\Cl(TM)$-module structure defined by multiplication from the right. It defines an even  homomorphism $\pi\colon \Gamma_0(M,\Cl(TM))\to \mathbb{B}(\H)$. The triple 
$(\Gamma_{L^2}(M,\wedge T^*M),\pi,F)$
defines a class $[M]\in K^0_G(\Gamma_0(\Cl(TM))$. 
The $\Z_2$-graded $G$-DD bundle $\wt{\Cl}(TM):=\Cl(TM\oplus \R^n)$ (where $n=\dim M$) plays the role of the orientation bundle in $K$-theory, in the sense that a Morita trivialization of this bundle amounts to a $K$-orientation, i.e.~ $\Spin_c$-structure. Thus $[M]\in K_n(M,\wt{\Cl}(TM))$ is the 
analogue of the fundamental class in $H_n^{B.M.}(M,o_M)$, the Borel-Moore homology with coefficients in the orientation bundle $o_M=\wedge^n TM$. 

Suppose now that $\dim M$ is even, and that $M$ carries a $\Spin_c$-structure $\ca{S}$. 
The $\Spin_c$-Dirac operator $\dirac$ for $\ca{S}$ defines a K-homology class $[\dirac]\in K_0(M)$, given by the 
triple $(\H,\pi,F)$ where $\H=\Gamma_{L^2}(\ca{S})$ with the natural action $\pi$ of 
$C(M)$, and $F=\dirac/(1+\dirac^2)^{1/2}$. On the other hand, the spinor module $\ca{S}$ 
defines a $G$-equivariant Morita isomorphism 
\[ (\on{id},{\ca{S}})\colon
(M,\Cl(TM))\da (M,\C)\] (here $\ca{S}$ is regarded as a right module for $\Cl(TM)$), hence 
an isomorphism 
\begin{equation}\label{eq:morisom1} K_0^G(M,\Cl(TM))\to K_0^G(M).\end{equation}
\begin{proposition}\label{prop:roe}
Suppose $M$ is an even-dimensional 
Riemannian $G$-manifold with equivariant $\Spin_c$-structure. 
The isomorphism \eqref{eq:morisom1} takes the fundamental class $[M]$ to the class $[\dirac]$ of the $\Spin_c$-Dirac operator. More generally, if $E$ is a $G$-equivariant Hermitian vector bundle over $M$, 
it takes the class $[E]\cap [M]$ to the class $[\dirac^E]$ of the $\Spin_c$-Dirac operator with coefficients in $E$.
\end{proposition}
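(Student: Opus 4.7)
The plan is to compute the pushforward along the Morita morphism directly as an unbounded Kasparov product. Recall that \eqref{eq:morisom1} is induced by Kasparov product with the class $[\Gamma_0(M,\ca{S})]\in KK_G(\Gamma_0(M),\Gamma_0(M,\Cl(TM)))$ represented by the Morita $C_0(M)$--$\Gamma_0(M,\Cl(TM))$ bimodule of sections of $\ca{S}$, equipped with the zero operator. The class $[M]$ is represented by $(\Gamma_{L^2}(M,\wedge T^*M),\pi_r,F)$ with $\pi_r$ the action of $\Cl(TM)$ by right Clifford multiplication on $\wedge T^*M\cong \Cl(TM)$ and $F=D(1+D^2)^{-1/2}$ for $D=d+d^*$.

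First I would use the quantization (symbol) isomorphism $\wedge T^*M\cong \Cl(TM)$ of $\Cl(TM)$--bimodules, under which left and right Clifford multiplication correspond to the left and right regular representations on $\Cl(TM)$. This identifies the algebraic tensor product $\wedge T^*M\otimes_{\Cl(TM)}\ca{S}$ with $\ca{S}$ equipped with its natural left Clifford action, and $L^2$-completion yields a unitary isomorphism $\Gamma_{L^2}(M,\wedge T^*M)\otimes_{\Gamma_0(M,\Cl(TM))}\Gamma_0(M,\ca{S})\cong \Gamma_{L^2}(M,\ca{S})$ of Hilbert $C_0(M)$-modules.

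The heart of the argument is to show that the Kasparov product of $D$ with the zero operator on the Morita bimodule is operator-homotopic, after bounded transform, to $\dirac(1+\dirac^2)^{-1/2}$ acting on $\Gamma_{L^2}(M,\ca{S})$. Choosing a Clifford connection $\nabla^{\ca{S}}$ compatible with the Levi-Civita connection, the $\Spin_c$-Dirac operator $\dirac$ is a ``$D$-connection'' in the sense of Connes--Skandalis, and Kasparov's technical theorem then yields $[M]\otimes[\Gamma_0(M,\ca{S})]=[\dirac]$. This is essentially Kasparov's Poincar\'{e} duality statement for $\Spin_c$-manifolds, and the identification can be quoted from \cite{ka:eq} or from the account in \cite{hig:ana}.

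For the twisted case, pick a Hermitian connection $\nabla^E$. The class $[E]\cap [M]$ is represented by the coupled operator $d^E+(d^E)^*$ on $\Gamma_{L^2}(M,\wedge T^*M\otimes E)$, with $\pi_r$ extended to act trivially on the $E$-factor. The same tensor-product identification now gives $\wedge T^*M\otimes E\otimes_{\Cl(TM)}\ca{S}\cong \ca{S}\otimes E$, and the connection argument (applied to the Clifford connection $\nabla^{\ca{S}}\otimes 1+1\otimes\nabla^E$) identifies the Kasparov product with $\dirac^E$. The main obstacle is the verification that $\dirac$ (respectively $\dirac^E$) is a $D$-connection, which amounts to a local comparison of the de Rham operator with the Dirac operator under the symbol map modulo zeroth-order curvature terms; this is standard but technical, and it is the point at which the unbounded Kasparov machinery is essential.
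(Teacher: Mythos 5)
Your proposal is correct in outline but takes the \emph{forward} route, computing the Kasparov product $[\Gamma_0(\ca{S})]\otimes_{\Gamma_0(\Cl(TM))}[M]$ directly, which forces you to verify the Connes--Skandalis connection condition over the noncommutative algebra $\Gamma_0(\Cl(TM))$. The paper takes the reverse direction: it considers the inverse Morita morphism $(\on{id},\ol{\ca{S}})\colon (M,\C)\da (M,\Cl(TM))$, so that the induced map $K_0^G(M)\to K_0^G(M,\Cl(TM))$ is given by Kasparov product over the \emph{commutative} algebra $C_0(M)$ with the finitely generated projective module $\Gamma_0(\ol{\ca{S}})$. That product is nothing but the classical operation of coupling a Dirac-type operator to a Hermitian vector bundle, which needs no connection-condition bookkeeping: one just twists $\dirac$ by $\ol{\ca{S}}$. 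The whole content then collapses to the algebraic observation $\ca{S}\otimes\ol{\ca{S}}\cong\Cl(TM)\cong\wedge T^*M$ as a $\Cl(TM)\otimes\Cl(TM)$-bimodule, under which the twisted Dirac operator $\dirac_{\ol{\ca{S}}}$ is the de Rham operator $d+d^*$, i.e.\ exactly Kasparov's cycle for $[M]$. So the paper's version buys you a shorter path by shifting the Kasparov product to the side where it is a standard vector-bundle twist, at the cost of having to work with the inverse Morita morphism and the conjugate spinor bundle. Your version is a legitimate alternative, but the point you flag as ``the main obstacle'' (checking that $\dirac$, resp.\ $\dirac^E$, is a suitable connection for the de Rham operator) is precisely the technical work that the paper's reversal of direction is designed to avoid; you should either carry that verification through or cite it explicitly. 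Also be careful with the left/right $\Cl(TM)$-module conventions when identifying $\wedge T^*M\otimes_{\Cl(TM)}\ca{S}$: in the paper's conventions the Kasparov module for $[M]$ uses \emph{right} Clifford multiplication, and $\ca{S}$ is also a right $\Cl(TM)$-module in the Morita morphism, so the balancing of the tensor product needs the anti-automorphism of $\Cl(TM)$ (as the paper notes in Section 4.1) to be stated correctly.
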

(If $\dim M$ is odd, one has a similar statement, but with a $\Spin_c$-Dirac operator defining an odd 
$K$-homology class.) The following proof was explained to me by John Roe. 
\begin{proof}
We outline the argument for $E=\C$, the general case is a straightforward extension. 
For convenience, we work with the Morita morphism  $(\on{id},\ol{\ca{S}})\colon
(M,\C)\da (M,\Cl(TM))$ `inverse' to $(\id,\ol{\ca{S}})$. The isomorphism
$K_0^G(M)\to K_0^G(M,\Cl(TM))$ is given by Kasparov product with the class in bivariant $K$-theory $KK_G(\Gamma_0(\Cl(TM)),C_0(M))$, defined by the 
triple $(\Gamma(\ol{\ca{S}}),\pi,0)$ where $\Gamma(\ol{\ca{S}})$ is viewed as a Hilbert $C_0(M)$-module with an action 
$\pi$ of $\Gamma(\Cl(TM))$. This Kasparov product amounts to `coupling' to 
the vector bundle $\ol{\ca{S}}$. That is, it takes $[\dirac]$ to $[\dirac_{\ol{\ca{S}}}]$, the $\Spin_c$-Dirac operator 
with coefficients in the $\Cl(TM)$-module $\ol{\ca{S}}$. But $\ca{S}\otimes \ol{\ca{S}}=\Cl(TM)\cong \wedge T^*M$ as a $\Cl(TM)\otimes \Cl(TM)$-module. That is, $\dirac_{\ol{\ca{S}}}$ is the de Rham Dirac operator. 
\end{proof}

\subsection{Thom isomorphisms}
Kasparov's version of the \emph{Thom isomorphism theorem} in K-theory asserts that for any $G$-equivariant Euclidean vector bundle $\pi\colon V\to X$, the $C^*$-algebras $C_0(V)$ and $\Gamma_0(\Cl(V))$ are $KK_G$-equivalent. That is, there 
are canonically given elements $\alpha\in KK_G(C_0(V),\Gamma_0(\Cl(V)))$ and $\beta\in KK_G(C_0(V),\Gamma_0(\Cl(V)))$
that are inverses under the Kasparov product. More generally, we have: 
\begin{lemma}[Thom isomorphism in twisted $K$-theory]
Suppose $\A\to X$ is a $G$-DD bundle. Then the $G-C^*$-algebras 
$\Gamma_0(\A\otimes\Cl(V))$ and $\Gamma_0(\pi^*\A)$ are $KK_G$-equivalent. 
\end{lemma}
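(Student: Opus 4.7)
The plan is to bootstrap from Kasparov's untwisted Thom isomorphism by tensoring both sides with $\Gamma_0(\A)$ in the category of $C_0(X)$-algebras.

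First I would recall the untwisted version: Kasparov constructs $\alpha\in KK_G(C_0(V),\Gamma_0(\Cl(V)))$ from a family of Dirac-type operators along the fibers (acting on sections of $\pi^*\wedge V^*$ with Clifford multiplication by the tangent variable), together with an inverse $\beta\in KK_G(\Gamma_0(\Cl(V)),C_0(V))$ given by Bott-type classes. The crucial structural point is that both Kasparov modules are built fiberwise over $X$: the underlying Hilbert modules are $C_0(X)$-linear, the representations are $C_0(X)$-equivariant, and the operators commute with the $C_0(X)$-action modulo lower order terms. Equivalently, $\alpha$ and $\beta$ lift to mutually inverse classes in Kasparov's representable bivariant group $\mathcal{R}KK^G(X;-,-)$ of $C_0(X)$-algebras.

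Second, I would identify the two algebras in question as interior tensor products over $C_0(X)$: there are canonical $G$-equivariant isomorphisms
\[ \Gamma_0(\pi^*\A)\ \cong\ C_0(V)\otimes_{C_0(X)}\Gamma_0(\A),\qquad
 \Gamma_0(\A\otimes\Cl(V))\ \cong\ \Gamma_0(\A)\otimes_{C_0(X)}\Gamma_0(\Cl(V)).\]
These identifications are local on $X$: in a local trivialization $\A|_U\cong C_0(U,\K(\H))$, the first reads $C_0(\pi^{-1}(U),\K(\H))=C_0(\pi^{-1}(U))\otimes\K(\H)$, and the second is the analogous statement for $\Cl(V)|_U$; they patch together using the $\on{PU}(\H)$-transition functions of $\A$.

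Third, I would invoke functoriality of the $C_0(X)$-balanced Kasparov product: tensoring over $C_0(X)$ with the identity class of $\Gamma_0(\A)$ defines a ring homomorphism
\[ -\otimes_{C_0(X)}\Gamma_0(\A)\colon \mathcal{R}KK^G(X;\,\cdot\,,\,\cdot\,)\lra KK_G(\,\cdot\otimes_{C_0(X)}\Gamma_0(\A),\ \cdot\otimes_{C_0(X)}\Gamma_0(\A)).\]
Applied to $\alpha$ and $\beta$, this produces classes
\[ \wt\alpha\in KK_G\bigl(\Gamma_0(\pi^*\A),\ \Gamma_0(\A\otimes\Cl(V))\bigr),\qquad
 \wt\beta\in KK_G\bigl(\Gamma_0(\A\otimes\Cl(V)),\ \Gamma_0(\pi^*\A)\bigr),\]
and the relations $\beta\otimes_{\Gamma_0(\Cl(V))}\alpha=1$ and $\alpha\otimes_{C_0(V)}\beta=1$ survive the tensoring to give $\wt\beta\otimes\wt\alpha=1$, $\wt\alpha\otimes\wt\beta=1$, establishing the $KK_G$-equivalence.

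The main obstacle is the third step, namely the well-definedness of the $C_0(X)$-balanced Kasparov product in the presence of the noncommutative coefficient algebra $\Gamma_0(\A)$. Concretely one must verify that the Hilbert-module underlying $\alpha$, when completed over $\Gamma_0(\A)\otimes_{C_0(X)}\Gamma_0(\Cl(V))$, remains a Kasparov module, and that the operator $F$ continues to satisfy the compact-commutator condition after the tensor. This is standard but delicate; the cleanest route is to use that $\Gamma_0(\A)$ is a continuous-trace algebra that is locally Morita equivalent to $C_0(X)$, reducing to the classical Thom isomorphism on trivializing patches and gluing via the $\on{PU}(\H)$-cocycle, using nuclearity of $\K(\H)$ to ensure there are no ambiguities in the tensor product norms.
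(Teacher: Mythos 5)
Your overall strategy is the same as the paper's: lift Kasparov's Thom equivalence to the group $\ca{R}KK_G^X(\cdot,\cdot)$ of $C_0(X)$-algebras, identify the two twisted algebras as $C_0(X)$-balanced tensor products with $\Gamma_0(\A)$, and then apply the functor $-\otimes_{C_0(X)}\Gamma_0(\A)$ from $\ca{R}KK_G^X$ to $KK_G$. Where you differ is in how the $\ca{R}KK_G^X$-equivalence between $C_0(V)$ and $\Gamma_0(\Cl(V))$ is established. You propose to verify directly that Kasparov's Thom modules $\alpha,\beta$ are built fiberwise over $X$ and hence define $\ca{R}KK^G(X;\cdot,\cdot)$-classes; this is true, but it requires inspecting the construction and, as you note yourself, leaves some analytic loose ends about the balanced Kasparov product that you then handle with a somewhat heavy local-Morita-equivalence-plus-nuclearity argument. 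The paper sidesteps all of this: it starts from Kasparov's $KK_{\O(n)}$-equivalence between $C_0(\R^n)$ and $\Cl(\R^n)$ over a point, pulls back to the $\O(n)$-frame bundle $P$ of $V$ to get an $\ca{R}KK_{G\times\O(n)}^P$-equivalence, and then applies Kasparov's Descent Theorem to divide out by $\O(n)$, obtaining the $\ca{R}KK_G^X$-equivalence without ever reopening the Thom construction. The final tensoring step is then just the built-in functoriality of $\ca{R}KK$ under $\otimes_{C_0(X)}$, with no extra continuous-trace or nuclearity argument needed. Both routes work; the paper's is shorter and cites rather than re-derives the $C_0(X)$-linearity, while yours is more self-contained but has to fill in more analytic detail in the last step.
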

I am grateful to Nigel Higson for help with the following argument.
\begin{proof} 
In \cite{ka:eq} Kasparov considered a refinement 
$\ca{R}KK_G^X(\mathsf{A},\mathsf{B})$ of the bivariant $K$-groups, with arguments a $G$-equivariant space $X$, and $G-C_0(X)$ $C^*$-algebras $\mathsf{A},\mathsf{B}$. Elements of this group are represented by triples 
$(\H,\pi,\,F)$ as in the $KK_G$-theory, with a 
requirement that the $C_0(X)$-actions on $\mathsf{A},\mathsf{B}$ are compatible with the 
bimodule action on the Hilbert module 
$\H$. In \cite[Section 5]{ka:op}, Kasparov shows that the 
$C^*$-algebras $C_0(\R^n)$ and $\Cl(\R^n)$ are 
$KK_{\on{O}(n)}$-equivalent. Letting $P$ be the $\on{O}(n)$-frame bundle 
of $V$, it follows that $C_0(P\times \R^n)$ and $C_0(P,\Cl(\R^n))$ are 
$\ca{R}KK_{G\times \on{O}(n)}^P$-equivalent. 
Using the Descent Theorem \cite[Theorem 3.4]{ka:eq} one may take a quotient by 
$\on{O}(n)$, and obtains that 
$C_0(V)$ and $\Gamma_0(\Cl(V))$ are $\ca{R}KK_G^X$-equivalent. 
Tensoring over $C_0(X)$ with $\Gamma_0(\A)$, it follows that 
$C_0(V)\otimes_{C_0(X)}\Gamma_0(\A)=\Gamma_0(\pi^*\A)$ and $\Gamma_0(\Cl(V))\otimes_{C_0(X)}\Gamma_0(\A)=
\Gamma_0(\A\otimes\Cl(V))$ are $\ca{R}KK_G^X$-equivalent, 
hence also $KK_G$-equivalent. 
\end{proof}

\begin{remark}
The Thom isomorphism in $K$-theory, for vector bundles that are not necessarily $K$-oriented, 
was one of the original motivations for Donovan-Karoubi's \cite{don:gr} definition of 
$K$-theory with local coefficients. Carey-Wang \cite{car:th} have proved a Thom isomorphism in twisted K-theory using 
`bundle gerbe modules'. \end{remark}

The Thom isomorphism may be used to define wrong-way maps in twisted $K$-homology. 
For example, suppose that $\iota\colon N\subset M$ is a $G$-invariant embedded submanifold 
of even codimension, and $\A\to M$ is a DD bundle. Let $\nu\to N$ be the normal bundle. 
Then we obtain a map 
\[ \iota^!\colon K_\bullet^G(M,\A)\to K_\bullet^G(N,\A|_N\otimes \Cl(\nu))\] 
by composing the restriction $K_\bullet^G(M,\A)\to K_\bullet^G(U,\A|_U)$ to a tubular neighborhood 
$U$ of $N$ with a Thom isomorphism for $\nu\cong U$. As in \cite{ros:co}, one defines the twisted equivariant $K$-theory with compact supports $K^\bullet_{G,cp}(X,\A)$ 
as the equivariant $K$-theory of $\Gamma_0(X,\A)$. If $M$ is an even-dimensional manifold, one has the following Poincar\'{e} duality statement: 
\begin{proposition} (\cite{tu:twi1}, see also \cite{bro:dbr,ech:kk}.)
Suppose $M$ is a compact even-dimensional manifold\footnote{We assume $n=\dim M$ is even so that $\Cl(TM)$ is a $G$-DD bundle. 
The odd-dimensional case may be handled by working with $\wt{\Cl}(TM)=\Cl(TM\oplus \R^n)$.}, and $\A\to M$ is a $G$-DD bundle.
Then there are canonical Poincar\'{e} duality isomorphisms
\[ K_\bullet^G(M,\A)\cong K^\bullet_G(M,\Cl(TM)\otimes \A^{\on{op}}),\ \ K^\bullet_G(M,\A)\cong K_\bullet^G(M,\Cl(TM)\otimes \A^{\on{op}}).\]
\end{proposition}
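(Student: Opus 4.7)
\bigskip

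\noindent
\textbf{Proof proposal.} The plan is to follow the standard Kasparov duality formalism, adapted to the twisted setting. The key preliminary observation is that for any $G$-DD bundle $\A \to M$, the tensor product $\A \otimes \A^{\op}$ is \emph{canonically} $G$-equivariantly Morita trivial. Indeed, in a local model $\A \otimes \A^{\op}$ has fibers $\K(\H) \otimes \K(\ol{\H}) \cong \K(\H \otimes \ol{\H})$, with the Hilbert-Schmidt module $\H \otimes \ol{\H}$ furnishing the trivialization. Consequently, $\Gamma_0(M,\A) \otimes_{C_0(M)} \Gamma_0(M, \Cl(TM) \otimes \A^{\op})$ is canonically $KK_G$-equivalent to $\Gamma_0(M, \Cl(TM))$.

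Given this, the duality will be implemented by constructing two classes that are mutual inverses under Kasparov product. The first is a \emph{Dirac element}
\[ \mu \;\in\; KK_G\bigl(\Gamma_0(M,\A) \otimes_{C_0(M)} \Gamma_0(M, \Cl(TM) \otimes \A^{\op}),\; \C\bigr), \]
which, via the canonical Morita trivialization above, is nothing but the K-homology fundamental class $[M] \in K_0^G(M, \Cl(TM))$ constructed in Appendix \ref{app:khom}. The second is a \emph{dual (Bott) element}
\[ \Delta \;\in\; KK_G\bigl(\C,\; \Gamma_0(M,\A) \otimes_{C_0(M)} \Gamma_0(M, \Cl(TM) \otimes \A^{\op})\bigr), \]
which I would construct by choosing a $G$-equivariant embedding $\iota \colon M \hookrightarrow V$ into a finite-dimensional orthogonal representation $V$ of $G$ (possible since $G$ is compact), letting $\nu$ denote the normal bundle of $\iota$, and invoking the Thom isomorphism for the bundle $\nu$ together with Bott periodicity for $V$. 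Concretely, $\Delta$ is a Kasparov product of the Bott class of $V$ with the inverse Thom class of $\nu$, twisted by the Morita triviality above; this is a twisted variant of Kasparov's construction in \cite{ka:op}.

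Once $\mu$ and $\Delta$ are in hand, cap product with $\mu$ defines a map $K^\bullet_G(M, \Cl(TM) \otimes \A^{\op}) \to K_\bullet^G(M, \A)$ by Kasparov product, while cap product with $\Delta$ defines a map in the reverse direction. The remaining task is the verification that $\Delta \otimes_{\Gamma_0(M,\A) \otimes_{C_0(M)} \Gamma_0(M, \Cl(TM) \otimes \A^{\op})} \mu = 1_\C$ and the analogous relation on the other side. I expect this to be the main technical obstacle, as it requires a direct manipulation of Kasparov products. The standard strategy, which I would follow, is first to check it in the untwisted case $\A = \C$ -- where the assertion reduces to the duality $K_\bullet^G(M) \cong K^\bullet_G(M, \Cl(TM))$ already implicit in Kasparov's work \cite{ka:eq} -- and then to deduce the twisted statement by tensoring over $C_0(M)$ with $\Gamma_0(M, \A)$ and using the canonical Morita triviality of $\A \otimes \A^{\op}$, in essentially the same fashion as the proof of the twisted Thom isomorphism given earlier.

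Finally, the second duality isomorphism $K^\bullet_G(M, \A) \cong K_\bullet^G(M, \Cl(TM) \otimes \A^{\op})$ follows from the first by interchanging the roles of $\A$ and $\Cl(TM) \otimes \A^{\op}$ and using $(\Cl(TM) \otimes \A^{\op})^{\op} \otimes \Cl(TM) \sim_{\text{Morita}} \A$, which again rests on the canonical Morita triviality of $\Cl(TM) \otimes \Cl(TM)^{\op}$ and of $\A \otimes \A^{\op}$.
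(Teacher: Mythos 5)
The paper does not prove this proposition; it is cited from Tu \cite{tu:twi1} and Echterhoff--Emerson--Kim \cite{ech:kk}, so there is no in-paper argument to compare your proposal against. Your outline follows the standard Kasparov scheme that underlies those references: a Dirac element $\mu$ built from the fundamental class, a dual-Dirac element $\Delta$ built from an equivariant embedding into a finite-dimensional representation together with Thom and Bott classes, and the verifications $\Delta\otimes\mu=1$, $\mu\otimes\Delta=1$. The preliminary observation that $\A\otimes\A^{\op}$ is canonically Morita trivial (via $\H\otimes\ol{\H}$) is correct and is indeed the mechanism that makes the twist drop out along the diagonal; and your deduction of the second isomorphism from the first by interchanging $\A$ with $\Cl(TM)\otimes\A^{\op}$ is sound.

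The step you should not underestimate is the reduction to the untwisted case. In the twisted Thom isomorphism proved in the Appendix, the untwisted input is an $\ca{R}KK_G^X$-\emph{equivalence} of two $C_0(X)$-algebras, and tensoring over $C_0(X)$ with $\Gamma_0(\A)$ transports it verbatim. Poincar\'e duality is not an $\ca{R}KK_G^M$-equivalence: it is a $KK_G$-duality, and $\mu$ and $\Delta$ naturally live over $M\times M$, not over $M$. To carry the duality across the twist by the same tensoring device one must first establish that $\mu$ and $\Delta$ can be realised on Hilbert modules carrying $C_0(M\times M)$-structures concentrated near the diagonal, where $\pr_1^*\A\otimes\pr_2^*\A^{\op}$ is canonically trivialised, and then argue that the Kasparov product computations are local to the diagonal. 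It is this localised refinement of Kasparov's duality --- not the bare $KK_G$-statement --- that admits the ``tensor over $C_0(M)$'' argument, and arranging it is the real content of \cite{ech:kk}. Your sketch implicitly assumes it; this, together with the product verifications you already flag as the main obstacle, is what separates your outline from a complete proof.
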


\section{The level $k$ fusion ring}\label{app:fus}
We will view the representation ring $R(G)$ of a compact Lie group $G$ as the 
subring of
$C^\infty(G)$, spanned by the characters of finite-dimensional
representations of $G$. Suppose $G$ is simply connected, with a decomposition $G=G_1\times\cdots\times 
G_N$ into simple factors, and let $k\in \Z^N$, with $k_i\ge 0$. 
The level $k$ fusion ring (Verlinde algebra) $R_k(G)$ is the quotient 
of $R(G)$ by an ideal of characters vanishing on a certain finite collection of conjugacy classes. 
To describe these conjugacy classes, fix a maximal torus $T$
and a closed positive Weyl chamber $\t_+\subset \t$. Let $\Alc$ be the 
unique closed alcove contained in $\t_+$ and containing $0$.  The alcove parametrizes conjugacy 
classes in $G$, in the sense that any element of $G$ is conjugate to $\exp(\xi)$ for a unique element 
$\xi\in\Alc$. The regular elements of $G$ correspond to points in the interior of $\Alc$. 

Let $\Lambda\subset \t$ be the integral lattice (kernel of the exponential map
$\t\to T$), and let $\Lambda^*\subset \t^*$ be its dual, the (real) weight lattice. 
We have the usual identifications $\Lambda=\on{Hom}(\U(1),T)$ and 
$\Lambda^*=\on{Hom}(T,\U(1))$. 
Put $\Alc^*_k=B_k^\flat(\Alc)\subset \t^*$, and let 
\[ \Lambda^*_+=\Lambda^*\cap \t^*_+,\ \ \ \Lambda^*_k=\Lambda^*\cap \Alc^*_k\]
be the set of dominant weights and \emph{level $k$ weights}, respectively. A dominant weight 
resp.~ level $k$ weight is \emph{regular} if it lies in the interior 
of $\t^*_+$, resp.~ of $\Alc^*_k$. Letting $\rho\in \Lambda^*$ be the half-sum of positive roots, 
and $\cox=(\cox_1,\ldots,\cox_N)$, where $\cox_i$ is the dual Coxeter number of $G_i$, 
one has
\[ \Lambda^*_{+,\reg}=\Lambda^*_++\rho,\ \ \ \Lambda^*_{k+\cox,\reg}=\Lambda^*_k+\rho.\]
The map $B_{k+\cox}^\sharp=(B_{k+\cox}^\flat)^{-1}$ takes $\Lambda^*_{k+\cox,\reg}$ to the interior of 
$\Alc$; thus 
\begin{equation}\label{eq:tlambda}
 t_\lambda=\exp(B_{k+\cox}^\sharp(\lambda+\rho)),\ \ \lambda\in\Lambda^*_k\end{equation}
are regular elements of $G$. 
\begin{definition}\label{def:fus}
The  \emph{level $k$ fusion ideal} $I_k(G)\subset R(G)$ is the ideal of characters vanishing 
on the conjugacy classes of the elements \eqref{eq:tlambda}. The \emph{level $k$ fusion ring} is the quotient ring 
\[ R_k(G)=R(G)/I_k(G).\]
\end{definition}
Clearly, $R_k(G)$ is the direct product of the fusion rings $R_{k_i}(G_i)$ for the simple factors. The involution of $R(G)$ given by complex conjugation $\lambda\mapsto \lambda^*$ of characters 
preserves $I_k(G)$, hence it descends to an involution $\tau\mapsto \tau^*$ of $R_k(G)$. 
The evaluation of characters at $t_\lambda$ descends to the fusion ring: 
\[ \on{ev}_{t_\lambda}\colon R_k(G)\to \C,\ \tau\mapsto \tau(t_\lambda).\]
\begin{remark}
      $R_k(G)$ may also be interpreted as the fusion ring of level $k$
      projective representations of the loop group $LG$. However, we
      will not need this viewpoint in what follows.
\end{remark}

\begin{remark}
For $l=(l_1,\ldots,l_N)$ with $l_i\in \Z$, the 
bilinear form $B_l$ takes on integer values on $\Lambda$. Hence the map 
$B_l^\flat\colon \t\to \t^*$ takes $\Lambda$ into $\Lambda^*$. Assuming that all $l_i>0$, 
so that $B_l^\sharp=(B_l^\flat)^{-1}$ is defined, we may thus define a finite subgroup 
\[ T_l=B_l^\sharp(\Lambda^*)/\Lambda\subset T=\t/\Lambda.\]
The elements $t_\lambda$ defined in \eqref{eq:tlambda} lie in $T_{k+\cox}$; conversely, any regular element in
$T_{k+\cox}$ is $W$-conjugate to a unique $t_\lambda$.  
\end{remark}

For any dominant weight $\mu\in \Lambda^*_+$, let $\chi_\mu\in R(G)$ denote the character of the irreducible 
representation of highest weight $\mu$. Given a level $k$ weight $\mu\in\Lambda^*_k$, we denote by 
$\tau_\mu\in R_k(G)$ the image of $\chi_\mu$ under the quotient map $R(G)\to R_k(G)$. 
Let $J\in R(T)$ be the Weyl denominator: 
\begin{equation}\label{eq:weyl}
J(t)=\sum_{w\in W} (-1)^{l(w)} t^{w\rho}.
\end{equation}

The $\chi_\mu,\ \mu\in\lambda^*_+$  form an additive basis of $R(G)$. Similarly:
\begin{proposition}\label{prop:orth}
The elements $\tau_\mu,\ \mu\in\Lambda^*_k$ form a $\Z$-basis of the level $k$ fusion ring 
$R_k(G)$. (In particular, $R_0(G)=\Z$.) One has the orthogonality relations 
\[ \begin{split}
\sum_{\lambda\in\Lambda^*_k} |J(t_\lambda)|^2
\tau_\mu(t_\lambda)\tau_{\mu'}^*(t_\lambda)&=|T_{k+\cox}|\
\delta_{\mu,\mu'},\ \ \ \ \mu,\mu'\in \Lambda^*_k,\\
\sum_{\mu\in\Lambda^*_k} |J(t_\lambda)|^2
\tau_\mu(t_\lambda)\tau_{\mu}^*(t_{\lambda'})&=|T_{k+\cox}|\
\delta_{\lambda,\lambda'},\ \ \ \ \lambda,\lambda'\in \Lambda^*_k.
\end{split}\]
\end{proposition}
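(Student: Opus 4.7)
The plan is to use the Weyl character formula $\chi_\mu(t) = J_{\mu+\rho}(t)/J(t)$ (valid for regular $t$), together with discrete Fourier analysis on the finite abelian group $T_{k+\cox}$, to prove all three statements in a single package. Throughout, I write $J_\nu(t) = \sum_{w \in W}(-1)^{l(w)} t^{w\nu}$.

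Two preliminary observations are needed. First, the map $\Lambda^*_k \to T_{k+\cox}^{\reg}/W$, $\lambda \mapsto [t_\lambda]$, is a bijection: this follows because $\Alc$ (equivalently $\Alc^*_{k+\cox}$ via $B_{k+\cox}^\flat$) is a fundamental domain for the affine Weyl group $\Waff_{k+\cox} = W \ltimes B_{k+\cox}^\flat(\Lambda)$, and $\mu+\rho$ lies in its interior for every $\mu \in \Lambda^*_k$. Second, since $\lambda+\rho \in \Lambda^*$ pairs integrally with $\Lambda$, one has $t_\lambda^{B_{k+\cox}^\flat(\alpha)} = 1$ for all $\alpha \in \Lambda$; combined with the $W$-anti-invariance of $J_\nu$ and the standard pairing-up argument (terms $w$ and $sw$ cancel on $W$-singular $t$), this shows that $\nu \mapsto J_\nu(t_\lambda)$ on $\Lambda^*$ is anti-invariant under the dot action of $\Waff_{k+\cox}$ and vanishes whenever $\nu$ lies on a wall.

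The first orthogonality now follows easily. Using the Weyl character formula and $W$-invariance of $J_{\mu+\rho}\overline{J_{\mu'+\rho}}$,
\[ |W| \sum_{\lambda \in \Lambda^*_k} |J(t_\lambda)|^2 \tau_\mu(t_\lambda)\tau_{\mu'}^*(t_\lambda) = \sum_{t \in T_{k+\cox}} J_{\mu+\rho}(t)\,\overline{J_{\mu'+\rho}(t)}. \]
Expanding the right-hand side as a double sum over $W \times W$ and applying the elementary orthogonality $\sum_{t \in T_{k+\cox}} t^\xi = |T_{k+\cox}|$ when $\xi \in B_{k+\cox}^\flat(\Lambda)$ and $0$ otherwise, one is forced into pairs $(w,w')$ with $w(\mu+\rho) \equiv w'(\mu'+\rho)$ modulo $B_{k+\cox}^\flat(\Lambda)$. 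Since both $\mu+\rho$ and $\mu'+\rho$ lie in the open fundamental alcove for $\Waff_{k+\cox}$, this forces $w = w'$ and $\mu = \mu'$, yielding $|W| \cdot |T_{k+\cox}| \cdot \delta_{\mu,\mu'}$ after restoring the sign. The second orthogonality is then formal: the first relation says the square matrix $U_{\mu,\lambda} = |J(t_\lambda)|\tau_\mu(t_\lambda)/\sqrt{|T_{k+\cox}|}$ satisfies $UU^* = I$, so also $U^*U = I$.

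For the $\Z$-basis claim, linear independence is immediate from the first orthogonality (the evaluation matrix is nonsingular). For spanning, I use an affine-Weyl reduction: given $\nu \in \Lambda^*_+$, the Weyl formula together with the $\Waff_{k+\cox}$-equivariance of $J_{\nu+\rho}(t_\lambda)$ shows that either $\nu+\rho$ lies on a wall (so $\chi_\nu(t_\lambda) = 0$ for all $\lambda$ and thus $\chi_\nu \in I_k(G)$) or there is a unique $(\mu,w) \in \Lambda^*_k \times \Waff_{k+\cox}$ with $w \cdot (\nu+\rho) = \mu+\rho$, yielding $\chi_\nu \equiv (-1)^{l(w)} \chi_\mu \pmod{I_k(G)}$. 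The main obstacle throughout is the alcove book-keeping: verifying cleanly that $\Lambda^*_k + \rho \subset \on{int} \Alc^*_{k+\cox}$, that this set is a fundamental domain for $\Waff_{k+\cox}$, and that the $J_\nu(t_\lambda)$ really transform by sign under the dot action.
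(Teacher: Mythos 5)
Your proof is correct and follows precisely the route the paper itself indicates: the Weyl character formula combined with finite Fourier analysis on $T_{k+\cox}$; the paper delegates the details to its references \cite{be:co,al:fi}. The alcove bookkeeping you flagged checks out (the map $\nu\mapsto J_\nu(t_\lambda)$ really is anti-invariant under $\Waff_{k+\cox}$ and vanishes on walls, so singular elements of $T_{k+\cox}$ drop out and interior points of the alcove have trivial stabilizer), and your unitary-matrix reduction of the second orthogonality to the first is a clean shortcut.
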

These formula are consequences of the Weyl character formula
and `finite Fourier transform'. See e.g. \cite{be:co,al:fi}. Using the orthogonality relations, any 
$\tau\in R_k(G)$ may be recovered from the values $\tau(t_\lambda)$ as 
$\tau=\sum_{\lambda\in\Lambda^*_k} N(\mu)\tau_\mu$ 
where 
\[ N(\mu)=\f{1}{|T_{k+\cox}|}\sum_{\lambda\in\Lambda^*_k} |J(t_\lambda)|^2\,\tau(t_\lambda)\tau^*_\mu(t_\lambda).
 \]

The quotient map $\pi\colon R(G)\to R_k(G)$ has the following description in the basis. 
Let $\Waff=\Lambda\rtimes W$ be the affine Weyl group. Its shifted 
action at level $k$ on $\t^*$, denoted $\mu\mapsto w\bullet_k \mu$, 
is generated by reflections across the affine hyperplanes
\[ H_{\alpha,m}^{(k)}=\{\mu\in\t^*|\ \l\alpha,\  B_{k+\cox}^\sharp(\mu+\rho)\r=m\} \]
for roots $\alpha$ and integers $m\in\Z$. 
%boundary hyperplanes of $B_{k+\cox}^\flat(\Alc)-\rho$. 
That is, if $w\in W\subset \Waff$ then $w\bullet_k\mu =w(\mu+\rho)-\rho$,
while for $w=\xi\in\Lambda$ we have $w\bullet_k \mu=\mu-B_{k+\cox}(\xi)$. In terms of this action, 
\[ \pi(\chi_\mu)=\begin{cases}(-1)^{l(w)}\tau_{w\bullet_k\mu}
&\mbox{ if }\ w\bullet_k\mu\in \Lambda^*_k,
\\
0&\mbox{ if }\ \exists w\not=1\colon w\bullet_k\mu=\mu. 
\end{cases}\]
The two cases are exclusive, i.e.~ $\Waff\bullet\Lambda^*_k$ is precisely the set of 
weights whose stabilizer under the shifted action at level $k$ is trivial. 
\end{appendix}

\def\cprime{$'$} \def\polhk#1{\setbox0=\hbox{#1}{\ooalign{\hidewidth
  \lower1.5ex\hbox{`}\hidewidth\crcr\unhbox0}}} \def\cprime{$'$}
  \def\cprime{$'$} \def\polhk#1{\setbox0=\hbox{#1}{\ooalign{\hidewidth
  \lower1.5ex\hbox{`}\hidewidth\crcr\unhbox0}}} \def\cprime{$'$}
  \def\cprime{$'$}
\providecommand{\bysame}{\leavevmode\hbox to3em{\hrulefill}\thinspace}
\providecommand{\MR}{\relax\ifhmode\unskip\space\fi MR }
% \MRhref is called by the amsart/book/proc definition of \MR.
\providecommand{\MRhref}[2]{%
  \href{http://www.ams.org/mathscinet-getitem?mr=#1}{#2}
}
\providecommand{\href}[2]{#2}

\vskip1in \end{document}